\theoremstyle{plain}
\newtheorem{theorem}{Theorem}[section]
\newtheorem{lemma}[theorem]{Lemma}
\newtheorem{corollary}[theorem]{Corollary}
\newtheorem{proposition}[theorem]{Proposition}
\newtheorem{conjecture}[theorem]{Conjecture}
\newtheorem{example}[theorem]{Example}
\newtheorem{remark}[theorem]{Remark}
\DeclareMathOperator{\Gal}{Gal}
\DeclareMathOperator{\Hom}{Hom}
\DeclareMathOperator{\im}{im}
\newcommand{\CC}{\mathbb{C}}
\newcommand{\GG}{\mathbb{G}}
\newcommand{\QQ}{\mathbb{Q}}
\newcommand{\Q}{\mathbb{Q}}
\newcommand{\RR}{\mathbb{R}}
\newcommand{\ZZ}{\mathbb{Z}}
\newcommand{\calF}{\mathcal{F}}
\newcommand{\G}{\mathcal{G}}
\def\bigcapp{\raise1ex\hbox{\rotatebox{180}{$\biguplus$}}}
 \def\bigcappd{\raise1ex\hbox{\rotatebox{180}{$\displaystyle\biguplus$}}}
\begin{document}

\title[]{On circular distributions and  \\ a conjecture of Coleman}

\author{David Burns and Soogil Seo}

\begin{abstract} We investigate a conjecture of Robert Coleman concerning the module of circular distributions.
\end{abstract}

\address{King's College London,
Department of Mathematics,
London WC2R 2LS,
U.K.}
\email{david.burns@kcl.ac.uk}

\address{Department of Mathematics,
Yonsei University,
Seoul,
Korea.}
\email{soogil.seo@yonsei.ac.kr}

\thanks{2000 {\em Mathematics Subject Classification.} Primary: 11R42; Secondary: 11R27.}


\maketitle

%

\section{Introduction}\label{intro} In the Introduction to \cite{kl} Kubert and Lang describe how the general theory of distributions has a prominent role in number theory research and is strongly influenced by the classical theory of cyclotomic numbers in abelian fields. A similar point is also made by Washington in \cite[Chap. 12]{wash}.  

This article is concerned with an important class of such distributions. To give some details we fix an algebraic closure $\QQ^c$ of $\QQ$ and for each natural number $m$ write $\mu_m$ for the group of $m$-th roots of unity in $\QQ^c$ and set $\mu_m^* := \mu_m\setminus \{1\}$. We also write $\mu_\infty$ for the union of $\mu_m$ over all $m$, set $\mu_\infty^* := \mu_\infty\setminus \{1\}$ and write $\mathcal{F}$ for the multiplicative group comprising functions from $\mu_\infty^*$ to $\QQ^{c,\times}$ that respect the natural action of $\Gal(\QQ^c/\QQ)$.

Then, in the 1980s Coleman defined a `circular distribution', or `distribution' for short in the sequel, to be a function $f$ in $\calF$  that satisfies the relation
\begin{equation}\label{defining prop} \prod_{\zeta^a = \varepsilon} f(\zeta) = f(\varepsilon)\end{equation}
for all natural numbers $a$ and all $\varepsilon$ in $\mu_\infty^*$. (A similar notion was also subsequently introduced by Coates in \cite{coates} in the context of abelian extensions of imaginary quadratic fields.)

We write $\calF^{\rm d}$ for the subgroup of $\calF$ comprising all distributions and further define the group of `strict distributions' to be the subgroup $\mathcal{F}^{\rm sd}$ of $\mathcal{F}^{\rm d}$ comprising distributions that satisfy the congruence relation
\begin{equation}\label{defining prop2} f(\varepsilon\cdot \zeta) \equiv f(\zeta) \,\,\text{ modulo all primes above $\ell$}\end{equation}
for all natural numbers $n$, all primes $\ell$ that are coprime to $n$, all $\varepsilon$ in $\mu_\ell$ and all $\zeta$ in $\mu_n^*$.

It is clear that each of the groups $\mathcal{F}, \mathcal{F}^{\rm d}$ and $\mathcal{F}^{\rm sd}$ is naturally a module over the ring
\[ R := \varprojlim_n \ZZ[\Gal(\QQ(n)/\QQ)],\]
where we write $\QQ(n)$ for the field $\QQ(\mu_n)$ and the transition morphisms in the inverse limit are induced by the natural restriction maps.

As far as explicit examples are concerned, the function $\Phi$ in $\mathcal{F}$ that is defined by setting
\[ \Phi(\zeta) := 1-\zeta\, \, \, \text{ for all $\zeta$ in $\mu_\infty^*$}\]
belongs to $\mathcal{F}^{\rm sd}$. We shall write $\mathcal{F}^{\rm c}$ for the $R$-submodule of $\mathcal{F}^{\rm sd}$ generated by $\Phi$ and refer to elements of $\mathcal{F}^{\rm c}$ as `cyclotomic distributions'.

In addition, Coleman observed that for any non-empty set $\Pi$ of odd prime numbers the function $\delta_\Pi$ in $\mathcal{F}$ with
\[ \delta_\Pi(\zeta) := \begin{cases} -1, &\text{if the order of $\zeta$ is divisible only by primes that belong to $\Pi$,}\\
                                     1, &\text{otherwise}\end{cases}\]
belongs to $\mathcal{F}^{\rm d}$ and, in addition, that the only such function belonging to $\mathcal{F}^{\rm sd}$ is $\delta_{\rm odd} :=
\delta_{\Pi_{\rm odd}}$ where $\Pi_{\rm odd}$ denotes the set of all odd primes. We shall write $\mathcal{D}$ for the $R$-submodule of $\mathcal{F}^{\rm d}$ generated by the set of all functions of the form $\delta_\Pi$ and refer to these functions as `Coleman distributions'.

Then, in 1989, Coleman was led to make the following remarkable conjecture.


\begin{conjecture}[{Coleman}]\label{coleman conj} $\mathcal{F}^{\rm d} = \mathcal{D} + \mathcal{F}^{\rm c}.$ \end{conjecture}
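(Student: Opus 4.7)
The plan is to decompose any $f \in \mathcal{F}^{\rm d}$ in two stages: first, absorb the \emph{cyclotomic part} of $f$ by dividing out a suitable element of $\mathcal{F}^{\rm c}$, then show that the multiplicative residue is forced to be a Coleman distribution. The basic motivation is that the $R$-orbit of $\Phi$ should control the cyclotomic \emph{skeleton} (the system of valuations at primes above the conductor) of any potential distribution, because the values $1-\zeta$ have prescribed local behaviour almost everywhere, while the two building blocks $\mathcal{F}^{\rm c}$ and $\mathcal{D}$ contribute essentially disjoint data: valuations on the one side, and global signs on the other.

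First I would carry out the reduction to the unit case. For each $n$ and each prime $\lambda$ of $\QQ(n)$ I would track the valuation $v_\lambda(f(\zeta_n))$ using the iterated form of \eqref{defining prop}; these valuations themselves satisfy norm-compatible relations along the cyclotomic tower and are determined by finitely much local data at each ramified prime $\ell$. The task is then to match this local data with an explicit element of $R \cdot \Phi$, producing $g \in \mathcal{F}^{\rm c}$ such that $f/g$ has trivial valuation at every $\lambda$ unramified in $\QQ(n)$; equivalently, $f/g$ takes global-unit values at every level.

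The main obstacle will be the next step: showing that a unit-valued distribution is $\{\pm 1\}$-valued (modulo a further cyclotomic correction). Heuristically, the only norm-compatible units under every power map should be cyclotomic units and roots of unity, and no further contribution can survive the rigidity of \eqref{defining prop}. I expect this to require genuine Iwasawa-theoretic input -- for instance, an analysis of the tower of global units modulo the cyclotomic units via the Main Conjecture, or a Kolyvagin-style derivative argument applied directly to $f$ to kill any non-trivial class-group contribution. This is where the true difficulty of Coleman's conjecture lies, and where the strict subcase cut out by \eqref{defining prop2} is likely to be most helpful, since the congruence condition eliminates much of the flexibility in the unit tower.

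Once $f$ is known to be $\{\pm 1\}$-valued, the remainder is combinatorial. Galois-equivariance ensures that the sign $f(\zeta)$ depends only on $n = \mathrm{ord}(\zeta)$, and \eqref{defining prop} then forces the assignment $n \mapsto f(\zeta_n)$ to be determined entirely by $\mathrm{supp}(n)$, the set of prime divisors of $n$. A direct enumeration at prime-power orders shows that the admissible sign patterns are exactly those of the form $\delta_\Pi$ for some set $\Pi$ of odd primes, completing the argument modulo the difficult middle step.
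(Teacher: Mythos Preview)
The statement you are attempting to prove is a \emph{conjecture}, not a theorem, and the paper does not prove it. The entire article is devoted to developing techniques that yield partial evidence and equivalent reformulations (Theorems~\ref{main result}, \ref{new criterion}, \ref{image result}), but nowhere does it claim to establish Conjecture~\ref{coleman conj} itself.

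Your proposal is candid about this: you explicitly identify the ``difficult middle step'' --- showing that a unit-valued distribution is $\{\pm 1\}$-valued modulo a further cyclotomic correction --- and leave it unresolved, writing only that you ``expect this to require genuine Iwasawa-theoretic input.'' This is precisely where the conjecture is hard, and no known argument fills this gap. The paper's Theorem~\ref{first step} and Proposition~\ref{bijection lemma} already push the available Iwasawa-theoretic machinery (Greither's results on $E^\infty/C^\infty$, Euler systems in Rubin's sense) essentially as far as current methods allow; the outcome is not a proof of the conjecture but the embedding $\kappa: \mathcal{F}^{\rm d}_{\rm tf} \to \widehat{R}(1+\tau)$ of Theorem~\ref{main result}, which shows the conjecture holds ``everywhere locally'' while leaving the global statement open. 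Your hope that the Main Conjecture or a Kolyvagin-derivative argument would finish the job is exactly what these results exploit, and it does not suffice.

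Your outline of the final step --- that a $\{\pm 1\}$-valued distribution must be a Coleman distribution $\delta_\Pi$ --- is correct and is essentially what the paper records in Remark~\ref{more concept}; likewise your valuation-tracking first step is in the spirit of the containment~(\ref{unit prop}) and the reduction in \S\ref{semi-local}. But without the middle step there is no proof, and neither you nor the paper supplies one.
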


This conjecture was  motivated by the archimedean characterization of cyclotomic units that Coleman had earlier given in \cite{coleman2} and hence to attempts to understand a globalized version of the fact that all norm-compatible families of units in towers of local cyclotomic fields arise by evaluating a `Coleman power series' at roots of unity, as had been proved in \cite{coleman}.

The second author was subsequently able to show in \cite{Seo3} and \cite{Seo4} both that $\mathcal{D}$ is equal to the full torsion subgroup of $\mathcal{F}^{\rm d}$, and hence that the torsion subgroup of $\mathcal{F}^{\rm sd}$ is generated by the single Coleman distribution $\delta_{\rm odd}$, and also that $\mathcal{F}^{\rm c}$ is torsion-free.

This meant, in particular, that Coleman's Conjecture was valid if and only if the inclusion $\mathcal{F}^{\rm c} \subseteq \mathcal{F}^{\rm d}$ induces an identification
\begin{equation}\label{corrected coleman} \mathcal{F}^{\rm c} = \mathcal{F}^{\rm d}_{\rm tf} \end{equation}
where we write $\mathcal{F}_{\rm tf}^{\rm d}$ for the quotient of $\mathcal{F}^{\rm d}$ by its torsion subgroup $\mathcal{D}$.


However, this problem has been found to be difficult and all existing results in this direction (see, for example, work of the second author in \cite{Seo1, Seo2, Seo3} and the related results of Saikia in \cite{saikia}) have in effect only considered the much weaker question of whether, for any given distribution $f$ and any given $\zeta$ in $\mu_\infty^*$ there exists an element $r_{f,\zeta}$ of $R$ such that $f(\zeta) = \Phi(\zeta)^{r_{f,\zeta}}$?

By contrast, we shall now develop techniques that allow a systematic investigation of the conjectural equality (\ref{corrected coleman}) itself.  

To give an example of the sort of results that we are able to prove we write $\widehat{R}$ for the profinite completion of $R$. We also denote complex conjugation by $\tau$ and regard it as an element of $R$ in the obvious way.

We write $\widehat{\ZZ}(1)$ for the inverse limit of the groups $\mu_m$ with respect to the transition morphisms $\mu_m\to \mu_{m'}$ for each divisor $m'$ of $m$ that are given by raising to the power $m/m'$. We observe that $\widehat{\ZZ}(1)$ is naturally an $R$-module.

Then the following result will be proved in \S\ref{proof section}. 

\begin{theorem}\label{main result} There exists a canonical commutative diagram of $R$-modules of the form
\begin{equation*}
\xymatrix{ \widehat{\ZZ}(1) \ar@{^{(}->}[r] & \calF_{\rm tf}^{\rm d} \ar^{\!\!\!\!\!\!\!\kappa}[r] &\widehat R (1+\tau)\\
 \widehat{\ZZ}(1)\ar@{=}[u] \ar@{^{(}->}[r] & \calF^{\rm c} \ar@{->>}[r] \ar@{^{(}->}[u] & R(1+\tau). \ar@{^{(}->}[u]}
\end{equation*}
\end{theorem}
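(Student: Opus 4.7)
The plan is to construct each map of the diagram and then verify commutativity.

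I begin with the bottom row. Since $\calF^{\rm c} = R\cdot\Phi$ is cyclic, a natural candidate for the surjection $\pi\colon \calF^{\rm c}\twoheadrightarrow R(1+\tau)$ is given by sending $\Phi\mapsto 1+\tau$; the critical verification is well-definedness, i.e.\ $\Ann_R(\Phi)\subseteq R(1-\tau) = \Ann_R(1+\tau)$. A key computation here is the identity $\bigl((1-\tau)\Phi\bigr)(\zeta) = (1-\zeta)/(1-\zeta^{-1}) = -\zeta$, which in particular shows that $(1-\tau)$ does not annihilate $\Phi$ but sends it to the ``negation'' distribution $\zeta\mapsto -\zeta$. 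The kernel of $\pi$ therefore contains the cyclic $R$-submodule $R\cdot(1-\tau)\Phi$, and an analysis of its structure using the augmentation and the cyclotomic character of $R$ applied to this generator identifies a natural sub-$R$-module with $\widehat{\ZZ}(1)$; this yields the injection $\widehat{\ZZ}(1)\hookrightarrow\calF^{\rm c}$ of the bottom row.

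For the top row, the vertical inclusion $\calF^{\rm c}\hookrightarrow\calF_{\rm tf}^{\rm d}$ is obtained by composing the natural inclusion $\calF^{\rm c}\subset\calF^{\rm d}$ with the projection modulo torsion, making use of the torsion-freeness of $\calF^{\rm c}$ established by Seo. The embedding $\widehat{\ZZ}(1)\hookrightarrow\calF_{\rm tf}^{\rm d}$ is then defined as the composite through the bottom row. To construct $\kappa$, I would exploit the fact that for any $f\in\calF^{\rm d}$ the distribution $(1+\tau)f$ takes totally positive values in each maximal real subfield $\QQ(n)^+$. Since the real cyclotomic units are generated, up to bounded torsion, by the plus-part $(1+\tau)\Phi(\zeta_n) = (1-\zeta_n)(1-\zeta_n^{-1})$ over the group ring $\ZZ[\Gal(\QQ(n)/\QQ)]$, the value $\bigl((1+\tau)f\bigr)(\zeta_n)$ can be written as $\Phi(\zeta_n)^{r_n(1+\tau)}$ for some $r_n\in\ZZ[\Gal(\QQ(n)/\QQ)]$ that is well-defined modulo a suitable annihilator; the distribution relation forces the sequence $(r_n(1+\tau))_n$ to be compatible under restriction and so to define an element $\kappa(f)\in\widehat{R}(1+\tau)$.

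The main obstacle is the precise construction of $\kappa$: the indeterminacies in $r_n$ at each level must fit together consistently in the profinite completion $\widehat{R}$. Making this precise requires a careful Iwasawa-theoretic analysis of the annihilator of $\Phi(\zeta_n)^{1+\tau}$ in $\ZZ[\Gal(\QQ(n)/\QQ)]$ and its compatibility under the restriction maps, and verifying this is expected to be the central technical ingredient. Once $\kappa$ is shown to be well-defined, commutativity of the diagram is formal: on $\calF^{\rm c}$ the map $\kappa$ tautologically sends $r\Phi$ to $r(1+\tau)\in R(1+\tau)\subseteq\widehat{R}(1+\tau)$ by construction, and the remaining vertical maps are either the identity on $\widehat{\ZZ}(1)$ or the natural completion $R(1+\tau)\hookrightarrow\widehat{R}(1+\tau)$.
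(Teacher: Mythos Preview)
Your outline has the right skeleton but contains a genuine gap at the step you yourself flag as ``the main obstacle,'' and a second gap you do not flag at all.

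The unflagged gap is the assertion that $\bigl((1+\tau)f\bigr)(\zeta_n)$ can be written as $\Phi(\zeta_n)^{r_n(1+\tau)}$ with $r_n\in\ZZ[\Gal(\QQ(n)/\QQ)]$. This is \emph{not} a general fact about real cyclotomic units: it says that the value of an arbitrary distribution already lies in the cyclotomic module, which is essentially what Coleman's conjecture predicts. The paper devotes all of \S3 (Theorem~\ref{first step}) to proving the weaker statement that $f(n)=\Phi(n)^{r_{f,n}}$ with $r_{f,n}\in\ZZ_{(n)}\otimes_\ZZ R_n$, i.e.\ only after inverting all primes \emph{not} dividing $n$. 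The argument uses Euler-system machinery together with Greither's Iwasawa-theoretic result on $E(m)^\infty_p/C(m)^\infty_p$ (Proposition~\ref{bijection lemma}); it is the technical heart of the paper, and your proposal treats it as a triviality about generators of cyclotomic units.

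For the flagged gap---making the $r_n$ cohere in $\widehat R$---the paper's resolution is not a direct compatibility check at each level but rather a prime-by-prime decomposition $\widehat R=\prod_p\widehat R^p$. For a fixed $p$ one restricts to $n\in\mathbb{N}(p)$ and works in $V(n)_p$; the point of Theorem~\ref{first step} is precisely that $r_{f,n}$ becomes $p$-integral there. The uniqueness of the resulting $r_{f,p}\in\widehat R^p(1+\tau)$ then comes from Proposition~\ref{technical 2}(i), whose proof hinges on Lemma~\ref{derived proj limit lemma}: the annihilator ideals $I_{mp^b,p}$ have vanishing inverse limit as $b\to\infty$, so the indeterminacy disappears in the $p$-adic Iwasawa tower. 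Your proposal gestures at ``Iwasawa-theoretic analysis of the annihilator'' but does not isolate either the localisation step or this vanishing-of-limits mechanism, and without Theorem~\ref{first step} there is no $p$-integral $r_n$ to feed into it in the first place.
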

%

%

\noindent{}(The two vertical arrows in this diagram are the natural inclusions and all other maps will be defined explicitly in the course of the proof.) 

This result is of interest for several reasons.

Firstly, for example, it reduces the study of $\calF_{\rm tf}^{\rm d}$ to the study of $(1+\tau)\calF_{\rm tf}^{\rm d}$ and identifies this module with a submodule of the profinite completion of $(1+\tau)\calF^{\rm c}$, thereby showing that Coleman's Conjecture is, in a natural sense, true `everywhere locally'.

Secondly, Theorem \ref{main result} will allow us to establish (in Theorem \ref{new criterion}) an explicit `$p$-adic' criterion for a distribution to be cyclotomic that is very reminiscent of the `archimedean boundedness' criterion used in \cite{coleman2} to characterise cyclotomic units (and which itself motivated Coleman's study of circular distributions).

Thirdly, our approach gives an explicit description of the image of the map $\kappa$ in Theorem \ref{main result}, and hence of the quotient group $Q:= \mathcal{F}^{{\rm d}}_{\rm tf}/\mathcal{F}^{{\rm c}}$, and thereby leads (in Theorem \ref{image result} and Proposition \ref{image result 2}) to an explicit condition concerning the Galois structure of the Selmer groups of $\mathbb{G}_m$ that is equivalent to the validity of Coleman's conjectural equality (\ref{corrected coleman}).

At the same time, it also allows us to show for every prime $p$ that $Q$ identifies with a submodule of a canonical uniquely $p$-divisible group and, in addition, to give an explicit criterion for the image in $Q$ of a distribution to be $p$-divisible.  

Further, it seems to us possible  that further development of the methods introduced here may allow one to prove that $Q$ is itself uniquely divisible, and for all practical purposes concerning applications to the theory of Euler systems this would be enough (see the discussion in \S\ref{final sec}).

In the course of proving Theorem \ref{main result} we shall also establish several auxiliary results that are perhaps themselves of interest.

Such results include an unconditional proof of the main result of the second author in \cite{Seo1} that in loc. cit. is only proved modulo the validity of Greenberg's conjecture on the ideal class groups of real abelian fields (see Theorem \ref{first step} and Remark \ref{soogil rem}), 
 a complete characterization of torsion-valued distributions (see Theorem \ref{reduction result}) and a more conceptual proof of the fact that $\mathcal{D}$ is equal to the torsion subgroup of $\mathcal{F}^{\rm d}$, as discussed above (see Remark \ref{more concept}).


Finally, we note that the short exact sequence in the lower row of the diagram in Theorem \ref{main result} splits after inverting $2$, but not otherwise. 

In fact, an investigation of the submodule $\mathcal{F}^{{\rm d},\tau =1}$ of $\mathcal{F}^{\rm d}$ comprising distributions fixed by the action of $\tau$ plays an important role in our approach and, in particular, the difficulty of explicitly characterizing $(1+\tau)\mathcal{F}^{\rm d}$ as a submodule of $\mathcal{F}^{{\rm d},\tau =1}$ often causes extra complications when considering $2$-primary aspects of problems (the quotient group $\mathcal{F}^{{\rm d},\tau =1}/(1+\tau)\mathcal{F}^{\rm d}$ is a vector space over the field of two elements of uncountably infinite dimension - see Remark \ref{tate coh rem}). We are able to resolve many, but not all, of these $2$-primary issues and, as a consequence, some of our results are slightly less complete than we would have liked.

In a little more detail, the main contents of this article is as follows. In \S\ref{basic prop sec} we recall some basic facts concerning  distributions, introduce various useful notions of `partial distribution', prove several technical results concerning distributions with totally positive values that are important for later arguments and discuss various explicit examples that give some idea of the difficulty of working with  distributions. In \S\ref{circ dist} we recall the well-known link between distributions and Euler systems and combine this with results of Greither to establish (in Theorem \ref{first step}) a concrete link between the values of distributions in the groups $\mathcal{F}^{\rm d}$ and $\mathcal{F}^{\rm c}$. In \S\ref{torsion section} we make a detailed study of distributions that are valued in roots of unity and prove (in Theorem
 \ref{reduction result}) an important reduction step in the proof of Theorem \ref{main result}. In \S\ref{pd section} we prove (in Theorem \ref{key real}) several key properties of distributions `of prime level' and then, in \S\ref{proofs section}, we combine the results of \S\ref{torsion section} and \S\ref{pd section} to prove Theorem \ref{main result} and then also justify the various remarks that are made after the above statement of this result. Finally, in \S\ref{divisible section} we shall use Theorem \ref{main result} to give an explicit criterion for the image of a distribution in $\mathcal{F}^{\rm d}_{\rm tf}/\mathcal{F}^{\rm c}$ to be $p$-divisible and also show that `divisible' distributions have many of the same properties as cyclotomic distributions.

Throughout the article we shall usually use exponential notation to indicate the action of a commutative ring $\Lambda$ on a multiplicative group $U$, so that the image of an element $u$ of $U$ under the action of an element $\lambda$ of $\Lambda$ is written as $u^\lambda$. However we caution the reader that, for typographic simplicity, we shall also occasionally write either $\lambda(u)$ or $\lambda\cdot u$ in place of $u^\lambda$.
\\

\section{Basic properties of distributions}\label{basic prop sec}

In this section we shall first recall some well-known properties of distributions. We next establish several further properties that will be useful  in later arguments and then end by discussing various explicit examples that help clarify the theory.

\subsection{}We first introduce notation and review some basic facts concerning distributions (that are in the main due to Coleman).

\subsubsection{}In the sequel we set $\mathbb{N}^\ast := \mathbb{N}\setminus \{1\}$. 

For each $n$ in $\mathbb{N}^*$ we fix a generator $\zeta_n$ of $\mu_n$ such that $\zeta_{mn}^m = \zeta_n$ for all $m$ and $n$ and for each function $f$ in $\mathcal{F}$ we set
\[ f(n) := f(\zeta_n).\]

Having made such a choice of elements of $\mu_\infty^*$, the Galois equivariance of functions in $\mathcal{F}^{\rm d}$ allows us to identify each such $f$ with a set of the form $\{f(n)\}_{n \in \mathbb{N}^*}$, where each $f(n)$ belongs to $\QQ(n)^\times$ and, taken together, these elements satisfy suitable relations.

Before stating these relations we introduce some further notation. We set
\[ G_n := \Gal(\QQ(n)/\QQ)\,\,\text{ and }\,\, R_n := \ZZ[G_n].\]
For each multiple $m$ of $n$ we then write $G^m_n$ for the subgroup  $\Gal(\QQ(m)/\QQ(n))$ of $G_m$ and $\pi^m_n$ for the ring homomorphism $R_m \to R_n$ induced by the natural projection $G_m\to G_n$. We also write ${\rm N}^m_n$ for the field-theoretic norm map $\QQ(m)^\times \to \QQ(n)^\times$.

We write $\tau$ for the element of $G_n$ induced by complex conjugation and note that the maximal totally real subfield $\QQ(n)^+$ of $\QQ(n)$ is equal to the set of elements fixed by $\tau$.

For each prime $\ell$ we fix an element $\sigma_\ell$ of $\Gal(\QQ^c/\QQ)$ that restricts to give the identify automorphism on $\QQ(\ell^a)$ for all natural numbers $a$ and to give the {\em inverse} of the Frobenius element at $\ell$ on $\Q(m)$ for every natural number $m$ that is prime to $\ell$.

Then a straightforward exercise shows that the defining property (\ref{defining prop}) of $f$ is equivalent to requiring that for each natural number $m$ and prime $\ell$ one has

\begin{equation}\label{basic facts} {\rm N}^{m\ell}_m(f(m\ell)) = \begin{cases} f(m), &\text{ if $\ell$ divides $m$,}\\
f(m)^{1-\sigma_\ell}, &\text{ if $\ell$ is prime to $m$.}\end{cases}\end{equation}

We write $E(n)$ and $E^p(n)$ for each prime $p$ for the group of global units, respectively global $p$-units, in $\QQ(n)$ and set

\[ E(n)' := \begin{cases} E^p(n), &\text{ if $n$ is a power of a prime $p$,}\\
                          E(n), &\text{ otherwise.}\end{cases}\]

Then the first distribution relation in (\ref{basic facts}) implies that for each $n$ in $\mathbb{N}^*$ one has
\begin{equation}\label{unit prop} f(n) \in E(n)' \end{equation}
(for details see, for example, \cite[Lem. 2.2]{Seo1}).

%

%
%

\subsubsection{}\label{distributions of level} For any subset $\Sigma$ of $\mathbb{N}^*$ we set $\mu^*_\Sigma := \bigcup_{m \in \Sigma}\mu^*_m$ and write $\mathcal{F}^{\rm d}_\Sigma$ for the multiplicative group of Galois equivariant maps from $\mu^*_\Sigma$ to $\QQ^{c,\times}$ that satisfy the relation (\ref{defining prop}) for all $\varepsilon$ in $\mu_\Sigma^*$ and all natural numbers $a$ for which there exists an element $\zeta$ of $\mu^*_\Sigma$ such that $\zeta^a = \varepsilon$.

We then write $\mathcal{F}^{\rm sd}_\Sigma$ for the subgroup of $\mathcal{F}^{\rm d}_\Sigma$ comprising functions that also satisfy the congruence relation (\ref{defining prop2}) for all coprime $n$ and $\ell$ for which $n$ and $n\cdot \ell$ both belong to $\Sigma$.

It is clear that each such group $\mathcal{F}^{\rm d}_\Sigma$ and $\mathcal{F}^{\rm sd}_\Sigma$ has a natural structure as $R$-module and we write $\iota_\Sigma$ for both of the homomorphisms of $R$-modules $\mathcal{F}^{\rm d} \to \mathcal{F}^{\rm d}_{\Sigma}$ and $\mathcal{F}^{\rm sd} \to \mathcal{F}^{\rm sd}_{\Sigma}$ that are obtained by restricting functions from $\mu_\infty^*$ to $\mu^*_\Sigma$.

In the special case that $\Sigma$ is equal to the subset $\mathbb{N}(m)$ of $\mathbb{N}^*$ comprising all multiples of a given natural number $m$, then we shall abbreviate $\mathcal{F}^{\rm d}_\Sigma$ and $\mathcal{F}^{\rm sd}_\Sigma$ to $\mathcal{F}^{\rm d}_{(m)}$ and
$\mathcal{F}^{\rm sd}_{(m)}$ and refer to functions in these sets as `distributions of level $m$' and `strict distributions of level $m$' respectively. In this case we shall also write $\iota_m$ in place of $\iota_{\Sigma}$.


\subsection{}\label{totally positive section} In this section we establish several useful results concerning distributions with totally positive values.

For each $m$ in $\mathbb{N}^*$ we write $\QQ(m)^+_{>0}$ for the multiplicative group of totally positive elements in $\QQ(m)^+$ and then define a torsion-free group by setting
\[ V(m) := E(m)'\cap \QQ(m)^+_{>0}.\]
%

For each subset $\Sigma$ of $\mathbb{N}^*$ we write $\mathcal{F}_\Sigma^{{\rm d},+}$ and $\mathcal{F}_\Sigma^{{\rm sd},+}$ for the $R$-submodules of $\mathcal{F}_\Sigma^{\rm d}$ and $\mathcal{F}_\Sigma^{\rm sd}$ comprising functions $f$ with $f(m) \in V(m)$ for all $m$ in $\Sigma$.

\begin{lemma}\label{useful 1} If $\Sigma$ is any cofinal subset of $\mathbb{N}^*$ that is closed under taking powers, then the natural restriction map $\iota_{\Sigma}: \mathcal{F}^{{\rm d},+} \to \mathcal{F}^{{\rm d},+}_\Sigma$ is injective.
\end{lemma}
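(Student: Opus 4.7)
The plan is to observe that cofinality of $\Sigma$ in the divisibility ordering on $\mathbb{N}^\ast$ already forces the set-theoretic equality $\mu^\ast_\Sigma = \mu^\ast_\infty$, which in turn reduces the injectivity of $\iota_\Sigma$ to a tautology. The proof I have in mind therefore splits naturally into two very short steps.

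First I would verify that $\mu^\ast_\Sigma = \mu^\ast_\infty$ as sets. Given any $\zeta \in \mu^\ast_\infty$, let $n \in \mathbb{N}^\ast$ denote its order. By cofinality of $\Sigma$ (with respect to divisibility), one can choose $m \in \Sigma$ with $n \mid m$, and then $\zeta \in \mu_n \subseteq \mu_m$, so $\zeta \in \mu^\ast_m \subseteq \mu^\ast_\Sigma$. The reverse inclusion is immediate from the definition of $\mu^\ast_\Sigma$.

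Next I would deduce injectivity. Suppose $f \in \calF^{{\rm d},+}$ lies in $\ker(\iota_\Sigma)$, so $f(\zeta) = 1$ for every $\zeta \in \mu^\ast_\Sigma$. By the previous step this hypothesis already says $f(\zeta) = 1$ for every $\zeta \in \mu^\ast_\infty$, and hence $f$ is the trivial element of $\calF^{{\rm d},+}$; this gives injectivity of $\iota_\Sigma$.

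The principal obstacle I anticipate is therefore one of interpretation rather than mathematics: if ``cofinal'' were intended in the weaker sense of being unbounded in the usual ordering of $\mathbb{N}^\ast$ (for instance $\Sigma = \{2^k : k \geq 1\}$), then $\mu^\ast_\Sigma$ would be a proper subset of $\mu^\ast_\infty$ and one genuinely would need the distribution relation together with the totally-positive constraint $f(m) \in V(m)$ — and in fact $\iota_\Sigma$ can fail to be injective in that weaker setting, as $\Phi^{(1+\tau)(1-\sigma_2)}$ already shows on the $2$-power levels. Under the divisibility interpretation, however, the argument above is formal; the closed-under-powers hypothesis and the positivity condition do not enter into this particular injectivity statement, and are presumably placed in the hypothesis for compatibility with the later results of \S\ref{basic prop sec} that build on this lemma.
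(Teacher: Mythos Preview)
Your observation is correct: under the paper's literal definitions, divisibility-cofinality of $\Sigma$ forces $\mu^\ast_\Sigma = \mu^\ast_\infty$, and then $\iota_\Sigma$ is restriction of a function to its full domain, hence the identity on underlying maps. Injectivity is immediate, and neither the closed-under-powers hypothesis nor the total-positivity condition is used. Your caveat about the weaker (archimedean) reading of ``cofinal'' is also well taken.

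By contrast, the paper proves the lemma by a genuinely different and much heavier argument. It fixes $m$ with $f(m)\neq 1$, chooses $n\in\Sigma\cap\mathbb{N}(m)$, and then uses the closed-under-powers hypothesis to produce, for every $a$, an $n_a\in\Sigma\cap\mathbb{N}(mp^a)$ having the same prime support as $n$; the distribution relations then give $f(mp^a)^{\prod_\ell(1-\sigma_\ell)}=1$, and a separate lemma (Lemma~\ref{key useful}) on norm-compatible families in the torsion-free groups $V(mp^a)$ forces $f(m)=1$. In effect the paper is proving the stronger (and non-tautological) statement that the tuple $(f(n))_{n\in\Sigma}$ already determines $f\in\mathcal{F}^{{\rm d},+}$, which is how the authors implicitly use the lemma throughout (they repeatedly write ``$\iota_p(f)=1$'' to mean ``$f(n)=1$ for all $n\in\mathbb{N}(p)$'' rather than ``$f$ vanishes on $\mu^\ast_{\mathbb{N}(p)}$''). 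Your proof exploits a gap between the formal definition of $\mathcal{F}^{\rm d}_\Sigma$ and its intended meaning; it is valid for the lemma as stated, but the paper's longer argument is what actually carries the subsequent applications.
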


\begin{proof} To argue by contradiction we assume $f$ is a non-trivial distribution with $\iota_\Sigma(f) = 1$. We fix $m$ in $\mathbb{N}^*$ with $f(m) \not = 1$ and a prime divisor $p$ of $m$. We then fix a natural number $n$ in $\Sigma\cap \mathbb{N}(m)$ and note that, since $\Sigma$ is closed under taking powers, for each $a\in \mathbb{N}$ there exists a natural number $n_a$ in $\Sigma\cap \mathbb{N}(mp^a)$ that has the same prime divisors as $n$.

In particular, if we write $\mathcal{P}$ for the (possibly empty) set of primes that divide $n$ but not $m$, and hence divide $n_a$ but not $mp^a$, then for each $a$ one has
\[ 1 = {\rm N}^{n_a}_{mp^a}(1) = {\rm N}^{n_a}_{mp^a}(f(n_a)) = f(mp^a)^{\prod_{\ell}(1-\sigma_\ell)}\]
where in the product $\ell$ runs over all primes in $\mathcal{P}$.

If $\mathcal{P}$ is empty, this implies $f(mp^a) = 1$ and hence also, since $p$ divides $m$, that $f(m) = {\rm N}^{mp}_m(f(mp)) = 1$, which is a contradiction.

On the other hand, if $\mathcal{P}$ is non-empty, then, since the elements $f(mp^a)$ for $a\ge 0$ form a norm-compatible family of elements of $V(mp^a)$, we obtain a contradiction to the assumption $f(m) \not= 1$ by successively applying Lemma \ref{key useful} below with $q$ taken to be each of the primes in $\mathcal{P}$.
\end{proof}

\begin{lemma}\label{key useful} Let $(x_a)_{a \ge 0}$ be a norm compatible family of elements of $V(mp^a)$ for which there exists a prime $q$ that does not divide $m$ and is such that $x_a^{1-\sigma_q} = 1$ for all $a$. Then one has $x_0 = 1$.
\end{lemma}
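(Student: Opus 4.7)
The strategy is to show that the given conditions force each $x_a$, for $a$ sufficiently large, to lie in a number field $L$ that is independent of $a$; norm-compatibility then forces $x_a$ to be infinitely $p$-divisible in a finitely generated torsion-free abelian group, hence trivial, and a final norm gives $x_0=1$.

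To set this up, write $\sigma_{q,a}$ for the restriction of $\sigma_q$ to $\QQ(mp^a)$ and put $L_a:=\QQ(mp^a)^{\langle\sigma_{q,a}\rangle}$. The hypothesis $x_a^{1-\sigma_q}=1$ places $x_a$ in $L_a$. In the setting of Lemma \ref{useful 1} one has $p\mid m$, hence $q\ne p$; writing $m=p^c m'$ with $c\ge 1$ and $p\nmid m'$, the element $\sigma_{q,a}$ corresponds under $G_{mp^a}\cong (\ZZ/m')^\times\times(\ZZ/p^{c+a})^\times$ to $(q^{-1}\bmod m',\,q^{-1}\bmod p^{c+a})$. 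Since $q$ is a unit modulo $p$, a direct computation shows that $v_p(\mathrm{ord}(\sigma_{q,a}))$ increases by exactly one each time $a$ is increased by one (for $a$ large), matching the growth of $v_p(|G_{mp^a}|)$. Consequently $[L_a:\QQ]$ is eventually constant, and combined with the obvious inclusion $L_a\subseteq L_{a+1}$ this yields $L_a=L:=\bigcup_a L_a$ for all $a\ge A$ and some threshold $A$, with $L/\QQ$ of finite degree.

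For $a\ge A$, any $g\in G^{mp^{a+1}}_{mp^a}$ fixes $\QQ(mp^a)\supseteq L=L_{a+1}$ pointwise, so lies in $\Gal(\QQ(mp^{a+1})/L)=\langle\sigma_{q,a+1}\rangle$; in particular $g$ fixes $x_{a+1}$, and hence
\[ x_a = \N^{mp^{a+1}}_{mp^a}(x_{a+1}) = x_{a+1}^{p}. \]
Iterating, $x_A=x_{A+k}^{p^k}$ in $L$ for every $k\ge 0$, and each $x_{A+k}$ lies in $V(mp^{A+k})\cap L$. One checks that this intersection coincides with the group of totally positive units (resp.\ $p$-units, according to whether $mp^{A+k}$ is a prime power) of $L^+:=L\cap\RR$: restriction of primes identifies the unit (resp.\ $p$-unit) groups, while total positivity in the ambient and in the subfield agree because real places of $L^+$ extend to real places of $\QQ(mp^{A+k})^+$. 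This intersection is therefore a finitely generated torsion-free abelian group, and infinite $p$-divisibility forces $x_A=1$; applying $\N^{mp^A}_m$ then gives $x_0=1$.

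The main technical obstacle is the stabilization $L_a=L$, which rests on the standard fact that $v_p(\mathrm{ord}(q\bmod p^n))$ increases by exactly one when $n$ increases by one, once $n$ is large. This is precisely the point at which the hypothesis that $q$ is coprime to $p$ (which is automatic here, since $p\mid m$) is essential: without it, $L_a$ would absorb the cyclotomic $\ZZ_p$-tower above $\QQ(m)$ and the infinite $p$-divisibility argument would collapse.
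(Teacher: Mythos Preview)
Your proof is correct and rests on the same idea as the paper's: the elements $x_a$ are trapped in a fixed finite-degree subfield of $\QQ(mp^\infty)$, so norm-compatibility forces infinite $p$-divisibility in a finitely generated torsion-free group. The execution differs only in packaging. Where you compute $v_p(\mathrm{ord}(\sigma_{q,a}))$ explicitly to see that $[L_a:\QQ]$ stabilises, the paper simply notes that the closure of $\langle\sigma_q\rangle$ in $\Gal(\QQ(mp^\infty)/\QQ)$ is open (since $q\ne p$), so its fixed field is contained in some $\QQ(mp^d)$; then for any $e>d$ one has directly
\[
x_0=\mathrm{N}^{mp^e}_m(x_e)=\mathrm{N}^{mp^d}_m(x_e)^{\,p^{e-d}}\in V(m)^{p^{e-d}},
\]
and $\bigcap_{e>d} V(m)^{p^{e-d}}=\{1\}$. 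This avoids the intermediate passage through $x_A$ and the identification of $V(mp^{A+k})\cap L$ with the totally positive (\,$p$-)units of $L^+$, which, while correct, is more than the argument requires.
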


\begin{proof} Since the image of $\sigma_q$ in $\Gal(\QQ(mp^\infty)/\QQ)$ generates an open subgroup, each element $x_a$ belongs to $\QQ(mp^d)$ for a fixed natural number $d$.

Thus, for each integer $e >d$ one has
\[ x_0 = {\rm N}^{mp^e}_m(x_e) = {\rm N}^{mp^d}_m(x_e)^{p^{e-d}} \in V(m)^{p^{e-d}}.\]
Given this, the triviality of $x_0$ follows from the fact that the intersection of $V(m)^{p^{e-d}}$ over all $e > d$ is trivial.
\end{proof}

\begin{lemma}\label{useful 2} For every prime $\ell$ the endomorphism of $\mathcal{F}^{{\rm d},+}$ that sends each $f$ to $f^{1-\sigma_\ell}$ is injective. \end{lemma}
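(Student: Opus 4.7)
The plan is to reduce the injectivity claim to Lemma \ref{useful 1}. Specifically, I would show that $f(n) = 1$ for every $n \in \mathbb{N}^*$ which is not a power of $\ell$. The subset $\Sigma := \mathbb{N}^* \setminus \{\ell^b : b \ge 1\}$ is cofinal in $\mathbb{N}^*$ (given $n \in \mathbb{N}^*$, the product $nq$ lies in $\Sigma$ for any prime $q \neq \ell$) and is closed under taking powers, so verifying the vanishing on $\Sigma$ would immediately give $f = 1$ via Lemma \ref{useful 1}.

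To achieve this vanishing, I would fix $n \in \Sigma$, choose a prime $p \mid n$ with $p \neq \ell$, and consider the family $x_a := f(np^a) \in V(np^a)$ for $a \geq 0$. Since $p \mid n$, the first case of (\ref{basic facts}) shows this family to be norm-compatible, and the hypothesis $f^{1-\sigma_\ell}=1$ gives $x_a^{1-\sigma_\ell}=1$, i.e.\ $\sigma_\ell$ fixes each $x_a$. One then aims to conclude $x_0 = f(n) = 1$ by exactly the strategy of Lemma \ref{key useful}, namely exploiting the $\sigma_\ell$-fixedness to show that $x_a$ lies in a fixed finite layer $\QQ(np^d)$, and then combining norm-compatibility with the torsion-freeness of $V(n)$ to force
\[x_0 \in \bigcap_{a \geq d} V(n)^{p^{a-d}} = \{1\}.\]

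The main obstacle is that Lemma \ref{key useful} is stated under the hypothesis $q \nmid m$, yet we wish to apply it with $q = \ell$ in cases where $\ell$ may divide $n$. The resolution is that the argument of Lemma \ref{key useful} actually uses only the weaker condition that $\sigma_q$ generates an open subgroup of $\Gal(\QQ(np^\infty)/\QQ)$, and this remains valid here: because $\ell \neq p$, the projection of $\sigma_\ell$ to the cyclotomic $\mathbb{Z}_p$-factor of $\Gal(\QQ(np^\infty)/\QQ)$ is the action of $\ell^{-1} \in \mathbb{Z}_p^\times$ on $\mu_{p^\infty}$, whose image in $1+p\mathbb{Z}_p$ is nontrivial (since $\ell$ is not a root of unity in $\mathbb{Z}_p$, as a prime $\ell \neq \pm 1$ has infinite order in $\mathbb{Z}_p^\times$) and therefore generates a closed subgroup of finite index. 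Granted this observation, the rest of the argument of Lemma \ref{key useful} applies verbatim and delivers the required conclusion.
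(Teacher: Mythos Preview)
Your proof is correct and follows essentially the same strategy as the paper: both arguments combine Lemma~\ref{useful 1} with Lemma~\ref{key useful}, and both ultimately rest on the fact that $\sigma_\ell$ generates an open subgroup of $\Gal(\QQ(np^\infty)/\QQ)$ whenever $p\neq\ell$. The paper runs the argument in contrapositive (fixing a single prime $p\neq\ell$ and $\Sigma=\mathbb{N}(p)$), whereas you argue directly with the larger set $\Sigma=\mathbb{N}^*\setminus\{\ell^b\}$; your explicit discussion of why the hypothesis $q\nmid m$ in Lemma~\ref{key useful} can be relaxed is a point the paper leaves implicit.
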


\begin{proof} Let $f$ be a non-trivial element of $\mathcal{F}^{{\rm d},+}$ and fix a prime $p$ different from $\ell$.

By applying Lemma \ref{useful 1} with $\Sigma = \mathbb{N}(p)$ we can fix $m$ in $\mathbb{N}(p)$ with $f(m) \not= 1$. Then Lemma \ref{key useful} implies that for some natural number $a$ one has
\[ f^{1-\sigma_\ell}(mp^a) = f(mp^a)^{1-\sigma_\ell} \not= 1.\]
This shows that the distribution $f^{1-\sigma_\ell}$ is non-trivial, as required.
\end{proof}

In the sequel we consider the quotient
\[ G_n^+ := \Gal(\QQ(n)^+/\QQ)\]
of $G_n$ and write $I_n$ for the annihilator in $\ZZ[G_n^+]$ of the element
\[ \varepsilon_n:= (1-\zeta_n)^{1+\tau} \in V(n).\]
of $V(n)$.

In the next result we shall give an explicit description of this ideal in terms of the decomposition behaviour in $\QQ(n)^+$ of prime divisors of $n$. However, before stating this result, we must introduce more notation.

We write $\#X$ for the cardinality of a finite set $X$. For a finite group $\Gamma$ we write $e_\Gamma$ for the idempotent $e_\Gamma := \#\Gamma^{-1}\cdot \sum_{\gamma \in \Gamma}\gamma$ of $\QQ[\Gamma]$.

We then obtain an idempotent of $\QQ[G_n^+]$ by setting
\begin{equation}\label{n idem} e_n := \begin{cases} e_{G_n^+} + \prod_{\ell \mid n}(1-e_{D_{n,\ell}}), &\text{if $n$ is a prime power,}\\
                        \prod_{\ell \mid n}(1-e_{D_{n,\ell}}), &\text{otherwise,}\end{cases}\end{equation}
where in the products $\ell$ runs over all prime divisors of $n$ and $D_{n,\ell}$ denotes the decomposition subgroup of $\ell$ in $G^+_n$.

For each homomorphism $\psi: G_n \to \QQ^{c,\times}$ we write $e_\psi$ for the primitive idempotent $\#G_n^{-1}\sum_{g \in G_n}\psi(g^{-1})g$ of $\QQ^c[G_n]$.

For any element $u$ of $V(n)$ we then define $e_\psi\cdot u$ by means of the inclusion $V(n) \subset \QQ^c\otimes_\ZZ V(n)$.

\begin{lemma}\label{useful 3} $I_n$ is equal to the set $\{ x\in R_n^+ \, \mid \, e_n\cdot x =0\}$.
 \end{lemma}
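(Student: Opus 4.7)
The plan is to pass to $\QQ^c \otimes_\ZZ V(n)$, which is legitimate since $V(n)$ is torsion-free by construction. Using the idempotent decomposition of $\QQ^c[G_n^+]$ indexed by characters $\psi$ of $G_n^+$, the annihilator satisfies
\[ x \in I_n \iff \psi(x) \cdot e_\psi \varepsilon_n = 0 \text{ for all } \psi \iff \psi(x) = 0 \text{ for all } \psi \text{ with } e_\psi \varepsilon_n \neq 0, \]
and by the same reasoning $e_n x = 0$ iff $\psi(x) = 0$ for every $\psi$ with $e_\psi e_n \neq 0$. Hence the lemma reduces to showing that, for every character $\psi$ of $G_n^+$,
\[ e_\psi \cdot \varepsilon_n \neq 0 \text{ in } \QQ^c \otimes V(n) \iff e_\psi \cdot e_n \neq 0. \]

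Unpacking definition (\ref{n idem}), the right-hand condition says: if $n = p^a$ is a prime power every $\psi$ qualifies (using that $D_{n,p} = G_n^+$ by total ramification in $\QQ(n)^+$); otherwise $\psi$ qualifies iff $\psi|_{D_{n,\ell}} \neq 1$ for every prime $\ell \mid n$. To match this on the left-hand side I would argue case by case. For the trivial character, $e_1 \varepsilon_n$ is a positive rational multiple of the image of $N_{\QQ(n)/\QQ}(1-\zeta_n)^2$ in $\QQ \otimes V(n)$; this norm equals $p^2$ if $n = p^a$ and $1$ otherwise, so $e_1 \varepsilon_n \neq 0$ precisely when $n$ is a prime power, matching the right-hand side.

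For a non-trivial character $\psi$ of conductor $f_\psi \mid n$, iterating the distribution relation (\ref{basic facts}) for $\Phi$ to descend from level $n$ to level $f_\psi$ yields the classical identity
\[ \sum_{g \in G_n^+}\bar\psi(g)\log|g(1-\zeta_n)|_v = \Bigl(\prod_{\ell \mid n,\,\ell \nmid f_\psi}(1-\bar\psi(\ell))\Bigr)\cdot\sum_{h \in G_{f_\psi}^+}\bar\psi(h)\log|h(1-\zeta_{f_\psi})|_v \]
at any archimedean place $v$ of $\QQ(n)^+$. Since $V(n)$ embeds into $\prod_v \RR$ via the logarithmic map, $e_\psi\varepsilon_n \neq 0$ in $\QQ^c \otimes V(n)$ is equivalent to non-vanishing of this expression. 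The inner sum is non-zero by the Dirichlet class number formula (equivalently, by $L(1,\bar\psi) \neq 0$ combined with the functional equation), so the condition becomes $\bar\psi(\ell) \neq 1$ for every $\ell \mid n$ with $\ell \nmid f_\psi$. Combining this with the automatic non-triviality of $\psi|_{D_{n,\ell}}$ when $\ell \mid f_\psi$ (where $\psi$ is ramified), and with the identification $\bar\psi(\ell) = 1 \iff \psi|_{D_{n,\ell}} = 1$ for $\ell \nmid f_\psi$ (since $D_{n,\ell}$ is then generated by the Frobenius at $\ell$), the two sides of the displayed equivalence match in both the prime-power and non-prime-power cases.

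The main obstacle is justifying the displayed $L$-value / distribution identity and confirming that the archimedean logarithmic embedding detects $e_\psi$-vanishing faithfully in $\QQ^c\otimes V(n)$; the remainder is routine bookkeeping between even characters of $G_n$ and characters of $G_n^+$.
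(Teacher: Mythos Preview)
Your proposal is correct and follows essentially the same route as the paper's proof: both reduce to the character-by-character equivalence $e_\psi\varepsilon_n\neq 0 \iff e_\psi e_n\neq 0$, treat the trivial character via the norm computation, and for non-trivial $\psi$ invoke the non-vanishing of the relevant Dirichlet $L$-value together with the Euler factor $\prod_{\ell\mid n,\,\ell\nmid f_\psi}(1-\psi(\sigma_\ell^{-1}))$. The only cosmetic difference is that the paper phrases the analytic input as $L'(\psi^{-1},0)\neq 0$ (citing Tate), whereas you use $L(1,\bar\psi)\neq 0$ and the functional equation; and the paper packages the decomposition-group bookkeeping directly via the idempotents $e_{D_{n,\ell}}$ rather than splitting into the cases $\ell\mid f_\psi$ and $\ell\nmid f_\psi$.
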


\begin{proof} An element $x$ of $\ZZ[G_n^+]$ belongs to $I_n$ if and only if $x = (1-e)\cdot x$ where the idempotent $e$ is the sum of $e_\psi$ over all homomorphisms $\psi: G_n^+ \to \QQ^{c,\times}$ for which $e_\psi\cdot \varepsilon_n \not= 0$. It is therefore enough to show that $e$ is equal to $e_n$.

If $\psi$ is the trivial homomorphism, then it is easy to check that $e_\psi\cdot \varepsilon_n \not= 0$ if and only if $n$ is a prime power.

On the other hand, if $\psi$ is not trivial, then the distribution relations (\ref{basic facts}) combine with the fundamental link  between cyclotomic elements and first derivatives of Dirichlet $L$-series (as discussed, for example, in \cite[Chap. 3, \S5]{tate}) to imply that
\[ -\frac{1}{2}\sum_{g \in G} \psi(g)^{-1}{\rm log}(\sigma_\infty(\varepsilon_n^g)) = L'(\psi^{-1},0)\prod_{\ell\in \mathcal{P}_\psi} (1-\psi(\sigma_\ell^{-1})).\]
Here $\sigma_\infty$ denotes the embedding $\QQ(n) \to \CC$ that sends $\zeta_{n}$ to $e^{2\pi i/n}$ and $\mathcal{P}_\psi$ the set of prime divisors of $n$ that do not divide the conductor of $\psi$.

In particular, since $L'(\psi^{-1},0)\not= 0$, this equality implies that $e_\psi\cdot \varepsilon_n \not= 0$ if and only if the element
$e_\psi\cdot \prod_{\ell \in \mathcal{P}_\psi}(1-e_{D_{n,\ell}}) = e_\psi\cdot \prod_{\ell \mid n}(1-e_{D_{n,\ell}})$ is non-zero.

By combining these facts with the explicit definition of $e_n$ one directly checks that $e = e_n$, as required. \end{proof}

\begin{example}\label{annihilator exam}{\em Lemma \ref{useful 3} can be used to extend the explicit computation made by the second author in \cite[Prop. 2.4]{Seo4} beyond the special cases that are considered there. For the moment we only record two easy examples.

\noindent{}(i) If $n$ is a power of a prime $p$, then $D_{n,p} = G_n^+$ so $e_n = 1$ and hence $I_n$ vanishes. 

\noindent{}(ii) If $n$ is not a prime power and there exists a prime divisor $p$ of $n$ such that $D_{n,p} \subseteq D_{n,\ell}$ for all prime divisors $\ell$ of $n$, then $e_n = 1-e_{D_{n,p}}$ and so $I_{n} = \ZZ[G_n^+]\cdot \sum_{g \in D_{n,p}}g$.
}\end{example}

\begin{remark}\label{more general ann}{\em Fix $f$ in $\mathcal{F}^{\rm d}$. Then for every $n$ in $\mathbb{N}^\ast$ the containment (\ref{unit prop}) implies $f(n)^{1+\tau}$ belongs to $V(n)$, whilst the relations (\ref{basic facts}) combine with the argument of Lemma \ref{useful 3} to imply that $f(n)^{1+\tau} = e_n\cdot f(n)^{1+\tau}$ in $\QQ\otimes_\ZZ V(n)$. In particular, since for every homomorphism $\psi: G_n^+ \to \QQ^{c,\times}$ with $e_\psi\cdot e_n \not= 0$ one has both $e_\psi\cdot \varepsilon_n \not= 0$ and  ${\rm dim}_{\QQ^c}\bigl(e_\psi(\QQ^c\otimes_\ZZ V(n)))= 1$ (by Dirichlet's Unit Theorem), there exists an element $j_{f,n}$ of $\QQ[G_n^+]e_n$ such that $f(n)^{1+\tau} = \varepsilon_n^{j_{f,n}}$ in $\QQ\otimes_\ZZ V(n)$.}
\end{remark}

\begin{remark}{\em The result of Lemma \ref{useful 3} concerns Galois relations between cyclotomic numbers in real abelian fields and so is, in principle, well-known (see, for example, the article of Solomon \cite{solomon} and the references contained therein). However, the precise nature of the result of Lemma \ref{useful 3} will play an important role in later arguments and so we have given a direct proof.}\end{remark}


\subsection{}In this section we collect together several general remarks concerning distributions.

\begin{remark}{\em If $f$ in $\mathcal{F}^{\rm d}$ is non-trivial, then for every prime $p$, there exists an $m$ in $\mathbb{N}(p)$ with $f(m) \not= 1$ (this follows from Lemma \ref{useful 1} with $\Sigma = \mathbb{N}(p)$). However, for any given finite set of prime numbers $\mathcal{P}$ there exists a non-trivial $f_\mathcal{P}$ in $\mathcal{F}^d$ with the property that $f_\mathcal{P}(\ell^m) = 1$ for all $\ell\in \mathcal{P}$ and $m\in \mathbb{N}$. In fact, for any non-trivial $f$ in $\mathcal{F}^{{\rm d},+}$ the distribution $f_\mathcal{P} := (\prod_{\ell \in \mathcal{P}}(1-\sigma_\ell))(f)$ has the required property and is non-trivial by Lemma \ref{useful 2}.} \end{remark}

\begin{remark}{\em If one fixes a prime $p$, then one can formulate a natural $p$-adic analogue of the conjectural equality (\ref{corrected coleman}) by defining $\mathcal{F}^{{\rm d},p}$ just as $\mathcal{F}^{{\rm d}}$ except that for each $n$ in $\mathbb{N}^*$ the condition (\ref{unit prop}) is replaced by $f(n) \in \ZZ_p\otimes_\ZZ E(n)'$ and then asking if the quotient of $\mathcal{F}^{{\rm d},p}$ by its torsion subgroup is generated  over the pro-$p$ completion $\widehat{R}^p$ of $R$ by $\Phi$? However, this question has a negative answer. For example, if one fixes any prime $\ell$ different from $p$, then the assignment
\[ f_\ell(n) := \begin{cases}  n^{-1}\otimes \ell, &\text{if $n$ is a power of $\ell$,}\\
                          1, &\text{otherwise,}\end{cases}\]
gives a well-defined, non-torsion, element $f_\ell$ of $\mathcal{F}^{{\rm d},p}$ and Lemma \ref{useful 1} (with $\Sigma = \mathbb{N}(p)$) implies that $f_\ell$ does not belong to $\widehat{R}^p\cdot \Phi$.}\end{remark}\
%
%

We finally give an example of a norm-compatible family of global units in the cyclotomic $\ZZ_p$-extension of a cyclotomic field that cannot arise as the restriction of a distribution.

To describe this we fix distinct odd primes $p$ and $q$ and for each $b$ in $\mathbb{N}$ we set $\Gamma _b := G_{qp^b}^+$ and we choose an element $T_b$ of $R$ that projects to the element $\sum_{g \in \Gamma_{b}}g$ of $\ZZ[\Gamma_{b}]$. For each $a$ in $\mathbb{N}^*$ we then set $\Pi_a := \sum_{b=1}^{b=a-1}T_b$.

\begin{lemma}\label{nc-nd exam}\
\begin{itemize}
\item[(i)] The family $((\varepsilon_{qp^a})^{\Pi_a})_{a \ge 2}$ is norm-compatible.
\item[(ii)] If $q-1$ is not divisible by the order of $q$ modulo $p$, then there is no $f$ in $\mathcal{F}^d$ such that $f(qp^a) = (\varepsilon_{qp^a})^{\Pi_a}$ for all $a \ge 2$.
\end{itemize}
\end{lemma}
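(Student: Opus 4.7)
For part (i), the plan is to reduce the norm-compatibility to a single vanishing statement. The distribution relation for $1-\zeta$, first case of (\ref{basic facts}) applied with $\ell = p$ and $m = qp^a$, gives $\mathrm{N}^{qp^{a+1}}_{qp^a}(\varepsilon_{qp^{a+1}}) = \varepsilon_{qp^a}$. Combining this with the $R$-equivariance of the norm and the fact that $\Pi_{a+1}$ projects to $\Pi_a + T_a$ at level $qp^a$, part (i) reduces to the identity $\varepsilon_{qp^a}^{T_a} = 1$. Since $T_a$ projects to $N_a := \sum_{g \in G_{qp^a}^+}g$, one has $\varepsilon_{qp^a}^{T_a} = \mathrm{N}_{\QQ(qp^a)^+/\QQ}(\varepsilon_{qp^a}) = \mathrm{N}_{\QQ(qp^a)/\QQ}(1-\zeta_{qp^a}) = \Phi_{qp^a}(1) = 1$, the last equality because $qp^a$ has two distinct prime factors.

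For part (ii), I plan an argument by contradiction. Assuming $f\in\mathcal{F}^{\mathrm{d}}$ with $f(qp^a) = \varepsilon_{qp^a}^{\Pi_a}$ for $a\ge 2$, the analogous one-step norm calculation gives $f(qp) = \varepsilon_{qp}^{N_1} = 1$. Next, the second case of (\ref{basic facts}) at level $qp^a$ with $\ell = q$, combined with $\mathrm{N}^{qp^a}_{p^a}(1-\zeta_{qp^a}) = (1-\zeta_{p^a})^{1-\sigma_q}$, yields $f(p^a)^{1-\sigma_q} = \varepsilon_{p^a}^{(1-\sigma_q)\Pi_a|_{p^a}}$, so $f(p^a) = \varepsilon_{p^a}^{\Pi_a|_{p^a}}u_a$ for some $u_a$ in $E^p(p^a)^{\sigma_q = 1}$. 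Then norm-compatibility of $\{f(p^a)\}_a$ translates, via the crucial computation that the image of $T_a$ in $\ZZ[G_{p^a}^+]$ equals $(q-1)N_{p^a}$ (since $G_{qp^a}^+ \twoheadrightarrow G_{p^a}^+$ has fibres of size $q-1$) together with the identity $\varepsilon_{p^a}^{N_{p^a}} = p$, into the tower of relations
\[
\mathrm{N}^{p^{a+1}}_{p^a}(u_{a+1}) = u_a\cdot p^{1-q}, \qquad u_a \in E^p(p^a)^{\sigma_q = 1}, \quad a \ge 1.
\]

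The main obstacle, and the place where the hypothesis enters, is a $p$-adic valuation analysis of this tower. Set $d := \mathrm{ord}_p(q)$ and $v := v_p(q^d - 1)\ge 1$, and let $K_a := \QQ(p^a)^{\sigma_q}$; then $u_a$ lies in $K_a$. The degrees $[K_a:\QQ]$ stabilise for $a\ge v$ to $m := p^{v-1}(p-1)/d$, so $K_a = K_\infty$ for $a\ge v$ is a single number field in which $p$ is totally ramified, and $u_a$ lies in $E^p(K_\infty)$. In this stable range the norm relation simplifies to $u_{a+1}^p = u_a\cdot p^{1-q}$; setting $\beta_a := v_{K_\infty}(u_a)\in\ZZ$ gives the linear recursion $p\beta_{a+1} = \beta_a - (q-1)m$. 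The key point is that this recursion admits a sequence of integer solutions $(\beta_a)_{a\ge v}$ only if $\beta_a$ is forced to its $p$-adic fixed point $-(q-1)m/(p-1) = -(q-1)p^{v-1}/d$, and this value is an integer only when $d \mid (q-1)p^{v-1}$, or equivalently (since $\gcd(d,p)=1$) when $d \mid q-1$, contradicting the hypothesis.

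A subtlety to handle along the way is the non-generic case $v>1$: for $1 \le a < v$ the chain $K_a \subsetneq K_{a+1}$ grows strictly and the norm $\mathrm{N}^{p^{a+1}}_{p^a}$ restricts on $K_{a+1}$ to $\mathrm{N}^{K_{a+1}}_{K_a}$ rather than to the $p$-th power map. These pre-stable steps introduce no obstruction (they only shift the value of $\beta_v$ by an integer), but must be tracked to ensure that the valuation argument runs uniformly in the generic and non-generic cases.
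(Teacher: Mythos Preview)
Your proof is correct and follows essentially the same approach as the paper's: both arguments pass from $f(qp^a)$ to $f(p^a)$ via the norm relation, write $f(p^a)=\varepsilon_{p^a}^{\Pi_a}\cdot(\text{$\sigma_q$-fixed $p$-unit})$, and derive a contradiction from a $p$-adic valuation analysis showing that the valuation of this $p$-unit is forced to equal the non-integer $-(q-1)[L:\QQ]/(p-1)$. Your one-step recursion $p\beta_{a+1}=\beta_a-(q-1)m$ with its $p$-adic fixed point is just a repackaging of the paper's iterated congruence $\beta_a\equiv (q-1)m\,\frac{p^t-1}{p-1}\pmod{p^t}$ (with $u_a=x_a^{-1}$ and $m=v_L(p)$), and the observation $f(qp)=1$ is correct but unused.
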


\begin{proof} Claim (i) follows easily from the fact that for $a \ge 2$ one has $\varepsilon_{qp^a}^{T_a} = {\rm N}^{\QQ(qp^a)^+}_\QQ(\varepsilon_{qp^a}) = 1$.

To prove claim (ii) we argue by contradiction and thus assume that there exists $f$  in $\mathcal{F}^{\rm d}$ with $f(qp^a) = (\varepsilon_{qp^a})^{\Pi_a}$ for all $a \ge 2$. Then for all such $a$ one would have
\[ (\varepsilon_{p^a})^{\Pi_a(\sigma_q-1)} = {\rm N}^{qp^a}_{p^a}((\varepsilon_{qp^a})^{\Pi_a}) = {\rm N}^{qp^a}_{p^a}(f(qp^a)) = f(p^a)^{\sigma_q-1}\]
and hence there would exist a non-zero element $x_a$ of the fixed subfield, $L$ say, of $\bigcup_a\QQ(p^a)$ by $\sigma_q$ with
\begin{equation}\label{next} (\varepsilon_{p^a})^{\Pi_a} = x_a\cdot f(p^a).\end{equation}

Fix $t$ in $\mathbb{N}$ and $a$ in $\mathbb{N}$ with $L \subseteq \QQ(p^a)$. Then, by comparing (\ref{next}) to the image under ${\rm N}^{p^{a+t}}_{p^a}$ of the corresponding equality with $a$ replaced by $a+t$, and using the fact that the elements $\{f(p^a)\}_a$ are norm compatible, one finds that 
$x_{a+t}^{p^t} = (p^{q-1})^{\sum_{i=0}^{i=t-1}p^i}\cdot x_a$ and hence that

\[ p^t\cdot v_L(x_{a+t}) = (q-1)(\sum_{i=0}^{i=t-1}p^i)v_L(p) + v_L(x_a) = (q-1)v_L(p)\frac{p^t-1}{p-1} + v_L(x_a),\]
where $v_L(-)$ is the valuation on $L$ at the unique prime above $p$.

Now the stated assumption on $q-1$ implies that $(q-1)v_L(p) = r + s(p-1)$ for integers $r$ and $s$ that satisfy $0 < r < p-1$ and $s\ge 0$. In this case, therefore, one finds that for all sufficiently large $t$ the above equality implies that
\[ \begin{cases} -v_L(x_a) \equiv r\frac{p^t-1}{p-1} - s \,\,({\rm mod}\,\, p^t), &\text{if $v_L(x_a)\le 0$}\\
                  v_L(x_a) \equiv p^t- r\frac{p^t-1}{p-1} + s \,\,({\rm mod}\,\, p^t), &\text{if $v_L(x_a)> 0$.}\end{cases}\]
But, since $0 < r < p-1$, it is easily shown that no such integer $v_L(x_a)$ can exist.
\end{proof}

\section{Circular distributions and Euler systems}\label{circ dist} In this section we will establish a close link between the values of distributions in the groups $\mathcal{F}^{\rm d}$ and $\mathcal{F}^{\rm c}$.

The precise result is perhaps itself of some interest and will also later play a key role in the construction of the homomorphism $\kappa$ that occurs in Theorem \ref{main result}.

\subsection{}For an abelian group $A$ we write $A_{\rm tor}$ for its torsion subgroup and $A_{\rm tf}$ for the quotient of $A$ by $A_{\rm tor}$. For each prime $\ell$ we set $A_\ell := \ZZ_{\ell}\otimes_\ZZ A$.

For a natural number $n$ we set $\varphi(n) := \#G_n$. We also write $\ZZ_{(n)}$ and $\ZZ_{\{n\}}$ for the subrings of $\QQ$ that are obtained by intersecting  the $\ell$-localisations $\ZZ_{(\ell)}$ of $\ZZ$ over all primes $\ell$ that divide $n$ and over all primes $\ell$ that either divide $n$ or are coprime to $\varphi(n)$ respectively. We note, in particular, that $\ZZ_{\{n\}}$ is a subring of $\ZZ_{(n)}$.


We can now state the main result of this section.

\begin{theorem}\label{first step} Fix a natural number $m$. Then for each function $f$ in $\mathcal{F}^{\rm d}_{(m)}$ and each $n$ in $\mathbb{N}(m)$ there exists an element $r_{f,n}$ of $\ZZ_{(n)}\otimes_\ZZ R_n$ such that $f(n) = \Phi(n)^{r_{f,n}}$ in $\ZZ_{(n)}\otimes_\ZZ E(n)'$. If $f$ belongs to $\mathcal{F}^{\rm sd}_{(m)}$, then one can take each element $r_{f,n}$ to belong to $\ZZ_{\{n\}}\otimes_\ZZ R_n$.
\end{theorem}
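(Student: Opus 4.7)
The plan is to decompose $f(n)$ via complex conjugation into its $\tau$-invariant and $\tau$-antiinvariant components, treat each separately, and then control the denominators that arise in comparing $f(n)$ with $\Phi(n)$ using a combination of Greither's structural results on cyclotomic units and the Euler-system structure coming from the distribution relations.

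For the $\tau$-antiinvariant part, since $\QQ(n)$ is CM the module $\QQ\otimes_\ZZ E(n)'$ is $\tau$-fixed, and so $f(n)^{1-\tau}$ is a root of unity in $\QQ(n)^\times$. A direct calculation gives $\Phi(n)^{1-\tau}=-\zeta_n$, which generates $\mu(\QQ(n))$ as an $R_n$-module, and hence one can write $f(n)^{1-\tau}=\Phi(n)^{(1-\tau)s}$ for some $s\in R_n$ with no denominators required.

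For the $\tau$-invariant part $f(n)^{1+\tau}\in V(n)$, Remark \ref{more general ann} already supplies an element $j_{f,n}\in\QQ[G_n^+]e_n$ with $f(n)^{1+\tau}=\Phi(n)^{(1+\tau)j_{f,n}}$ in $\QQ\otimes_\ZZ V(n)$, and Lemma \ref{useful 3} ensures that $j_{f,n}$ is unique modulo $I_n$. The task then reduces to bounding the denominators of $j_{f,n}$. Here I would invoke Greither's theorem on the $R_n$-module structure of the cyclotomic unit submodule of $V(n)$: for each prime $\ell$ coprime to $|G_n^+|$, his results identify $\ZZ_\ell\otimes V(n)$ with $\ZZ_\ell\otimes R_n\varepsilon_n$ on the $e_n$-component (up to cohomologically trivial pieces), which rules out $\ell$-denominators and immediately yields the strict-case conclusion at every prime $\ell\nmid n\varphi(n)$.

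The remaining denominators must be controlled by exploiting the Euler-system structure of $f$. For primes $\ell\mid n$ the plan is to move up the tower $\QQ(n)\subset\QQ(n\ell^a)$: the distribution relations (\ref{basic facts}) supply a norm-compatible family of elements in $V(n\ell^a)$, and the triviality of the intersection of $\ell^a$-th power subgroups (of the type underlying Lemma \ref{key useful}) forces $j_{f,n}$ to lie in $\ZZ_\ell[G_n^+]$, completing the general case. For the strict case one must further eliminate denominators at primes $\ell\mid\varphi(n)$ with $\ell\nmid n$; here the congruence relation (\ref{defining prop2}) provides the extra rigidity needed to upgrade the bound on $j_{f,n}$ from $\ZZ_{(n)}[G_n^+]$ to $\ZZ_{\{n\}}[G_n^+]$. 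The principal obstacle — and the point where the earlier argument of \cite{Seo1} required Greenberg's conjecture — is this control of $\ell$-denominators for $\ell$ dividing $|G_n^+|$; dispensing with Greenberg should be possible by combining Greither's theorem at the finite level with the Euler-system relations directly, rather than by an Iwasawa-theoretic limit along the cyclotomic tower.
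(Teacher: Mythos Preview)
Your decomposition via $\tau$ and handling of the $(1-\tau)$-part are fine, and the rough shape of the argument for primes $\ell\nmid n\varphi(n)$ in the strict case is correct (though the paper attributes this step to Rubin's Euler-system bound combined with the Gras Conjecture rather than to a structural result of Greither at finite level).

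The genuine gap is your treatment of primes $p\mid n$, and it is precisely the step you flag as the ``principal obstacle.'' You propose to move up the tower $\QQ(n)\subset\QQ(np^a)$ and use an argument ``of the type underlying Lemma~\ref{key useful}'' to force $j_{f,n}\in\ZZ_p[G_n^+]$. But Lemma~\ref{key useful} says that a norm-compatible totally positive element fixed by $1-\sigma_q$ must be \emph{trivial}; it does not control denominators of an element of $\QQ[G_n^+]e_n$. Having a norm-compatible family $(f(np^a)^{1+\tau})_a$ and knowing that $\bigcap_a V(n)^{p^a}=1$ gives you no purchase on the $p$-integrality of $j_{f,n}$ without some \emph{comparison at each level} between $f(np^a)^{1+\tau}$ and the cyclotomic module, and it is exactly this comparison that is hard when $p\mid\#G_n^+$ and the group ring is not semisimple.

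The paper's route here is the opposite of what you suggest: it \emph{does} pass to an Iwasawa-theoretic limit. The key input is Proposition~\ref{bijection lemma}, which shows that the inverse limit $C(n)_p^\infty\to\mathcal{F}^{\rm u}(n)_p^\infty$ along the cyclotomic $\ZZ_p$-tower is bijective. This is proved by combining Greither's equality of characteristic ideals ${\rm char}_{\Lambda_\psi}((E/C)_\psi)=\pi_\psi^a\cdot{\rm char}_{\Lambda_\psi}(X^p_{m,\psi})$ at infinite level with the Euler-system divisibility ${\rm char}_{\Lambda_\psi}(X^p_{m,\psi})\mid{\rm char}_{\Lambda_\psi}((E/\mathcal{F}^{\rm u})_\psi)$ coming from the distribution structure of $f$. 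One then reads off $f(n)\in C(n)_p$ from membership in the inverse limit. The avoidance of Greenberg's conjecture is achieved \emph{through} this Iwasawa-theoretic argument, not by bypassing it; your proposal to work ``directly, rather than by an Iwasawa-theoretic limit'' is exactly where the argument of \cite{Seo1} went wrong (cf.\ Remark~\ref{soogil rem}), and you have not supplied a substitute mechanism.
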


This result is a considerable strengthening of the main results of the second author in \cite{Seo1} and \cite{Seo2} in that it considers the $\ell$-primary properties of the values of distributions at integers $n$ for which $\varphi(n)$ can be divisible by $\ell$.

In Corollary \ref{values-descent} we will also show Theorem \ref{first step} implies that any given distribution has `cyclotomic values' if and only if it has the same Galois descent properties as do cyclotomic units.

\subsection{}There are two differing notions of `Euler system' that are useful for us since they are respectively related to the groups of distributions $\mathcal{F}^{\rm sd}$ and $\mathcal{F}^{\rm d}$.

For the reader's convenience we now recall the basic facts concerning these notions.

\subsubsection{}\label{first euler} To describe the first we write $\mathcal{J}(m)$ for each natural number $m$ for the set of positive square-free integers that are only divisible by primes congruent to $1$ modulo $m$.

Then an Euler system over the field $\QQ(m)$ is defined by Rubin in \cite[\S1]{RL} to be a map
\[ \varepsilon: \mathcal{J}(m)\to \QQ^{c,\times}\]
that satisfies the following four conditions for each $r$ in $\mathcal{J}(m)$ and each prime divisor $\ell$ of $r$:

\begin{itemize}
\item[(ES$_1$)] $\varepsilon(r)$ belongs to $\QQ(mr)^\times$.
\item[(ES$_2$)] $\varepsilon(r)$ belongs to $E(mr)$ if $r> 1$.
\item[(ES$_3$)] ${\rm N}^{r}_{r/\ell}(\varepsilon(r)) = \varepsilon(r/\ell)^{\sigma_\ell-1}$.
\item[(ES$_4$)] $\varepsilon(r) \equiv \varepsilon(r/\ell)$ modulo all primes above $\ell$.
\end{itemize}

For each $f$ in $\mathcal{F}_{(m)}$ and each $\zeta$ in $\mu_\infty^*$, we define $\varepsilon_{f,\zeta}$ to be the function on $\mathcal{J}(m)$ obtained by setting
\[ \varepsilon_{f,\zeta}(m') := f(\zeta\cdot \prod_{\ell \mid m'}\zeta_\ell)\]
for each $m'$ in $\mathcal{J}(m)$.

Coleman observed that if $f$ belongs to $\mathcal{F}_{(m)}^{\rm sd}$ and $\zeta$ to $\mu_m$, then the second distribution relation in (\ref{basic facts}) combines with the congruence property (\ref{defining prop2}) to imply $\varepsilon_{f,\zeta}$ is an Euler system over $\QQ(m)$ in the above sense (cf. \cite[Lem. 4.1]{Seo1}).

In addition, the first relation in (\ref{basic facts}) implies that for any $m$ the elements $\varepsilon_{f,\zeta_{mp^i}}(1) = f(\zeta_{mp^i})$ form a norm-compatible family as $i$ varies over the natural numbers.

However, if $f$ belongs only to $\mathcal{F}_{(m)}^{\rm d}$ then for any $\zeta$ in $\mu_m$ the function $\varepsilon_{f,\zeta}$ need not be an Euler system in the above sense since a failure of $f$ to satisfy the congruence property (\ref{defining prop2}) means that $\varepsilon_{f,\zeta}$ need not satisfy the condition (ES$_4$).

For this reason we are led to consider an alternative definition of Euler system.

\subsubsection{}\label{second euler}For each natural number $m$ we write $\mathcal{R}(m)$ for the set of square-free products of primes that do not divide $m$.

To describe the second notion of Euler system we fix a prime $p$ and, for each $r$ in $\mathcal{R}(p)$, we write $\QQ(r)^p$ for the composite over all primes divisors $\ell$ of $r$ of the maximal totally real subextension of $\QQ(\ell)$ that has $p$-power degree over $\QQ$. We note, in particular, that for any natural number $m$ prime to $r$ the field $\QQ(mr)$ is an extension of $\QQ(m)\QQ(r)^p$ of degree prime to $p$.

We now fix a multiple $m$ of $p$. Then for each function $f$ in $\mathcal{F}_{(m)}$ and each natural number $t$ we define a function $\varepsilon_{f,t}$ on  $\mathcal{R}(m)$ by setting
\[ \varepsilon_{f,t}(r) := {\rm Norm}_{\QQ(p^tmr)/\QQ(p^tm)\QQ(r)^p}(f(p^tmr)).\]

Then, if $f$ belongs to $\mathcal{F}_{(m)}^{\rm d}$, the distribution relation (\ref{basic facts}) implies that, as $t$ varies over all natural numbers, the collection of functions $\varepsilon_{f,t}$ constitutes an Euler system in the sense defined by Rubin in
\cite[Def. 2.1.1 and Rem. 2.1.4]{R} for the data $K = \QQ(m), \mathcal{N} = \{p\}$, $T = \ZZ_p(1)$ and with $K_\infty$ taken to be the union of the fields $\QQ(p^tm)$ for $t \ge 1$. (See \cite[\S3.2]{R} for an explicit description of the cohomology groups that occur in this case.)

In particular, if $f$ belongs to $\mathcal{F}_{(p)}^{\rm d}$, then the argument of \cite[Cor. 4.8.1 and Exam. 4.8.2]{R} shows that for each $m$ in $\mathbb{N}(p)$, each $r$ in $\mathcal{R}(m)$ and each prime $\ell$ that does not divide $mr$ there exists an integer $t$ that is prime to $p$ and is such that
\begin{equation}\label{complicated} f(\zeta_\ell\cdot \zeta_{mr})^t \equiv f(\zeta_{mr})^t \,\,\text{ modulo all primes above $\ell$.}\end{equation}

This observation implies, in particular, that for any $f$ in $\mathcal{F}_{(p)}^{\rm d}$, any $m$ in $\mathbb{N}(p)$ and any $\zeta$ in $\mu_m$, the function $\varepsilon_{f,\zeta}$ defined in \S\ref{first euler} satisfies the conditions (ES$_1$), (ES$_2$), (ES$_3$) and a variant (ES$_4$)$_p$ of the condition (ES$_4$) in which the necessary congruences are only valid after projecting from the appropriate residue fields to their $p$-primary components.

\subsubsection{}\label{cyclo section} Finally we recall some basic facts concerning cyclotomic units.

For each $n$ in $\mathbb{N}^*$ the group of `circular numbers' in $\QQ(n)$ is defined by Sinnott \cite{sinnott} to be the subgroup of $\QQ(n)^\times$ given by
\[ C'(n) := \{(1-\zeta)^r: \zeta \in \mu_n^*, r \in R_n\}\]
and that the group $C(n):= C'(n)\cap E(n)$ of `cyclotomic units' has finite index in $E(n)$.

It is clear that for any $m$ in $\mathbb{N}(n)$ one has $E(n) \subseteq E(m)$ and $C(n) \subseteq C(m)$.
 The following observation of Gold and Kim concerning the induced map $E(n)/C(n) \to E(m)/C(m)$ will play a key role in our argument.

\begin{lemma}\label{gold kim inclusion} For each $n$ in $\mathbb{N}^*$ and each $m$ in $\mathbb{N}(n)$, one has $C(n) = H^0(G^m_n,C(m))$ and hence the natural map $E(n)/C(n) \to E(m)/C(m)$ is injective.\end{lemma}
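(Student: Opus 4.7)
The inclusion $C(n) \subseteq H^0(G^m_n, C(m))$ is formal. Any element of the form $(1-\zeta)^r$ with $\zeta \in \mu_n^* \subseteq \mu_m^*$ and $r \in R_n$ lies in $\QQ(n)^\times$ (hence is fixed by $G^m_n$) and also belongs to $C'(m)$ via the projection $\pi^m_n\colon R_m \twoheadrightarrow R_n$, so $C(n) = C'(n) \cap E(n) \subseteq C(m) \cap \QQ(n)^\times$. Granted the claimed equality $C(n) = H^0(G^m_n, C(m))$, the injectivity of $E(n)/C(n) \to E(m)/C(m)$ is immediate: an element $u \in E(n)$ whose image lies in $C(m)$ satisfies $u \in E(n) \cap C(m)$, and any such $u$ is automatically $G^m_n$-fixed (since $u \in \QQ(n)^\times$), hence lies in $C(m)^{G^m_n} = C(n)$.

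The substantive content is therefore the reverse inclusion $H^0(G^m_n, C(m)) \subseteq C(n)$. My plan is to reduce to the case $m = n\ell$ for a single prime $\ell$ by induction on the number of prime factors of $m/n$, since a tower of field extensions decomposes the fixed-point functor via a spectral-sequence-style argument that is trivial in degree zero. In the step case, I take $u \in C(n\ell)^{G^{n\ell}_n}$ and write $u = \prod_i (1-\zeta_i)^{r_i}$ with $\zeta_i \in \mu_{n\ell}^*$ and $r_i \in R_{n\ell}$. I then separate the factors according to whether $\zeta_i$ belongs to $\mu_n^*$ (in which case the factor can be absorbed directly into $C'(n)$ by applying $\pi^{n\ell}_n$ to the exponent) or $\zeta_i \in \mu_{n\ell}^* \setminus \mu_n^*$.

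For the remaining factors, the idea is to exploit the Galois invariance of $u$ by averaging over $G^{n\ell}_n$, and then invoke the distribution relation (\ref{basic facts}) applied to $\Phi$, namely
\[ N^{n\ell}_n(1-\zeta_{n\ell}) = \begin{cases} 1-\zeta_n, & \text{if $\ell \mid n$,}\\ (1-\zeta_n)^{1-\sigma_\ell}, & \text{if $\ell \nmid n$,}\end{cases}\]
together with its conjugates under $G_{n\ell}$. These relations allow each $G^{n\ell}_n$-orbit sum in the exponent of $(1-\zeta_i)$ to be replaced by an element of $R_n$ acting on some $(1-\zeta')$ with $\zeta' \in \mu_n^*$, so after collecting terms $u$ is exhibited as an element of $C'(n)$, and hence of $C'(n) \cap E(n) = C(n)$.

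The principal obstacle is that the naive averaging projector $|G^{n\ell}_n|^{-1}\sum_{\sigma \in G^{n\ell}_n}\sigma$ produces coefficients in $\QQ[G_{n\ell}]$ rather than in $R_{n\ell}$, so one cannot simply average the exponents $r_i$ integrally. I would overcome this by invoking Sinnott's explicit description of $C(n)$ as an $R_n$-module with a canonical set of generators (the Sinnott module), relative to which $C(n\ell)$ admits a normal form compatible with the $G^{n\ell}_n$-action; in this normal form the invariant elements can be read off directly and shown to lie in $C(n)$. The case where $\ell \mid n$ requires the most care, since there the distribution relation is a plain norm identity and one must track the failure of $\ell$ to split, but this is precisely the situation Sinnott's framework is designed to handle.
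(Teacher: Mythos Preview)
The paper does not give its own proof of this lemma: it simply cites \cite[Cor.~3]{goldkim}. So there is nothing to compare your argument against beyond the observation that you are attempting to reprove a result the authors are content to quote.

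That said, your sketch is not yet a proof. The easy inclusion and the deduction of injectivity are fine, and the reduction to $m = n\ell$ is reasonable. But the core difficulty --- showing that a $G^{n\ell}_n$-invariant element of $C(n\ell)$ lies in $C(n)$ --- is exactly where your argument becomes vague. You correctly identify that naive averaging fails integrally, and then propose to ``invoke Sinnott's explicit description'' in which ``the invariant elements can be read off directly.'' This is the entire content of the lemma, and you have not carried it out. Sinnott's module of circular numbers does not come with an obvious normal form under which Galois invariants are transparent; the point of Gold and Kim's paper is precisely to construct explicit $\ZZ$-bases of $C(m)$ that are compatible with the tower, and this takes real work. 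Your proposal amounts to asserting that such bases exist and behave well, which is what Gold--Kim prove.

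If you want to give a self-contained argument, you would need to either reproduce the Gold--Kim basis construction or find a genuinely different route (for instance, via Sinnott's index computations combined with a rank argument). As written, the step ``in this normal form the invariant elements can be read off directly'' is a restatement of the claim, not a proof of it.
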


\begin{proof} See \cite[Cor. 3]{goldkim}. \end{proof}

\subsection{} As an important preliminary to the proof of Theorem \ref{first step}, in this section we shall prove a technical result about inverse limits of cyclotomic units. To do this we fix a prime $p$ and a multiple $m$ of $p$.

We define the group of `unit-valued distributions of level $(p,m)$' to be the $R$-submodule  $\mathcal{F}_{(p,m)}^{\rm ud}$ of $\mathcal{F}^{\rm d}$ comprising maps $f$ with the property that for every non-negative integer $t$ one has $f(mp^t)\in E(mp^t)$. We note, in particular, that if $m$ is not a power of $p$, then (\ref{unit prop}) implies that $\mathcal{F}_{(p,m)}^{\rm ud} = \mathcal{F}^{\rm d}$.

For each non-negative integer $t$ we then write $\mathcal{F}^{\rm u}(mp^t)$ for the $R_{mp^t}$-submodule of $E(mp^t)$ that is generated by the set $\{f(\zeta): f\in \mathcal{F}_{(p,m)}^{\rm ud}, \zeta \in \mu_{mp^t}^*\}$.

With this definition, the inclusion $\mathcal{F}^{\rm c} \subseteq
\mathcal{F}^{\rm d}$ implies $C(mp^t)\subseteq \mathcal{F}^{\rm u}(mp^t)$ and, in addition, it is also clear that for each $t' \ge t$ the following two properties are satisfied
\begin{equation}\label{inclusion} \begin{cases} &\text{ the inclusion } E(mp^t) \subseteq E(mp^{t'})\,\text{ restricts to a map } \, \mathcal{F}^{\rm u}(mp^t) \subseteq  \mathcal{F}^{\rm u}(mp^{t'}),\\
&\text{ the norm map } {\rm N}^{mp^{t'}}_{mp^t} \text{ restricts to a map } \, \mathcal{F}^{\rm u}(mp^{t'}) \to \mathcal{F}^{\rm u}(mp^t).\end{cases}
\end{equation}

In the following result we set
\[ \mathcal{F}^{\rm u}(m)_p^\infty := \varprojlim_{a\ge 0} \mathcal{F}^{\rm u}(mp^a)_p \,\,\text{ and }\,\, C(m)_p^\infty := \varprojlim_{a\ge 0}C(mp^a)_p,\]
where in both cases  the transition maps are induced by the norms ${\rm N}^{mp^{a+1}}_{mp^a}$.

\begin{proposition}\label{bijection lemma} The natural inclusion map $C(m)^\infty_p \to \mathcal{F}^{\rm u}(m)^\infty_p$ is bijective.
\end{proposition}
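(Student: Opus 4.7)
The inclusion $C(m)^\infty_p \hookrightarrow \mathcal{F}^{\rm u}(m)^\infty_p$ is immediate from the containments $C(mp^t) \subseteq \mathcal{F}^{\rm u}(mp^t)$ at each finite level, together with compatibility under the norm maps. The substantive content is the reverse inclusion, which I would prove by combining Theorem \ref{first step} with standard Iwasawa-theoretic inputs.

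First I would use Theorem \ref{first step} to establish a rational comparison at each finite level. Given $\mathbf{u} = (u_t)_t$ in $\mathcal{F}^{\rm u}(m)^\infty_p$, each $u_t$ is a $\ZZ_p$-linear combination of $R_{mp^t}$-multiples of values $f(\zeta)$ with $f$ in $\mathcal{F}^{\rm ud}_{(p,m)} \subseteq \mathcal{F}^{\rm d}_{(p)}$ and $\zeta$ in $\mu^*_{mp^t}$. Theorem \ref{first step}, together with the Galois equivariance of $f$, writes each such value in the form $\Phi(\zeta)^{r_{f,\zeta}}$ for some $r_{f,\zeta} \in \ZZ_{(mp^t)}\otimes_\ZZ R_{mp^t}$. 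Since $p$ divides $mp^t$ one has $\ZZ_{(mp^t)}\otimes_\ZZ\ZZ_p = \QQ_p$, so after $p$-completion each $u_t$ lies in $\QQ_p\otimes_{\ZZ_p}C'(mp^t)_p$. Since $u_t$ also belongs to $E(mp^t)_p$, it must lie in the $\ZZ_p$-saturation
\[
\widetilde{C}(mp^t)_p := \{x \in E(mp^t)_p : x^{p^k} \in C(mp^t)_p \text{ for some } k \ge 0\}
\]
of $C(mp^t)_p$ inside $E(mp^t)_p$. The norm maps restrict to this saturation, so $\mathbf{u}$ lies in $\widetilde{C}(m)^\infty_p := \varprojlim_t\widetilde{C}(mp^t)_p$.

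It then remains to show $\widetilde{C}(m)^\infty_p = C(m)^\infty_p$. Let $\Lambda := \ZZ_p[[\varprojlim_t G_{mp^t}]]$. By construction the quotient $Q := \widetilde{C}(m)^\infty_p/C(m)^\infty_p$ is a $\Lambda$-submodule of $E(m)^\infty_p/C(m)^\infty_p$ that becomes trivial after inverting $p$, so $Q$ is $\ZZ_p$-torsion as a $\Lambda$-module. Combining Ferrero--Washington's vanishing of the Iwasawa $\mu$-invariant (which transfers, via the Iwasawa main conjecture of Mazur--Wiles, to $\mu = 0$ for $E(m)^\infty_p/C(m)^\infty_p$) with the classical fact, essentially due to Greither, that $E(m)^\infty_p/C(m)^\infty_p$ contains no non-trivial finite $\Lambda$-submodule, one concludes $Q = 0$: the $\mu = 0$ statement forces the finitely generated $\ZZ_p$-torsion module $Q$ to be finite (hence pseudo-null), and the no-finite-submodule property then kills it.

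The principal obstacle is the last step. One has to check carefully that the required Iwasawa-theoretic inputs are available in the exact generality needed (for the cyclotomic $\ZZ_p$-tower above $\QQ(m)$ rather than just $\QQ(\mu_{p^\infty})$), and the argument is expected to need modification at $p = 2$, paralleling the 2-primary subtleties already flagged in the introduction.
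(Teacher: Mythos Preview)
Your argument is circular: in the paper's logical structure, Proposition \ref{bijection lemma} is proved \emph{before} Theorem \ref{first step} and is used as an essential ingredient in its proof (see \S\ref{semi-local}, where the case $p\mid n$ of Theorem \ref{first step} is deduced directly from Proposition \ref{bijection lemma}). So you cannot invoke Theorem \ref{first step} here. There is also a computational slip: since $p$ divides $mp^t$, the ring $\ZZ_{(mp^t)}$ is contained in $\ZZ_{(p)}$ and hence $\ZZ_{(mp^t)}\otimes_\ZZ\ZZ_p=\ZZ_p$, not $\QQ_p$; if Theorem \ref{first step} were independently available, this would in fact give $u_t\in C(mp^t)_p$ outright and the saturation step would be unnecessary.

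The paper's proof avoids this circularity by working directly with the Euler system machinery. The key observation is that each generator $f(\zeta)$ of $\mathcal{F}^{\rm u}(mp^t)$ is the value at $1$ of a function $\varepsilon_{f,\zeta}$ satisfying (ES$_1$)--(ES$_3$) and the weakened congruence (ES$_4$)$_p$; this is enough to run Greither's argument \cite[Th.~3.1]{greither} and conclude that ${\rm char}_{\Lambda_\psi}(X^p_{m,\psi})$ divides ${\rm char}_{\Lambda_\psi}((E(m)^\infty_p/\mathcal{F}^{\rm u}(m)^\infty_p)_\psi)$ for each even $\psi$. Comparing with Greither's equality ${\rm char}_{\Lambda_\psi}((E(m)^\infty_p/C(m)^\infty_p)_\psi)=\pi_\psi^a\cdot{\rm char}_{\Lambda_\psi}(X^p_{m,\psi})$ then forces $Q_\psi[1/p]=0$. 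Combined with the (Greither/Kuz'min) finite-generation of $E(m)^\infty_p/C(m)^\infty_p$ over $\ZZ_p$ and a short argument showing that finiteness of $Q$ implies $Q=0$, this gives the result. Your sketch of the endgame (finiteness $\Rightarrow$ vanishing via $\mu=0$ and no-finite-submodules) is in the right spirit, but the input that produces finiteness must come from the Euler system bound on $E/\mathcal{F}^{\rm u}$, not from Theorem \ref{first step}.
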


\begin{proof} We claim first it is enough to show that the quotient group $Q := \mathcal{F}^{\rm u}(m)^\infty_p/C(m)^\infty_p$ is finite.

To see this we fix a topological generator $\gamma$ of $\Gal(\QQ(mp^\infty)/\QQ(m))$. Then, if $Q$ is finite, there is a natural number $t_0$ such that for any $t \ge t_0$ the element $\gamma^{p^t}$ acts trivially on $\mathcal{F}^{\rm u}(m)^\infty_p/C(m)^\infty_p$. For any $t_1 \ge t_0$ we can then consider the composite homomorphism
\[\theta^{t_1}_{t_0}: (\mathcal{F}^{\rm u}(mp^{t_1})/C(mp^{t_1}))_p \to (\mathcal{F}^{\rm u}(mp^{t_0})/C(mp^{t_0}))_p \to (\mathcal{F}^{\rm u}(mp^{t_1})/C(mp^{t_1}))_p,\]
where the first map is induced by field-theoretic norm and the second by the relevant case of the homomorphism in Lemma \ref{gold kim inclusion}. In particular, since the latter homomorphism is injective the composite sends each $x$ to $\sum_{a=0}^{a=t_1-t_0}(\gamma^{p^{t_0+a}})^m(x)$.

Thus, if $p^{t_1-t_0} \ge \#Q,$ then for any $x$ in the projection of $Q$ to
$(\mathcal{F}^{\rm u}(mp^{t_0})/C(mp^{t_0}))_p$ one has $\theta^{t_1}_{t_0}(x) = p^{t_1-t_0+1}\cdot x = 0$ and hence $Q$ vanishes, as claimed.

We next write $E(m)^\infty_p$ for the inverse limit of the groups $E(mp^t)_p$ for $t \ge 0$ with respect to the maps ${\rm N}^{mp^{t+1}}_{mp^t}$. We recall that the
 main result of Greither in \cite{greither2} (and the earlier work of Kuz'min in \cite{kuzmin}) imply that the quotient module $E(m)^\infty_p/C(m)^\infty_p$ is finitely generated over $\ZZ_p$.

This observation implies that $Q$ is also finitely generated over $\ZZ_p$ and hence is finite if and only if the space $Q[1/p]$ vanishes.

To study this space we write $m'$ for the maximal divisor of $m$ prime to $p$ and use the algebra $\Lambda := \ZZ_p[[{\rm Gal}(\QQ(m'p^\infty)/\QQ)]]$.

We note that any choice of topological generator of $\Gal(\QQ(m'p^\infty)/\QQ(m'p))$ induces a natural identification of algebras
\[ \Lambda[1/p] = \bigoplus_{\psi \in \Xi} \Lambda_\psi[1/p]\]
where $\Xi$ denotes a set of representatives of the $\Gal(\QQ_p^c/\QQ_p)$-conjugacy classes of homomorphisms $G_{m'p}\to \QQ_p^{c,\times}$ and we set $\Lambda_\psi := \ZZ_p[\psi][[T]].$

For any $\Lambda$-module $N$ there is a corresponding direct sum decomposition of $\Lambda[1/p]$-modules
 $N[1/p] = \bigoplus_{\psi \in \Xi} N_\psi[1/p]$ where we write $N_\psi := N\otimes_{\ZZ_p[G(m'p)]}\ZZ_p[\psi]$, regarded as a module over $\Lambda_\psi$ in the obvious way.

It is therefore enough to show that each module $Q_\psi[1/p]$ vanishes and to do this we use the natural exact sequence
\begin{equation}\label{tauto} 0 \to Q_\psi[1/p] \to (E(m)^\infty_p/C(m)^\infty_p)_\psi[1/p] \to (E(m)^\infty_p/\mathcal{F}^{\rm u}(m)^\infty_p)_\psi[1/p]\to 0.\end{equation}

If $\psi$ is an odd character of $G_{m'p}$, then all modules in this sequence vanish. This is because in this case each module is invariant under the action of $1-\tau$ and for each non-negative integer $i$ and each $u$ in $E(mp^i)$ the element $u^{2(1-\tau)}$ has finite order (by \cite[Th. 4.12]{wash}) and hence belongs to $C(mp^i)$.

To deal with the case that $\psi$ is even we write $X^p_{m}$ for the inverse limit $\varprojlim_i {\rm Cl}(\QQ(mp^i))_p$ with respect to the natural norm maps. In this case the result \cite[Th. 3.1]{greither} of Greither (see also the comment at the bottom of \cite[p. 120]{greither2}) shows the existence of an integer $a$ for which there is an equality of characteristic ideals
\begin{equation}\label{greither equality} {\rm char}_{\Lambda_\psi}((E(m)^\infty_p/C(m)^\infty_p)_\psi) = \pi_\psi^a \cdot {\rm char}_{\Lambda_\psi}(X^p_{m,\psi}),\end{equation}
where $\pi_\psi$ is a uniformizer of $\QQ_p(\psi)$.

We next note that for any $f\in \mathcal{F}_{(p,m)}^{\rm ud}$, any non-negative $t$ and any $\zeta \in \mu_{mp^t}^*$ the observations made at the end of \S\ref{second euler} imply that $f(\zeta)$ is equal to the value at $1$ of a unit-valued function $\varepsilon_{f,\zeta}$ from $\mathcal{J}(mp^t)$ to $\QQ^{c,\times}$ that satisfies the conditions (ES$_1$), (ES$_2$), (ES$_3$) and (ES$_4$)$_p$.

These functions $\varepsilon_{f,\zeta}$ may therefore fail to be an Euler system in the strict sense of \cite[\S1]{R}. However, this failure is immaterial to us since the only place that the condition (ES$_4$) is used in \cite{R} is in the argument of Kolyvagin that proves \cite[Prop. 2.4]{R} and this argument applies the map $\varphi_\ell$ that is constructed in \cite[Lem. 2.3]{R}.

In particular, if one takes the integer $M$ in loc cit. to be a power of $p$ (which is the relevant case for us), then $\varphi_\ell$ factors through the projection of $(\mathcal{O}_F/\ell\mathcal{O}_F)^\times$ to its maximal quotient of $p$-power order and hence it is sufficient to replace (ES$_4$) by the weaker condition (ES$_4$)$_p$.

Given this observation, the proof of \cite[Th. 3.1]{greither} shows that in $\Lambda_\psi$ the ideal
${\rm char}_{\Lambda_\psi}(X^p_{m,\psi})$ divides ${\rm char}_{\Lambda_\psi}((E(m)^\infty_p/\mathcal{F}^{\rm u}(m)_p^\infty)_\psi)$.

Combining this fact with the equality (\ref{greither equality}) one deduces that the characteristic ideal over $\Lambda_\psi$ of the kernel of the natural projection map $  (E(m)^\infty_p/C(m)^\infty_p)_\psi \to (E(m)^\infty_p/\mathcal{F}^{\rm u}(m)_p^\infty)_\psi$ is generated by a power of $\pi_\psi$ and hence that this kernel has finite exponent.

Taken in conjunction with the exact sequence (\ref{tauto}), this last fact implies that the module $Q_\psi[1/p]$ vanishes, as required to complete the proof. \end{proof}

\subsection{}\label{semi-local}We are now ready to prove Theorem \ref{first step}. To do this we fix $f$ in $\mathcal{F}^{\rm d}_{(m)}$ and $n$ in $\mathbb{N}(m)$.

Writing $D(n)$ for the $R_n$-module generated by $1-\zeta_n$, we need to prove that $f(n)$ belongs to $\ZZ_{(n)}\otimes_\ZZ D(n)$ and to  $\ZZ_{\{n\}}\otimes_\ZZ D(n)$ for each $f$ in $\mathcal{F}^{\rm sd}_{(m)}$.

Thus, since no prime divisor of $n$ is invertible in $\ZZ_{(n)}$, the proof of \cite[Th. 2.4]{Seo3} implies that it is actually enough to show  $f(n)$ belongs to $\ZZ_{(n)}\otimes_\ZZ C'(n)$, respectively to $\ZZ_{\{n\}}\otimes_\ZZ C'(n)$.

To do this we note at the outset that the element $-\zeta_n = (1-\zeta_n)^{1-\tau}$ of $C'(n)$ is a generator of the torsion subgroup $W_n$ of $\QQ(n)^\times$ except if $n = 2n'$ with $n'$ odd in which case one has $\QQ(n) = \QQ(n')$, $W_{n} = W_{n'}$, and $f(n) = f(n')^{1-\sigma_2}$ as a consequence of (\ref{basic facts}).

It is therefore enough to prove that for every $n$ the image of $f(n)$ in $V_n := \QQ\otimes_\ZZ \QQ(n)^\times$
belongs to $\ZZ_{n}\otimes_\ZZ C'(n)_{\rm tf}$ and to $\ZZ_{\{n\}}\otimes_\ZZ C'(n)_{\rm tf}$ if $f$ belongs to $\mathcal{F}^{\rm sd}_{(m)}$.

Now $\ZZ_{(n)}\otimes_\ZZ C'(n)_{\rm tf}$ and $\ZZ_{\{n\}}\otimes_\ZZ C'(n)_{\rm tf}$ are respectively equal to the intersections in $V_n$ of $\ZZ_{(p)}\otimes _\ZZ C'(n)_{\rm tf}$, as $p$ runs over all primes that divide $n$ and over all primes that either divide $n$ or are prime to $\varphi(n)$.

We are therefore reduced to proving that for each such $p$ one has in $E(n)'_{{\rm tf},p}$ a containment
\begin{equation}\label{needed contain} f(n) \in C'(n)_{{\rm tf},p}.\end{equation}
Since this result is obvious when $n=2$ we will also assume in the sequel that $n >2$ and hence that $\varphi(n)$ is even.

\subsubsection{}\label{gras section}In the case that $f$ belongs to $\mathcal{F}^{\rm sd}_{(m)}$ and $p$ is prime to both $n$ and $\varphi(n)$ (and hence is odd), the containment (\ref{needed contain}) is  well-known.

To explain this we write $G^\ast_n$ for the group $\Hom(G_n,\QQ_p^{c,\times})$ and we fix a finite extension of $\QQ_p$, with valuation ring $\mathcal{O}$, that contains the image of all homomorphisms in $G^\ast_n$.

Then for any $G_n$-module $X$ there is a direct sum decomposition of $\mathcal{O}[G_n]$-modules
\[ \mathcal{O}\otimes_\ZZ X = \bigoplus_{\chi\in G^\ast_n} X^\chi\]
where $X^\chi$ denotes the  $\chi$-isotypic component $\{x \in \mathcal{O}\otimes_\ZZ X: g(x) = \chi(g)\cdot x \, \text{ for all } g \in G_n\}$.

If $\chi(\tau)=-1$, then $(E(n)'_{{\rm tf}})^\chi$ vanishes, whilst if $\chi$ is trivial it is easily checked that $(E(n)'_{{\rm tf}})^\chi \subseteq C'(n)_{\rm tf}^\chi$.

It is therefore enough to prove that for each non-trivial $\chi$ with $\chi(\tau) = 1$ the image of $f(n)$ in $(E(n)')^\chi$ is contained in $C'(n)^\chi$. We therefore now fix such a $\chi$.

We write $U_n$ for the unit group of the field $\QQ(n)$ and $\mathfrak{E}_n$ for the $R_n$-submodule of $\QQ(n)^\times$ that is generated by the values at $1$ of all Euler systems over $\QQ(n)$ of the form $\varepsilon_{h,\zeta}$, with $h$ in $\mathcal{F}_{(n)}^{\rm sd}$ and $\zeta$ in $\mu_n$, as discussed in \S\ref{first euler}.

Then, since both $f(n)$ and $C'(n)$ are contained in $\mathfrak{E}_n$ it is enough to prove that $\mathfrak{E}_n^\chi$ is contained in $C'(n)^\chi.$

Now, the $\mathcal{O}$-modules $\mathfrak{E}_n^\chi$, $U_n^\chi$ and $C'(n)^\chi$ are all free of rank one and, since $\chi$ is non-trivial, one has $\mathfrak{E}_n^\chi \subseteq U_n^\chi$.

The required inclusion is therefore a direct consequence of the fact that
\[ {\rm char}_\mathcal{O}\bigl(U_n^\chi/\mathfrak{E}_n^\chi\bigr) \subseteq {\rm char}_\mathcal{O}\bigl({\rm Cl}(\QQ(n))^\chi) = {\rm char}_\mathcal{O}\bigl(U_n^\chi/C'(n)^\chi\bigr).\]
Here we write ${\rm char}_\mathcal{O}(X)$ for the order ideal of a finite $\mathcal{O}$-module $X$ and ${\rm Cl}(\QQ(n))$ for the ideal class group of $\QQ(n)$. In addition, the displayed inclusion follows from the
 argument used by Rubin to prove \cite[Th. 3.2]{rubin-crelle} and the fact that $\QQ(n) \cap \QQ(p) = \QQ$ since we are assuming that $p$ is prime to $n$, and the displayed equality is true as a consequence of the known validity of the Gras Conjecture (which was shown by Greenberg in \cite{green} to follow from the Main Conjecture of Iwasawa Theory for abelian fields, as subsequently proved, for example, by Mazur and Wiles).

\subsubsection{}It is now enough to prove (\ref{needed contain}) in the case that $f$ belongs to $\mathcal{F}^{\rm d}$ (but not necessarily to $\mathcal{F}^{\rm sd}$) and that $p$ divides $n$.

We assume first that $n$ is not a power of $p$ and so is divisible by at least two primes. In this case the restriction of $f$ to $\mathbb{N}(n)$ is unit-valued (by (\ref{unit prop})) and so $\{f(p^in)\}_{i \ge 0}$ belongs to the limit $\mathcal{F}^{\rm u}(n)_p^\infty$ that occurs in Proposition  \ref{bijection lemma}.

The latter result therefore implies that $\{f(p^in)\}_{i \ge 0}$ belongs to $C(n)^\infty_p$ and hence that $f(n)$ belongs to $C(n)_p$, as required.

If now $n$ is a power of $p$, then we set $a_p := 2$ if $p = 2$ and $a_p = 0$ if $p\not= 2$. We then fix an element $\gamma$ of ${\rm Gal}(\QQ^c/\QQ(p^{a_p}))$ that restricts to give a generator of $\Gal(\QQ(p^\infty)/\QQ(p^{a_p}))$ and define a function $f_\gamma$ on $\mathbb{N}(n)$ by setting $f_\gamma(n') := f(n')^{\gamma-1}$ for all multiples $n'$ of $n$.

Then this function $f_\gamma$ belongs to $\mathcal{F}_{(p,m)}^{\rm ud}$ and so the same argument as above shows that $f_\gamma(n) = f(n)^{\gamma_n-1}$ belongs to $C(n)_p$, where we write $\gamma_n$ for the image of $\gamma$ in $G_n$.

Now, since $n$ is a power of $p$, this implies the existence of an element $r_n$ of $R_{n,p}$ with
\[ f(n)^{\gamma_n-1} = (1-\zeta_n)^{r_n}.\]
Further, since the norm to $\QQ(p^{a_p})$ of this element is trivial, one has $r_n = (\gamma_n-1)\cdot r_n'$ for some element $r_n'$ of $R_{n,p}$ and it is enough to show that the element
\[ x:= f(n)/(1-\zeta_n)^{r'_n}\]
belongs to $C(n)_p$. Note also that $x$ belongs to the $p$-completion of the group of $p$-units in $\QQ(p^{a_p})$ as a consequence of (\ref{unit prop}).

In particular, if $p\not= 2$, then $x$ has the form $\pm p^b$ for some $b$ in $\ZZ_p$ and it is enough to note that both $-1 = (1-\zeta_n)^{n(1-\tau)}$ and $p=(1-\zeta_n)^{\sum_{g \in G_n}g}$ belong to $C(n)$.

If, lastly, $p = 2$, then there are elements $a$ and $b$ of $\ZZ_p$ such that $x = i^a(1-i)^b$. To deal with this case we can also assume, without loss of generality, that $\zeta_4 =i$.

Then if $n=4$, it is enough to note that $i = (1-\zeta_n)^{\tau-1}$ and $1-i = 1-\zeta_n$ belong to $C(4)$.  Similarly, if $n$ is divisible by $8$, then the containment $x \in C(n)_p$ follows from the fact that $i = (1-\zeta_n)^{n(1-\tau)/4}$ and $(1-\zeta_n)^{\sum_{g \in G_n}g} = i^c(1-i)$ for some integer $c$.

This completes the proof of Theorem \ref{first step}.

\subsection{} Finally, we show Theorem \ref{first step} implies that any given distribution in $\mathcal{F}^{\rm d}$ has `cyclotomic values' if and only if it has the same Galois descent properties as Lemma \ref{gold kim inclusion} implies for cyclotomic units.

\begin{corollary}\label{values-descent} Fix a distribution $f$ in $\mathcal{F}^{\rm d}$. For $n$ in $\mathbb{N}^*$ write $C_f(n)$ for the $R_n$-submodule of $E(n)$ generated by $C(n)$ together with $(\Phi^{-v_{f,n}}f)(n)$ where $v_{f,n} = 0$ unless $n = p^d$ for some prime $p$ in which case $v_{f,n}$ is the valuation of $f(n)$ at the unique $p$-adic place of $\QQ(n)$.

Then for every $n$ in $\mathbb{N}^*$ there exists an element $r'_{f,n}$ of $R_n$ such that $f(n) = \Phi(n)^{r'_{f,n}}$ if and only if for all $m$ in $\mathbb{N}^*$ and all $m'$ in $\mathbb{N}(m)$ one has $C_f(m) = H^0(G^{m'}_m,C_f(m'))$. \end{corollary}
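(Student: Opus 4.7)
The forward direction is a short computation: if $f(n) = \Phi(n)^{r'_{f,n}}$ then $(\Phi^{-v_{f,n}}f)(n) = \Phi(n)^{r'_{f,n} - v_{f,n}}$ lies in $R_n\cdot\Phi(n) = D(n)$, and is a global unit (by the very choice of $v_{f,n}$ when $n$ is a prime power, and because $f(n)\in E(n)$ otherwise); hence it belongs to $D(n)\cap E(n) \subseteq C'(n)\cap E(n) = C(n)$, giving $C_f(n) = C(n)$ for every $n$. The desired descent equality is then immediate from Lemma \ref{gold kim inclusion}.

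For the converse I plan to combine Theorem \ref{first step} with the descent hypothesis. Theorem \ref{first step} gives $f(n) \in \ZZ_{(n)}\otimes_\ZZ D(n)$, so that $u_n := (\Phi^{-v_{f,n}}f)(n) \in E(n)$ satisfies $u_n^k \in D(n)\cap E(n)$ for some positive integer $k$ coprime to every prime divisor of $n$; the task is to remove this exponent $k$. Fix a prime $p\mid k$ (so $p$ is coprime to $n$) and consider the tower $\QQ(n) \subseteq \QQ(np) \subseteq \QQ(np^2) \subseteq \cdots$. Note that $np^a$ is never a prime power (since $n$ has some prime factor different from $p$), so $u_{np^a} = f(np^a)$. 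Proposition \ref{bijection lemma}, applied with $np$ in place of $m$, shows that the norm-compatible family $(f(np^{a+1}))_{a\geq 0}$ lies in $C(np)^{\infty}_p$, so $u_{np^a} \in C(np^a)_p$ for every $a\geq 1$. The descent $C_f(n) = C_f(np^a)^{G^{np^a}_n}$ then expresses $u_n$ as $c\cdot u_{np^a}^{\beta}$ with $c\in C(np^a)$ and $\beta \in R_{np^a}$, so the image of $u_n$ in $(E(np^a)/C(np^a))_p$ equals $\beta\cdot \overline{u_{np^a}}=0$. Combining this with the Gold--Kim injection $E(n)/C(n) \hookrightarrow E(np^a)/C(np^a)$ from Lemma \ref{gold kim inclusion} forces the image of $u_n$ in $(E(n)/C(n))_p$ to vanish. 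Running this argument over all primes $p\mid k$ yields $u_n\in C(n)$.

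To upgrade $u_n\in C(n)$ to $u_n\in D(n)\cap E(n)$ --- which is what the corollary actually requires --- I would invoke the reduction from $C'(n)$ to $D(n)$ over $\ZZ_{(n)}$ that is built into the proof of \cite[Th.~2.4]{Seo3} and is used already in Section \ref{semi-local}. Combining $u_n\in C(n)\subseteq C'(n)$ with $u_n\in \ZZ_{(n)}\otimes_\ZZ D(n)$ (from Theorem \ref{first step}) and this reduction yields $u_n\in D(n)\cap E(n)$, so that $f(n) = \Phi(n)^{v_{f,n}}\cdot u_n = \Phi(n)^{r'_{f,n}}$ with $r'_{f,n}\in R_n$.

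The principal obstacle is this final upgrade step: the descent hypothesis is phrased in terms of $C_f$, which only sees $C(n)$ rather than the a priori strictly smaller group $D(n)\cap E(n)$, and so bridging that gap rests on the more delicate local analysis at primes coprime to $n$ encoded in \cite[Th.~2.4]{Seo3}, applied to the element $u_n$ via the integral information already secured by Theorem \ref{first step}.
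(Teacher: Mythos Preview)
Your argument is correct, but it takes a longer route than the paper's, and your appeal to Proposition~\ref{bijection lemma} is unnecessary overkill.

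The paper's proof of sufficiency is a one-line computation for each prime $\ell$: since $\ell \mid n\ell$, Theorem~\ref{first step} applied at level $n\ell$ (not at level $n$) already gives $f(n\ell) \in \ZZ_{(\ell)}\otimes_\ZZ D(n\ell)$, hence $u_{n\ell} \in C(n\ell)_\ell$ and therefore $C_f(n\ell)_\ell = C(n\ell)_\ell$. The descent hypothesis and Lemma~\ref{gold kim inclusion} then give
\[
u_n \in C_f(n)_\ell = H^0\bigl(G^{n\ell}_n, C_f(n\ell)_\ell\bigr) = H^0\bigl(G^{n\ell}_n, C(n\ell)_\ell\bigr) = C(n)_\ell.
\]
No tower, no Proposition~\ref{bijection lemma}. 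Your approach instead applies Theorem~\ref{first step} at level $n$, where it says nothing about primes $p\nmid n$, and then compensates by climbing the whole cyclotomic $\ZZ_p$-tower and invoking Proposition~\ref{bijection lemma} (which is itself one of the hardest inputs to Theorem~\ref{first step}). You could shortcut this simply by applying Theorem~\ref{first step} at level $np$ instead: that already gives $u_{np}\in C(np)_p$, and then your descent/Gold--Kim step with $a=1$ finishes.

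Your worry about the ``final upgrade'' from $u_n\in C(n)$ to $f(n)\in D(n)$ is legitimate, but it is exactly the same reduction the paper relies on (the ``observation made at the beginning of \S\ref{semi-local}'', via \cite[Th.~2.4]{Seo3}). It is not a gap specific to your approach; both arguments need it, and the hypothesis ``no prime divisor of $n$ is invertible in $\ZZ_{(n)}$'' used there becomes vacuous over $\ZZ$, so the same reduction applies integrally.
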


\begin{proof} Necessity of the given conditions is an immediate consequence of Lemma \ref{gold kim inclusion} since if $f(m) = \Phi(m)^{r'_{f,m}}$ for some $r'_{f,m}$ in $R_m$, then $C_f(m) = C(m)$.

For each $n$ we set $f_n := \Phi^{-v_{f,n}}f$. Then to show sufficiency of the stated conditions it suffices, by virtue of the observation made at the beginning of \S\ref{semi-local}, to check that for each prime $\ell$ these conditions imply that $f_n(n)$ belongs to
the $\ell$-adic completion $C(n)_\ell$ of $C(n)$.

It is thus enough to note that these conditions imply that
\[ f_n(n)\in C_f(n)_\ell = H^0(G^{n\ell}_n,C_f(n\ell)_\ell) = H^0(G^{n\ell}_n,C(n\ell)_\ell) = C(n)_\ell\]
where the first equality holds by assumption, the second follows from Theorem \ref{first step} and the third from Lemma \ref{gold kim inclusion}.\end{proof}

\begin{remark}\label{factorial rem}{\em The set $\Sigma := \{n \in \mathbb{N}: \ZZ_{\{n\}}= \ZZ\}$ is cofinal in $\mathbb{N}$ (since it contains $d!$ for all $d$ in $\mathbb{N}$) and is also closed under taking powers. We may therefore apply Lemma \ref{useful 1} to this set to deduce that any $f$  in $\mathcal{F}^{\rm d}$ is uniquely determined by its values at integers in $\Sigma$. This suggests it is possible that, rather than relying on possible Galois descent properties as in Corollary \ref{values-descent}, Theorem \ref{first step} could itself directly imply that for every $f$ in $\mathcal{F}^{\rm sd}$ and every $m$ in $\mathbb{N}^*$ there exists an element $r'_{f,n}$ of $R_n$ with $f(n) = \Phi(n)^{r'_{f,n}}$.  }\end{remark}

\begin{remark}\label{soogil rem}{\em The second author would like to take this opportunity to point out that the proof of the main result of \cite{Seo1} is incorrect as given and further that it seems the argument in loc. cit. can only be corrected either by assuming the sort of Galois descent property on distributions that arises in Corollary \ref{values-descent}, or by inverting all primes that divide $\varphi(nt)$ but not $nt$.
 The point is that \cite[Lem. 5.4(i)]{Seo1} claims that the given inverse limits are equal if and only if $\mathfrak{E}_{p,s}(\mu_{p^rs}) = \mathcal{C}_{p^rs}$ for all $r$ and this may not be true since not all elements of $\mathfrak{E}_{p,s}(\mu_{p^rs})$ must be universal norms in $\QQ(p^{\infty}s)/\QQ(p^rs)$.
A similar issue arises with aspects of the argument used to prove \cite[Th. 2.4]{Seo2} but, given the assumptions in loc. cit., the problem in that case can be avoided by using the approach of \S\ref{gras section}. The second author would like to thank the first author for noticing this.}
\end{remark}

\section{Torsion-valued distributions}\label{torsion section}

In the sequel we shall say that a distribution $f$ in $\calF^{\rm d}$ is `torsion-valued' if $f(n)$ has finite order for every $n$ in $\mathbb{N}^*$ and we write $\calF^{\rm d}_{\rm tv}$ for the $R$-submodule of $\calF^{\rm d}$ comprising all such distributions.
 In particular, it is clear that $\calF^{\rm d}_{\rm tv}$ contains the torsion subgroup $\calF^{\rm d}_{\rm tor}$ of $\calF^{\rm d}$.


We shall now make a detailed study of torsion-valued distributions in order to prove the following result.

In this result, and the sequel, we regard the union $\QQ^{\rm ab}$ of $\QQ(n)$ over $n$ in $\mathbb{N}^*$ as a subfield of $\CC$ and write $\tau$ for the element of $\Gal(\QQ^{\rm ab}/\QQ)$ induced by complex conjugation.

We also use the $R$-module $\widehat{\ZZ}(1)$ defined just prior to the statement of Theorem \ref{main result}.

\begin{theorem}\label{reduction result}\
\begin{itemize}
\item[(i)] $\mathcal{F}_{\rm tv}^{\rm d}$ is equal to $\mathcal{F}_{\rm tor}^{\rm d}+ (1-\tau)\mathcal{F}^c$ and is also the kernel of the endomorphism of $\mathcal{F}^{\rm d}$ that is induced by multiplication by $1+\tau$.
\item[(ii)] There exist canonical isomorphisms of $R$-modules
 \[ \mathcal{F}_{\rm tv}^{\rm d}/\mathcal{F}^{\rm d}_{\rm tor}\cong (1-\tau)\mathcal{F}^{\rm c} \cong \widehat{\ZZ}(1).\]
The first of these is induced by the equality $\mathcal{F}_{\rm tv}^{\rm d} =\mathcal{F}_{\rm tor}^{\rm d}+ (1-\tau)\mathcal{F}^c$ in claim (i) and the second sends $\Phi^{1-\tau}$ to the topological generator $(\zeta_m)_m$ of $\widehat{\ZZ}(1)$.
\item[(iii)] If $\mathcal{F}^*$ denotes either $\mathcal{F}^{\rm d}$ or $\mathcal{F}^{\rm sd}$, then the $R$-module homomorphism
\[ \mathcal{F}^*_{\rm tf}/\mathcal{F}^{\rm c} \to (1+\tau)\mathcal{F}^*/(1+\tau)\mathcal{F}^{{\rm c}}\]
that is induced by multiplication by $1+\tau$ is bijective.
\end{itemize}
\end{theorem}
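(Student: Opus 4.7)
The plan is to prove (i) first, from which (ii) follows using the torsion-freeness of $\mathcal{F}^{\rm c}$ (established in \cite{Seo4}) together with an explicit calculation, and (iii) then follows from (i) by a short diagram chase.

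For the identification $\mathcal{F}^{\rm d}_{\rm tv}=\ker(1+\tau)$ in (i), the inclusion $\mathcal{F}^{\rm d}_{\rm tv}\subseteq\ker(1+\tau)$ is immediate from $\zeta\cdot\tau(\zeta)=|\zeta|^2=1$ for any root of unity $\zeta$; the reverse uses Kronecker's theorem, since if $f^{1+\tau}=1$ then $f(n)^\tau=f(n)^{-1}$, which forces $|f(n)|_\sigma=1$ at every archimedean place $\sigma$, and for $n=p^k$ the fact that the unique prime of $\QQ(p^k)$ above $p$ is fixed by $\tau$ forces $v_p(f(n))=0$, reducing the $p$-unit case to the unit case. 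The inclusions $(1-\tau)\mathcal{F}^{\rm c}\subseteq\ker(1+\tau)$ (from $(1+\tau)(1-\tau)=0$ in $R$) and $\mathcal{F}^{\rm d}_{\rm tor}\subseteq\mathcal{F}^{\rm d}_{\rm tv}$ are trivial.

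The substantive content of (i) is the inclusion $\mathcal{F}^{\rm d}_{\rm tv}\subseteq\mathcal{F}^{\rm d}_{\rm tor}+(1-\tau)\mathcal{F}^{\rm c}$. My plan is to first identify $(1-\tau)\mathcal{F}^{\rm c}$ with $\widehat{\ZZ}(1)$ by an explicit cyclic-module calculation and then to construct a canonical map $\alpha\colon\mathcal{F}^{\rm d}_{\rm tv}\to\widehat{\ZZ}(1)$ whose restriction to $(1-\tau)\mathcal{F}^{\rm c}$ matches this identification and whose kernel equals $\mathcal{F}^{\rm d}_{\rm tor}$. For the identification, writing an element of $(1-\tau)\mathcal{F}^{\rm c}$ as $\Phi^{(1-\tau)r}$ for $r\in R$, one computes $\Phi^{(1-\tau)r}(m)=(-\zeta_m)^{r_m}=(-1)^{e(r_m)}\zeta_m^{\chi(r_m)}$, where $e$ and $\chi$ denote respectively the augmentation and the cyclotomic-character sum on $R_m$; a case analysis modulo $m$ shows the annihilator of $\Phi^{1-\tau}$ in $R$ reduces (using $R_2=\ZZ$ to force $e(r)$ even whenever the $\chi$-conditions hold) to $\ker(\chi\colon R\to\widehat{\ZZ})$, which is exactly the annihilator of $(\zeta_m)_m$ in $\widehat{\ZZ}(1)$, giving the desired cyclic $R$-module isomorphism sending $\Phi^{1-\tau}$ to $(\zeta_m)_m$. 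To define $\alpha$ on all of $\mathcal{F}^{\rm d}_{\rm tv}$ I would use the values $f(p^k)$: these are norm-compatible in $W_{p^k}$ by the first distribution relation in~(\ref{basic facts}) and, after a suitable normalization extracting a $\{\pm 1\}$-sign factor, determine a $p$-adic integer $\alpha_p(f)\in\ZZ_p$ that assembles into $\alpha(f)\in\prod_p\ZZ_p\cong\widehat{\ZZ}(1)$ and agrees with the cyclic identification on $(1-\tau)\mathcal{F}^{\rm c}$. The main obstacle is the inclusion $\ker\alpha\subseteq\mathcal{F}^{\rm d}_{\rm tor}$: given $f\in\ker\alpha$, the construction forces $f(p^k)\in\{\pm 1\}$ at every prime power, and using Seo's identification $\mathcal{F}^{\rm d}_{\rm tor}=\mathcal{D}$ one multiplies by a product of Coleman distributions $\delta_\Pi$ to arrange $f'(p^k)=1$ at every prime power; the remaining rigidity statement $f'=1$ is non-routine, because the set of prime powers is not cofinal in $\mathbb{N}^*$ and so Lemma~\ref{useful 1} does not apply directly, but I expect this can be overcome by iterating~(\ref{basic facts}) together with Lemma~\ref{useful 2} to propagate triviality from prime-power to arbitrary levels.

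With (i) proved, claim (ii) is essentially formal: the first isomorphism is the composite $\mathcal{F}^{\rm d}_{\rm tv}/\mathcal{F}^{\rm d}_{\rm tor}=(\mathcal{F}^{\rm d}_{\rm tor}+(1-\tau)\mathcal{F}^{\rm c})/\mathcal{F}^{\rm d}_{\rm tor}\cong(1-\tau)\mathcal{F}^{\rm c}$, the latter isomorphism using $(1-\tau)\mathcal{F}^{\rm c}\cap\mathcal{F}^{\rm d}_{\rm tor}=0$ (since $\mathcal{F}^{\rm c}$ is torsion-free), and the second is the cyclic isomorphism constructed above. For (iii), surjectivity is immediate; for injectivity, if $(1+\tau)f=(1+\tau)g$ with $g\in\mathcal{F}^{\rm c}$ then $f-g\in\ker(1+\tau)=\mathcal{F}^{\rm d}_{\rm tv}$, so (i) gives $f-g=h+(1-\tau)k$ with $h\in\mathcal{F}^{\rm d}_{\rm tor}$ and $k\in\mathcal{F}^{\rm c}$, and since $g+(1-\tau)k\in\mathcal{F}^{\rm c}$ we conclude $[f]=[h]=0$ in $\mathcal{F}^*_{\rm tf}/\mathcal{F}^{\rm c}$; the case $\mathcal{F}^*=\mathcal{F}^{\rm sd}$ follows automatically because $(1-\tau)k\in\mathcal{F}^{\rm c}\subseteq\mathcal{F}^{\rm sd}$ forces $h\in\mathcal{F}^{\rm sd}$.
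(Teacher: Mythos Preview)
Your treatment of $\mathcal{F}^{\rm d}_{\rm tv}=\ker(1+\tau)$, of claim (ii), and of claim (iii) is fine and matches the paper's argument in spirit. The gap is in the heart of claim (i), namely the inclusion $\mathcal{F}^{\rm d}_{\rm tv}\subseteq \mathcal{F}^{\rm d}_{\rm tor}+(1-\tau)\mathcal{F}^{\rm c}$, and you have identified it yourself: you define $\alpha$ using only the values $f(p^k)$ at prime powers and then need to show that $\alpha(f)=0$ forces $f\in\mathcal{F}^{\rm d}_{\rm tor}$. Your proposed fix does not work. Lemma~\ref{useful 2} concerns $\mathcal{F}^{{\rm d},+}$ (totally-positive-valued distributions), not torsion-valued ones, so it gives you nothing here; and the distribution relations alone do not propagate triviality from prime powers upward. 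For instance, if $f(p)=1$ and $f(q)=1$ then ${\rm N}^{pq}_p(f(pq))=f(p)^{1-\sigma_q}=1$ only says that the $\mu_p$-component of $f(pq)$ lies in the kernel of multiplication by $q-1$, which is not trivial when $p\mid q-1$. So the prime-power data genuinely underdetermines $f$ at composite levels, and there is no evident rigidity lemma available to close this.

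The paper's route is different in kind: rather than extracting a map from prime-power values, it works at \emph{all} levels $n\in\mathbb{N}^*$ at once. Since $f(n)\in\langle -\zeta_n\rangle$ (Lemma~\ref{reduction to torsion lemma}), one writes $f(n)=(-\zeta_n)^{r'_n}$ and proves (Proposition~\ref{inverse limit prop}) that the induced classes $(r_n(1-\tau))_n$ are compatible in $\varprojlim_n R_n(1-\tau)/\mathcal{T}^*_n(1-\tau)$; the compatibility across a new prime $\ell\nmid n$ is the delicate step and uses a limit argument in the tower $\QQ(nq^b)$ for a prime $q\mid n$. One then shows (Lemma~\ref{surjective lemma}(iii)) that the diagonal $R(1-\tau)\to\varprojlim_n R_n(1-\tau)/\mathcal{T}^*_n(1-\tau)$ is surjective, so some $r\in R$ lifts the system, and then $f\cdot\Phi^{-(1-\tau)r}$ has values $(-\zeta_n)^{t_n}$ with $t_n\in\mathcal{T}^*_n$, hence lies in $\{\pm 1\}$ for every $n$, which is exactly membership in $\mathcal{F}^{\rm d}_{\rm tor}$. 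The point is that the ``rigidity'' you are missing is replaced by the surjectivity statement for the full inverse limit, and that surjectivity (unlike your prime-power reduction) is provable because the transition maps $\mathcal{T}^*_m\to\mathcal{T}^*_n$ are surjective on the cofinal set $\mathbb{N}(4)$.
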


This result will play  a key role in the proof of Theorem \ref{main result}. After establishing several preliminary results concerning various inverse limits, it will be proved in \S\ref{proof of torsion}.

\subsection{}In the sequel, for each $n$ in $\mathbb{N}^*$ we write $\mathcal{T}_n$ for the annihilator of $-\zeta_n$ in $R_n$ and set
\[ \mathcal{T}^\ast_n := \begin{cases} \mathcal{T}_{n}, &\text{ if $n$ is even}\\
    \mathcal{T}_{2n}, &\text{ if $n$ is odd.}\end{cases}\]
We note, in particular, that if $n$ is odd, then $\mathcal{T}_n$ is a submodule of $\mathcal{T}^\ast_n$ of index $2$ (and that $n \in \mathcal{T}^\ast_n\setminus \mathcal{T}_n$).

\begin{lemma}\label{surjective lemma}\

\begin{itemize}
\item[(i)] For any natural number $n$ and prime $\ell$, one has
 $\pi^{n\ell}_{n}({\rm Ann}_{R_{n\ell}}(\mu_{n\ell})) \subseteq {\rm Ann}_{R_n}(\mu_n)$, with equality unless $\ell = 2$ and $n$ is odd.
\item[(ii)] For any $n$ in $\mathbb{N}^*$ and any $m$ in $\mathbb{N}(n)$ one has
 $\pi^m_{n}(\mathcal{T}^*_m) \subseteq \mathcal{T}^*_n$, with equality if $n\in \mathbb{N}(4)$.
\item[(iii)] The sequence
\[ 0 \to \varprojlim_n\mathcal{T}^*_n(1-\tau) \to R(1-\tau) \to \varprojlim_n R_n(1-\tau)/\mathcal{T}_n^*(1-\tau)\to 0\]
is exact, where the inverse limits are taken with respect to the transition maps induced by $\pi^m_n$, the second arrow denotes the natural inclusion and the third the natural diagonal map.
\end{itemize}
 \end{lemma}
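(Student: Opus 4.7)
For part (i), set $\epsilon_n : R_n \to \ZZ/n\ZZ$ to be the $\ZZ$-linear extension of the cyclotomic character $\chi_n : G_n \to (\ZZ/n\ZZ)^\times$. Since $R_n$ acts on $\zeta_n$ by $\zeta_n^r = \zeta_n^{\epsilon_n(r)}$, one has $\Ann_{R_n}(\mu_n) = \ker \epsilon_n$, and the compatibility $\epsilon_n \circ \pi^{n\ell}_n \equiv \epsilon_{n\ell} \pmod{n}$ gives the containment at once. For the equality, given any $s \in \ker \epsilon_n$ and any lift $r_0 \in R_{n\ell}$ of $s$, one has $\epsilon_{n\ell}(r_0) \in n\ZZ/n\ell\ZZ$, so the task reduces to showing $\epsilon_{n\ell}(\ker \pi^{n\ell}_n) = n\ZZ/n\ell\ZZ$. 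Since $\ker \pi^{n\ell}_n$ is the $R_{n\ell}$-ideal generated by $\{g'-1 : g' \in G^{n\ell}_n\}$ and $\epsilon_{n\ell}(g'-1) = \chi_{n\ell}(g')-1 \in n\ZZ/n\ell\ZZ$, it suffices to show that whenever $G^{n\ell}_n$ is non-trivial, the $R_{n\ell}$-orbit of a non-zero element $\chi_{n\ell}(g')-1$ spans $n\ZZ/n\ell\ZZ \cong \ZZ/\ell\ZZ$. This follows because $\chi_{n\ell}$ identifies $G_{n\ell}$ with $(\ZZ/n\ell\ZZ)^\times$, which surjects under reduction modulo $\ell$ onto $(\ZZ/\ell\ZZ)^\times$. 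A quick check using the conductor formula then shows that $G^{n\ell}_n$ is trivial precisely when $\ell = 2$ and $n$ is odd, accounting for the unique exceptional case.

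For part (ii), direct calculations of orders yield the uniform description $\mathcal{T}^*_n = \epsilon_n^{-1}\bigl(N(n)\ZZ/n\ZZ\bigr)$, where $N(n) = n$ when $n$ is odd or $4 \mid n$, and $N(n) = n/2$ when $n \equiv 2 \pmod 4$: indeed, $-\zeta_{2n}$ is a primitive $n$-th root when $n$ is odd, $-\zeta_n$ is a primitive $n$-th root when $4 \mid n$, and $-\zeta_n$ is a primitive $(n/2)$-th root when $n \equiv 2 \pmod{4}$. A routine check in each parity combination shows $N(n) \mid N(m)$ whenever $n \mid m$; combined with the $\epsilon$-compatibility this yields $\pi^m_n(\mathcal{T}^*_m) \subseteq \mathcal{T}^*_n$. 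For the equality when $4 \mid n$, note that $4 \mid m$ too, so $\mathcal{T}^*_n = \ker \epsilon_n$ and $\mathcal{T}^*_m = \ker \epsilon_m$; factoring $m/n$ into primes and applying part (i) at each intermediate step then gives the claim, since every intermediate base is divisible by $4$ (hence even), so the exceptional case in (i) never arises.

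For part (iii), apply the inverse-limit exact sequence to the tautological short exact sequence $0 \to \mathcal{T}^*_n(1-\tau) \to R_n(1-\tau) \to R_n(1-\tau)/\mathcal{T}^*_n(1-\tau) \to 0$. The subset $\{n \in \mathbb{N}^* : 4 \mid n\}$ is cofinal in $\mathbb{N}^*$ under divisibility, and on this cofinal subsystem part (ii) gives surjective transition maps $\pi^m_n : \mathcal{T}^*_m \twoheadrightarrow \mathcal{T}^*_n$; since multiplication by $1-\tau$ commutes with $\pi^m_n$ and preserves surjectivity, the Mittag--Leffler criterion yields $\varprojlim^1 \mathcal{T}^*_n(1-\tau) = 0$. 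The analogous argument identifies $\varprojlim R_n(1-\tau)$ with $R(1-\tau)$, and the desired exact sequence follows. The principal technical point is the residue computation underlying (i); once this is in place, both (ii) and (iii) reduce to essentially formal manipulations.
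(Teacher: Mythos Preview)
Your proof is correct and follows essentially the same approach as the paper. The only difference is notational: you work additively with the extended cyclotomic character $\epsilon_n: R_n \to \ZZ/n\ZZ$, while the paper works multiplicatively with the map $\theta_n: R_n \to \mu_n$ sending $1 \mapsto \zeta_n$; both reduce part (i) to showing that the image of $\ker(\pi^{n\ell}_n)$ is all of $\mu_\ell$ (equivalently $n\ZZ/n\ell\ZZ$), identify $\mathcal{T}^*_n$ with the appropriate annihilator for (ii), and invoke Mittag--Leffler on the cofinal subset $\mathbb{N}(4)$ for (iii).
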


\begin{proof} To prove claim (i) we note first that if $\ell = 2$ and $n$ is odd, then $\QQ(n\ell) = \QQ(n)$ whilst $\mu_n = (\mu_{n\ell})^2$ and so ${\rm Ann}_{R_{n\ell}}(\mu_{n\ell})$ is a subgroup of ${\rm Ann}_{R_n}(\mu_n)$ of index $2$.

To deal with the general case we set $m:= n\ell$ and consider the exact commutative diagram of $R_m$-modules
\[ \begin{CD}
0 @> >> {\rm Ann}_{R_m}(\mu_m) @> >> R_m @> \theta_m >> \mu_m @> >> 0\\
& & @V VV @V \pi^m_{n} VV @VV x\mapsto x^{\ell} V \\
0 @> >> {\rm Ann}_{R_n}(\mu_n) @> >> R_n @> \theta_n >> \mu_n @> >> 0,\end{CD}\]
where $\theta_m(1) = \zeta_m$ and $\theta_n(1) = \zeta_n$.

Then $\pi^m_n$ is surjective and so, by applying the Snake Lemma to this diagram, we find it is enough to prove that $\theta_m(\ker(\pi^m_{n})) = \mu_{\ell}$.

To do this we note $\ker(\pi^m_{n})$ is generated over $R_m$ by the set $\{\sigma-1: \sigma \in G^m_n\}$ and that an element $\sigma$ of $G_m$ belongs to $G^m_n$ if and only if $\sigma(\zeta_m) = \zeta_m^{a_\sigma}$ for an integer $a_\sigma$ with $a_\sigma \equiv 1$ (mod $n$).

In particular, if $\ell$ divides $n$, then there exists an element $\sigma$ of $G_n^m$ with $a_\sigma = 1 + n$ and $\theta_m(\sigma-1)$ is equal to the generator $\zeta_m^{n} = \zeta_\ell$ of $\mu_\ell$.

If $\ell$ is prime to $n$, then the first observation allows us to assume that $\ell$ is odd. In this case there are at least $\ell-2$ integers $a$ with $1\le a < \ell$  for which $1+a\cdot n$ is prime to $m$ and, for any such $a$, the corresponding element $\sigma$ of $G^m_n$ is such that $\theta_m(\sigma-1)= \zeta_m^{a\cdot n} = \zeta_\ell^a$ is a generator of $\mu_\ell$. This proves claim (i).

To prove claim (ii) we can reduce to the case $m = n\ell$ with $\ell$ prime. Then, after noting that for each natural number $t$ one has
\[ \mathcal{T}^*_t := \begin{cases} {\rm Ann}_{R_t}(\mu_{t/2}), &\text{ if $t \equiv 2$ (mod $4$),}\\
 {\rm Ann}_{R_t}(\mu_t), &\text{ otherwise,}\\
                                    \end{cases}\]
the claimed result follows directly from claim (i).
%
%
%

To prove claim (iii) we use the tautological short exact sequences
\[ 0 \to \mathcal{T}^*_n(1-\tau) \to R_n(1-\tau) \to R_n(1-\tau)/\mathcal{T}^*_n(1-\tau) \to 0.\]
As $n$ varies these sequences are compatible with the transition morphisms that are induced by $\pi^m_n$ (and the inclusions of claim (ii)). Hence, by passing to the inverse limit we obtain an exact sequence of $R$-modules
\begin{equation*}\label{tvd es} {\varprojlim}_n\mathcal{T}^*_n(1-\tau) \to R(1-\tau) \to {\varprojlim}_n R_n(1-\tau)/\mathcal{T}^*_n(1-\tau)\to {\varprojlim}_n^1\mathcal{T}^*_n(1-\tau),\end{equation*}
where we have identified $\varprojlim_n R_n(1-\tau)$ with $R(1-\tau)$ in the obvious way.

To deduce claim (iii) it thus suffices to note  that the derived limit $\varprojlim_n^1\mathcal{T}^*_n(1-\tau)$ vanishes since it can be computed by restricting to the cofinal subset $\mathbb{N}(4)$ for which claim (ii) implies that the transition maps $\mathcal{T}^*_m(1-\tau) \to \mathcal{T}^*_n(1-\tau)$ are surjective.
\end{proof}

\subsection{}In this section we consider functions $f$ in $\mathcal{F}^{\rm d}$ with the property that for all $n$ in $\mathbb{N}^*$ one has
\begin{equation}\label{key point} f(n)^{1+\tau} = 1.\end{equation}

We recall that $\QQ(n)^+$ denotes the maximal totally real subfield of $\QQ(n)$ and write $E(n)^+$ for the group of algebraic units $E(n)\cap \QQ(n)^+$ in $\QQ(n)^+$. We also recall that $W_n$ denotes the torsion subgroup of $E(n)$.

\begin{lemma}\label{reduction to torsion lemma} If $f$  is any distribution with property (\ref{key point}), then for every $n$ in $\mathbb{N}^*$ the element $f(n)$ belongs to the group $\langle -\zeta_n\rangle$ generated by $-\zeta_n$.

In particular, a distribution $f$ belongs to $\mathcal{F}_{\rm tv}^{\rm d}$ if and only it has property (\ref{key point}).
\end{lemma}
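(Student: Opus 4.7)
The plan is to first establish the containment $f(n)\in\langle -\zeta_n\rangle$ under the hypothesis $f(n)^{1+\tau}=1$, and then deduce the ``in particular'' statement as a formal consequence. So assume throughout that $f\in\mathcal{F}^{\rm d}$ satisfies (\ref{key point}) for every $n$.

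First I would show that $f(n)$ is an algebraic unit in $E(n)$, not merely an element of the possibly larger group $E(n)'$ from (\ref{unit prop}). When $n$ is not a prime power this is immediate from (\ref{unit prop}). When $n=p^a$, the field $\QQ(n)$ has a unique prime $\mathfrak{p}$ above $p$, and $\mathfrak{p}$ is fixed by $\tau$ since $G_n$ is abelian; hence $v_\mathfrak{p}(f(n))=v_\mathfrak{p}(\tau f(n))$, and applying $v_\mathfrak{p}$ to (\ref{key point}) forces $v_\mathfrak{p}(f(n))=0$, so $f(n)\in E(n)$. Next, for every complex embedding $\sigma$ of $\QQ(n)$ the abelianness of $G_n$ gives $\sigma(\tau x)=\overline{\sigma x}$ for all $x\in\QQ(n)$, so applying $\sigma$ to (\ref{key point}) yields $|\sigma(f(n))|^2=1$, and Kronecker's theorem implies $f(n)\in W_n$.

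To upgrade this to $f(n)\in\langle -\zeta_n\rangle$ I would argue by cases on $n$ modulo $4$. When $n\ge 3$ is odd, or when $4\mid n$, a direct check shows $-\zeta_n$ generates all of $W_n$ and there is nothing further to do. The remaining situation is $n\equiv 2\pmod 4$, where $\langle -\zeta_n\rangle$ coincides with the index-two subgroup $\mu_{n/2}$ of $W_n=\mu_n$. For $n=2n'$ with $n'\ge 3$ odd, the second relation in (\ref{basic facts}) (applied with $m=n'$ and $\ell=2$) gives $f(n)=f(n')^{1-\sigma_2}$. The odd case already gives $f(n')\in W_{n'}=\mu_n$, so $f(n')^{n'}\in\mu_2$ is fixed by $\sigma_2$, and raising the displayed equality to the $n'$th power then yields $f(n)^{n'}=1$, i.e., $f(n)\in\mu_{n'}=\langle -\zeta_n\rangle$. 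The boundary case $n=2$ has to be handled separately using the first relation in (\ref{basic facts}), $f(2)={\rm N}^4_2(f(4))$: by the $4\mid n$ case $f(4)\in\mu_4$, and a direct computation gives ${\rm N}^4_2(\mu_4)=\{1\}=\langle -\zeta_2\rangle$.

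The ``in particular'' statement is then formal: one direction is immediate from what has just been proved since $\langle -\zeta_n\rangle$ consists of torsion elements, and for the converse note that if $f\in\mathcal{F}_{\rm tv}^{\rm d}$ then each $f(n)$ is a root of unity, so $\tau f(n)=f(n)^{-1}$ and (\ref{key point}) is automatic. I anticipate the main source of fiddliness to be the case analysis identifying $\langle -\zeta_n\rangle$ inside $W_n$, and in particular the boundary case $n=2$, where the distribution relation $f(2n')=f(n')^{1-\sigma_2}$ used for other $n\equiv 2\pmod 4$ degenerates and must be replaced by a norm relation.
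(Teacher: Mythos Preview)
Your proof is correct and reaches the same conclusion by a genuinely different route than the paper.

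The paper proves $f(n)\in W_n$ by invoking the structure result \cite[Cor.~4.13]{wash}: in the prime-power case it writes $f(n)=w\cdot u$ with $w\in W_n$ and $u\in E(n)^+$, and deduces $u=1$ from $u^2=1$; in the non-prime-power case the argument of loc.\ cit.\ gives $f(n)=(1-\zeta_n)^a w u$ with $a\in\{0,1\}$, and the paper must first argue that $a=0$ (via the fact that $-\zeta_n$ is not a square in $E(n)$) before concluding as above. Your use of Kronecker's theorem bypasses all of this: once $f(n)\in E(n)$, the identity $\sigma(\tau x)=\overline{\sigma(x)}$ (valid for every embedding since $G_n$ is abelian) and (\ref{key point}) give $|\sigma(f(n))|=1$ at every archimedean place, so $f(n)$ is a root of unity. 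This is cleaner and avoids the case split between prime-power and composite $n$, as well as the auxiliary square argument. The subsequent case analysis identifying $\langle -\zeta_n\rangle$ inside $W_n$ and the handling of $n\equiv 2\pmod 4$ (including the boundary case $n=2$) are essentially the same in both proofs.
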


\begin{proof} It is clearly enough to prove the first assertion and to do this we first consider the case that $n = p^d$ for some prime $p$.

In this case the unique prime ideal of $E(n)$ above $p$ is stable under the action of $\tau$ and so the given equality
(\ref{key point}) combines with the containment (\ref{unit prop}) to imply that $f(n)$ belongs to $E(n)$.

From the result of \cite[Cor. 4.13]{wash} we can therefore deduce that $f(n)$ has the form $w\cdot u$ with $w$ in $W_n$ and $u$ an element of $E(n)^+$ that is either trivial or has infinite order. Given this, (\ref{key point}) implies that
\[ 1 = f(n)^{1+\tau} = w^{1+\tau}u^{1+\tau} = u^2\]
and hence that $u=1$ and so $f (n)\in W_n$.

Now if $n\not= 2$, then $W_n = \langle -\zeta_n\rangle$ and so we obtain the required claim. In addition, if $n =2$, then $f(2) = {\rm N}^4_2(f(4)) \in {\rm N}^4_2(\mu_4) = \{1\} = \langle -\zeta_2\rangle$.

We can therefore assume that $n$ is not a prime power. In this case (\ref{unit prop}) implies directly that $f(n)$ belongs to $E(n)$ and then the argument of \cite[Cor. 4.13]{wash} implies $f(n)$ can be written as $(1-\zeta_n)^awu$ with $a\in \{0,1\}$, $w\in W_n$ and $u\in E(n)^+$. Then the equality (\ref{key point}) implies that
\[ 1 = (1-\zeta_n)^{a(1+\tau)}u^2 = \varepsilon_n^au^2.\]

This implies that $\varepsilon_n^a$, and hence also
\[ (-\zeta_n)^a = (1-\zeta_n)^{a(1-\tau)} = (1-\zeta_n)^{a(1+\tau)}(1-\zeta_n)^{-2a\tau} = \varepsilon_n^a(1-\zeta_n^{-1})^{-2a},\]
is a square in $E(n)$. Since the argument of loc. cit. implies $-\zeta_n$ is not a square in $E(n)$ we deduce that $a=0$.

By using the same argument as above we can then deduce that $f(n)= wu$ must belong to $W_n$.

If $n \not\equiv 2$ (mod $4$), then $W_n = \langle -\zeta_n\rangle$ and we are done. In addition, if $n = 2n'$ with $n'$ odd, then
\[ f(n) = f(n')^{1-\sigma_2} \in \langle (-\zeta_{n'})^{1-\sigma_2}\rangle = \langle\zeta_{n'}\rangle = \langle-\zeta_n\rangle,\]
as required.\end{proof}

If $f$ is any distribution with property (\ref{key point}), then Lemma \ref{reduction to torsion lemma} implies that for each $n\in \mathbb{N}^*$ there exists an element $r'_{n} = r'_{f,n}$ of $R_n$ that is well-defined modulo $\mathcal{T}_n$ and is such that
\begin{equation}\label{def eq} f(n) = (1-\zeta_n)^{r'_n(1-\tau)}.\end{equation}

In particular, since $\mathcal{T}_n\subseteq \mathcal{T}_n^*$, the image $r_n = r_{n,f}$ in $R_n/\mathcal{T}_n^*$ of any such element $r'_n$ is uniquely determined by $f$ and $n$.

\begin{proposition}\label{inverse limit prop} For any $f$ in $\mathcal{F}^{\rm d}$ with property (\ref{key point}) the element $(r_n(1-\tau))_{n\in \mathbb{N}^\ast}$ belongs to the limit $\varprojlim_n R_n(1-\tau)/\mathcal{T}_n^*(1-\tau)$ that occurs in Lemma \ref{surjective lemma}(iii).
\end{proposition}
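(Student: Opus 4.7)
First, Lemma \ref{reduction to torsion lemma} implies that for each $n \in \mathbb{N}^\ast$ there is an integer $a_n$ with $f(n)=(-\zeta_n)^{a_n}$, so I may take $r'_n=a_n\in\ZZ\subset R_n$, making $r_n$ the class of $a_n$ in $R_n/\mathcal{T}_n^*$. The identity $(-\zeta_n)^{1+\tau}=(-\zeta_n)(-\zeta_n^{-1})=1$ shows that $1+\tau$ lies in $\mathcal{T}_n\subseteq\mathcal{T}_n^*$, hence the kernel $(1+\tau)R_n$ of the surjection $R_n\twoheadrightarrow R_n(1-\tau)$ is contained in $\mathcal{T}_n^*$. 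This yields a canonical identification $R_n(1-\tau)/\mathcal{T}_n^*(1-\tau)\cong R_n/\mathcal{T}_n^*$, under which $r_n(1-\tau)$ corresponds to the class of $a_n$. The claim of the proposition thus becomes the assertion that $a_m-a_n\in\mathcal{T}_n^*\cap\ZZ$ for every $n\mid m$, which by transitivity reduces to the case $m=n\ell$ with $\ell$ prime.

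If $\ell\mid n$, an explicit calculation of the conjugates of $\zeta_{n\ell}$ over $\QQ(n)$ gives $\mathrm{N}^{n\ell}_n(-\zeta_{n\ell})=-\zeta_n$, so the first relation of (\ref{basic facts}) reads $(-\zeta_n)^{a_{n\ell}}=(-\zeta_n)^{a_n}$, whence $a_{n\ell}-a_n\in\mathcal{T}_n\cap\ZZ\subseteq\mathcal{T}_n^*\cap\ZZ$.

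If $\ell\nmid n$, then $\mathrm{N}^{n\ell}_n(-\zeta_{n\ell})=\zeta_n^{1-\sigma_\ell}$ and the second relation of (\ref{basic facts}) gives $\zeta_n^{(a_{n\ell}-a_n)(1-\sigma_\ell)}=1$, equivalently $n\mid(a_{n\ell}-a_n)(\ell-1)$. This alone is too weak—indeed it is vacuous when $\ell\equiv 1\pmod n$. To circumvent this, I pass to the auxiliary level
\[ n^*\;:=\;n\cdot\prod_{q\mid n}q^{v_q(\ell-1)}, \]
whose prime factors are exactly those of $n$. A direct computation shows $\gcd(n^*,\ell-1)=\prod_{q\mid n}q^{v_q(\ell-1)}$, so $n^*/\gcd(n^*,\ell-1)=n$, and applying the analogous relation at level $n^*$ yields the sharp containment $a_{n^*\ell}-a_{n^*}\in n\ZZ\subseteq\mathcal{T}_n^*\cap\ZZ$. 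I then transport this back to levels $n$ and $n\ell$ along two chains $n\to n^*$ and $n\ell\to n^*\ell$, each realised as a sequence of prime steps using only primes dividing $n$ (which divide every intermediate level). At each such step the case $\ell\mid n$ applies, and since $\mathcal{T}_m\cap\ZZ\subseteq\mathcal{T}_n^*\cap\ZZ$ whenever $n\mid m$ (a consequence of Lemma \ref{surjective lemma}(ii) applied to integer parts), the telescoping sums $a_{n^*}-a_n$ and $a_{n^*\ell}-a_{n\ell}$ both lie in $\mathcal{T}_n^*\cap\ZZ$. Combining the three containments via the decomposition $a_{n\ell}-a_n=(a_{n\ell}-a_{n^*\ell})+(a_{n^*\ell}-a_{n^*})+(a_{n^*}-a_n)$ yields the required conclusion.

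The main obstacle is the near-triviality of $\sigma_\ell$ on $\QQ(n)$ that can make the direct relation in the case $\ell\nmid n$ uninformative; the construction of $n^*$ is the essential device that absorbs this difficulty by forcing the relation at the enlarged level to give exactly the congruence modulo $n$ that is needed.
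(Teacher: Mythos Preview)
Your proof is correct and takes a genuinely different route from the paper's in the delicate case $\ell\nmid n$.

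The paper treats that case by fixing each prime divisor $q$ of $n$ and climbing the infinite tower $\{nq^b\}_{b\ge 0}$ (and simultaneously $\{n\ell q^b\}_{b\ge 0}$): the distribution relations show that the discrepancy element $(-\zeta_n)^{r'_{n\ell}-r'_n}$, viewed inside $W_\infty=\varprojlim_b W_{n\ell q^b}$, is annihilated by $\ell-1$, and torsion-freeness of the maximal pro-$q$ quotient of $W_\infty$ then kills its $q$-part. Running over all $q\mid n$ yields $(-\zeta_n)^{r'_{n\ell}-r'_n}\in\{\pm 1\}$, which is exactly $r'_{n\ell}\equiv r'_n\pmod{\mathcal{T}_n^*}$. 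The paper also needs a separate argument for the degenerate step $n$ odd, $\ell=2$, where $\QQ(n\ell)=\QQ(n)$.

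Your device is instead purely finite-level: you replace $n$ by the single enlargement $n^*=n\cdot\prod_{q\mid n}q^{v_q(\ell-1)}$, calibrated so that the direct relation $n^*\mid(a_{n^*\ell}-a_{n^*})(\ell-1)$ already gives $n\mid(a_{n^*\ell}-a_{n^*})$, and then you descend from $n^*$ to $n$ (and from $n^*\ell$ to $n\ell$) along chains to which only the easy case $\ell\mid n$ applies. This avoids any appeal to inverse limits or torsion-freeness and, as a bonus, handles the case $n$ odd, $\ell=2$ uniformly (since then $\ell-1=1$ forces $n^*=n$, and the relation already reads $n\mid a_{2n}-a_n$). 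The paper's approach is perhaps more conceptual---it isolates the obstruction as a torsion element and kills it by a structural property of $W_\infty$---while yours is more elementary and more explicit.
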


\begin{proof} It suffices to show that for each $n \in \mathbb{N}^*$ and each prime $\ell$ one has
\[ \pi^{n\ell}_n(r'_{n\ell}) \equiv r'_{n}\,\, \text{ modulo }\,\, \mathcal{T}^*_n.\]

If $\ell$ divides $n$, then this is true since the first equality in (\ref{basic facts}) implies that
\begin{multline*} (-\zeta_n)^{r'_{n\ell}} = (1-\zeta_n)^{r'_{n\ell}(1-\tau)} = ({\rm N}^{n\ell}_n(1-\zeta_{n\ell}))^{r'_{n\ell}(1-\tau)}\\  =
{\rm N}^{n\ell}_{n}(f(n\ell))= f(n)
 = (1-\zeta_{n})^{r'_{n}(1-\tau)} = (-\zeta_n)^{r_n'}.\end{multline*}

If $n$ is odd and $\ell = 2$, then $\pi^{n\ell}_n$ is the identity map and so it is enough to show that $r'_{n\ell} \equiv r'_{n}$ (mod $\mathcal{T}^\ast_{n}$). But in this case one has $\zeta^2_{n\ell} = \zeta_n = (\zeta_{n}^{\sigma_2})^2$ so that $-\zeta_{n\ell} = \zeta_{n}^{\sigma_2}$ and hence (\ref{basic facts}) implies
\begin{multline*} (\zeta_{n})^{r'_{n\ell}\cdot\sigma_2} = (-\zeta_{n\ell})^{r'_{n\ell}} = (1-\zeta_{n\ell})^{r'_{n\ell}(1-\tau)}  = f(n\ell)\\ = f(n)^{1-\sigma_2}
 = (1-\zeta_{n})^{r'_{n}(1-\tau)(1-\sigma_2)} = (-\zeta_{n})^{r'_{n}(1-\sigma_2)} = (\zeta_{n})^{r'_{n}\cdot\sigma_2}.\end{multline*}
This shows that $r'_{n\ell} \equiv r'_{n}$ (mod $\mathcal{T}^*_{n}$), as required.

Finally, to deal with the case that $\ell$ is odd and prime to $n$ we fix a prime divisor $q$ of $n$. We note first that, as the natural number $b$ varies, the equality (\ref{basic facts}) implies that the elements
\[ (-\zeta_{nq^b})^{r'_{nq^b}} = (1-\zeta_{nq^b})^{r'_{nq^b}(1-\tau)} = f(nq^b)\]
and
\[ (-\zeta_{n\ell q^b})^{r'_{n\ell q^b}} = (1-\zeta_{n\ell q^b})^{r'_{n\ell q^b}(1-\tau)} = f(n\ell q^b)\]
form elements $c$ and $c'$ of $W_\infty := \varprojlim_{b \ge 1}W_{n\ell q^b}$, where the limit is taken with respect to the norms ${\rm N}^{n\ell q^{b'}}_{n\ell q^b}$ for $b' \ge b$.

In addition, (\ref{basic facts}) also implies that for each $b$ one has
%

%
\begin{multline*}  (-\zeta_{nq^b})^{r'_{nq^b}(\ell-1)} = (1-\zeta_{nq^b})^{(1-\tau)r'_{nq^b}(1-\sigma_\ell)\sigma^{-1}_\ell}  = f(nq^b)^{(1-\sigma_\ell)\sigma^{-1}_\ell}\\ = {\rm N}^{n\ell q^b}_{n q^b}(f(n\ell q^b))^{\sigma^{-1}_\ell}
 = ({\rm N}^{n\ell q^b}_{n q^b}(1-\zeta_{n\ell q^b}))^{r'_{n\ell q^b}(1-\tau)\sigma^{-1}_\ell}\\  = (1-\zeta_{nq^b})^{(\sigma^{-1}_\ell-1)r'_{n\ell q^b}(1-\tau)} = (-\zeta_{nq^b})^{r'_{n\ell q^b}(\ell-1)}.\end{multline*}
This implies that the element $c^{-1}c'$ of $W_\infty$ is annihilated by raising to the power $\ell-1$. Since the maximal pro-$q$ quotient of $W_\infty$ is torsion-free this shows that the order of $(-\zeta_{n})^{-r'_{n}+ r'_{n\ell}}$ is prime to $q$.

As $q$ is an arbitrary divisor of $n$ it therefore follows that $(-\zeta_{n})^{-r'_{n} + r'_{n\ell}}$ is equal to $1$ if $n$ is even and to either $\pm 1$ if $n$ is odd, and hence that $\pi^{n\ell}_n(r'_{n\ell}) \equiv r'_{n}$ (mod $\mathcal{T}^\ast_{n}$), as required.  \end{proof}

\subsection{}\label{proof of torsion} We are now ready to prove Theorem \ref{reduction result}.

The final assertion of claim (i) follows directly from the final assertion of Lemma \ref{reduction to torsion lemma} and it is also clear that $\mathcal{F}^{\rm d}_{\rm tv}$ contains both $\mathcal{F}^{\rm d}_{\rm tor}$ and $(1-\tau)\mathcal{F}^{\rm c}$. To prove claim (i) it is thus enough to show that $\mathcal{F}^{\rm d}_{\rm tv}$ is contained in $\mathcal{F}^{\rm d}_{\rm tor}+ (1-\tau)\mathcal{F}^{\rm c}$.

To do this we take $f$ in $\mathcal{F}^{\rm d}_{\rm tv}$ and use the elements $(r_n')_n$ that arise in the equality (\ref{def eq}). Taking account firstly of Proposition \ref{inverse limit prop}, and then of the exact sequence in Lemma \ref{surjective lemma}(iii), we deduce the existence of an element $r$ of $R$ such that, for each $n \in \mathbb{N}^*$, the image of $r(1-\tau)$ in $R_n(1-\tau)$ is equal to $r_n'(1-\tau) + t_n(1-\tau)$ for an element $t_n$ of $\mathcal{T}_n^*$. This implies
\begin{equation}\label{conceptual} f(n) = (1-\zeta_n)^{r'_n(1-\tau)} = (1-\zeta_n)^{r(1-\tau)}\cdot(1-\zeta_n)^{t_n(1-\tau)} = \Phi^{(1-\tau)r}(n)\cdot (-\zeta_n)^{t_n}.\end{equation}

In particular, since the annihilator $\mathcal{T}_n$ in $R_n$ of $-\zeta_n$ is a submodule of $\mathcal{T}_n^*$ of index at most two, this shows that the distribution $f\cdot \Phi^{-(1-\tau)r}$ has order at most two and hence belongs to $\mathcal{F}^{\rm d}_{\rm tor}$, as suffices to complete the proof of claim (i).

Since $\mathcal{F}^{\rm c}$ is torsion-free the intersection $\mathcal{F}^{\rm d}_{\rm tor}\cap \mathcal{F}^{\rm c}$ is trivial and so the first isomorphism stated in claim (ii) follows directly from the equality
$\mathcal{F}^{\rm d}_{\rm tv} = \mathcal{F}^{\rm d}_{\rm tor}+ (1-\tau)\mathcal{F}^{\rm c}$ that is proved in claim (i).

Write $\zeta$ for the topological generator $(\zeta_m)_m$ of $\widehat{\ZZ}(1)$. Then, to complete the proof of claim (ii), we need to show that the assignment 
\begin{equation}\label{exp iso} \Phi^{(1-\tau)r} \mapsto \zeta^r\end{equation}
for each $r$ in $R$ gives a well-defined isomorphism of $R$-modules $(1-\tau)\mathcal{F}^c \cong \widehat{\ZZ}(1)$.

To check this we note first that for any $r$ and $r'$ in $R$ one has
\begin{align*} \Phi^{(1-\tau)r} = \Phi^{(1-\tau)r'} &\Longleftrightarrow \Phi^{(1-\tau)(r-r')} = 1 \\
 &\Longleftrightarrow r-r' \in R \cap \prod_{n}\mathcal{T}_n\\
 &\Longleftrightarrow r-r' \in \varprojlim_n\mathcal{T}^*_n\\
 &\Longleftrightarrow \zeta^{r-r'} = 1.\end{align*}
Here the first equivalence is clear, the second follows from the fact that $\Phi^{1-\tau}(n) = -\zeta_n$ for all $n$, the third from the fact that
$R \cap \prod_{n}\mathcal{T}_n = \varprojlim_n\mathcal{T}^*_n$ since $\mathcal{T}_n = \mathcal{T}^*_n$ for all even $n$ and the final equivalence is true because $\mathcal{T}^*_n$ is equal to the annihilator in $R_n$ of $\zeta_n$ whenever $n$ is a multiple of $4$.

This shows that the assignment (\ref{exp iso}) is both well-defined and injective. The proof of claim (ii)  is then completed by noting this map is
 clearly also both surjective and a homomorphism of $R$-modules.

Next we note that the surjectivity of the map given in claim (iii) is clear since the torsion subgroup of $\mathcal{F}^{\rm d}$ is annihilated by $1+\tau$.

To prove claim (iii) it is thus enough to show that if $f$ is any element of $\mathcal{F}^*$ such that $f^{1+\tau}$ belongs to $(1+\tau)\mathcal{F}^{\rm c}$, then $f$ must belong to the subgroup $\mathcal{F}^*_{\rm tor} + \mathcal{F}^{\rm c}$.

Now, for any such $f$ there exists an element $r$ of $R$ with $f^{1+\tau} = (\Phi^r)^{1+\tau}$ and so the product distribution $f_1 := f\cdot \Phi^{-r}$ satisfies the condition (\ref{key point}).

By the argument in claim (i) this implies $f_1$ belongs to $\mathcal{F}^*_{\rm tor} + (1-\tau)\mathcal{F}^{\rm c}$ and hence that
 $f = f_1\cdot \Phi^r$ belongs to $\mathcal{F}^*_{\rm tor} + (1-\tau)\mathcal{F}^{\rm c} + \mathcal{F}^{\rm c} = \mathcal{F}^*_{\rm tor} + \mathcal{F}^{\rm c}$, as required.

This completes the proof of Theorem \ref{reduction result}.

\begin{remark}\label{more concept}{\em The equality (\ref{conceptual}) and isomorphism (\ref{exp iso}) combine to give a more conceptual proof of the equality $\mathcal{F}^{\rm d}_{\rm tor} = \mathcal{D}$ that was discussed in the Introduction and first proved by the second author in \cite{Seo3}. Specifically, the equality (\ref{conceptual}) shows that for any $f$ in $\mathcal{F}_{\rm tor}^{\rm d}$ one has $f=f_1\cdot f_2$ with $f_1 \in (1-\tau)\mathcal{F}^{\rm c}$ and $f_2\in \mathcal{F}^{\rm d}$ such that $f_2(n) = (-\zeta_n)^{t_n}$ for all $n$, where each $t_n$ belongs to $\mathcal{T}_n^*$ and so $f_2(n)\in \langle (-1)^n\rangle$. Thus, since $(1-\tau)\mathcal{F}^{\rm c}$ is torsion-free, it follows that $f=f_2$ is such that $f(n)$ belongs to $\{\pm 1\}$ and can be non-trivial only if $n$ is odd. Given this, it is then easy to check directly from the distribution relations (\ref{basic facts}) that $f$ must be a Coleman distribution $\delta_\Pi$ for a suitable set of odd prime numbers $\Pi$.}\end{remark}

\section{Distributions of prime level}\label{pd section}

In this section we consider distributions of prime level, as defined in \S\ref{distributions of level}.

\subsection{}\label{prime level section} For brevity we set $\overline{\mathcal{F}}^{\rm d} := (1+\tau)\mathcal{F}^{\rm d}$ and $\overline{\mathcal{F}}^{{\rm c}} := (1+\tau)\mathcal{F}^{\rm c}$ and for each $m$ in $\mathbb{N}^*$ also $\overline{\mathcal{F}}^{\rm d}_{(m)} := (1+\tau)\mathcal{F}^{\rm d}_{(m)}$.

We recall that $\iota_m$ denotes the natural restriction map $\mathcal{F}^{\rm d} \to \mathcal{F}^{\rm d}_{(m)}$ and we set $\mathcal{F}^{\rm c}_{(m)} := \iota_m(\mathcal{F}^{\rm c})$ and $\overline{\mathcal{F}}_{(m)}^{{\rm c}} := (1+\tau)\mathcal{F}_{(m)}^{\rm c}$. For each prime $p$ we then write
\[ \kappa_p: \overline{\mathcal{F}}^{\rm d}/\overline{\mathcal{F}}^{{\rm c}} \to
\overline{\mathcal{F}}^{\rm d}_{(p)}/\overline{\mathcal{F}}_{(p)}^{{\rm c}}\]
for the homomorphism that is induced by $\iota_p$.

Finally we recall the torsion-free subgroups $V(n)$ of $\QQ(n)^{+,\times}$ that are defined in \S\ref{totally positive section}.

In terms of this notation, we can now state the main result that we prove concerning distributions of prime level.

\begin{theorem}\label{key real} For each prime $p$ the following claims are valid.
\begin{itemize}
\item[(i)]  The homomorphism $\kappa_p$ is injective and its cokernel is annihilated by $1- \sigma_p$.
\item[(ii)] The quotient group $\overline{\mathcal{F}}^{\rm d}_{(p)}/\overline{\mathcal{F}}^{{\rm c}}_{(p)}$ is uniquely $p$-divisible.
\item[(iii)] The following conditions are equivalent.
\begin{itemize}
\item[(a)] The quotient group $\mathcal{F}_{\rm tf}^{\rm d}/\mathcal{F}^{{\rm c}}$ is uniquely $p$-divisible.
\item[(b)] If $f$ is any distribution in $\mathcal{F}^{{\rm d},+}$ with the property that $f(n) \in V(n)^{p}$ for all $n \in \mathbb{N}(p)$, then $f(n) \in V(n)^p$ for all $n$.
\item[(c)] There exists a natural number $t$ such that if $f$ is any distribution in $\mathcal{F}^{{\rm d},+}$ with the property that $f(n) \in V(n)^{p^t}$ for all $n \in \mathbb{N}(p)$, then $f(n) \in V(n)^p$ for all $n$.
\end{itemize}

 \end{itemize}
\end{theorem}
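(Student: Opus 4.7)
The plan is to handle parts (i), (ii), (iii) in sequence, each feeding into the next.

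For (i), injectivity of $\kappa_p$ follows from Lemma \ref{useful 1} applied to $\Sigma := \mathbb{N}(p)$: this set is cofinal in $\mathbb{N}^*$ and closed under powers, and every element of $\overline{\mathcal{F}}^{\rm d}$ lies in $\mathcal{F}^{{\rm d},+}$ because $(1+\tau)g$ has totally positive real values. To show the cokernel is annihilated by $1-\sigma_p$, I would, for any $h \in \mathcal{F}^{\rm d}_{(p)}$, explicitly construct a global distribution $\tilde{h}$ by the rule $\tilde{h}(n) := h(n)^{1-\sigma_p}$ when $p \mid n$ and $\tilde{h}(n) := {\rm N}^{pn}_n(h(pn))$ when $p \nmid n$. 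A four-way case check on the distribution relations (\ref{basic facts}) — splitting according to whether $p \mid n$ and whether $\ell = p$ — verifies that $\tilde{h}$ is indeed a distribution; its $(1+\tau)$-transform then lies in $\overline{\mathcal{F}}^{\rm d}$ and restricts on $\mathbb{N}(p)$ to $(1-\sigma_p)(h^{1+\tau})$, giving the desired containment $(1-\sigma_p)\overline{\mathcal{F}}^{\rm d}_{(p)} \subseteq \iota_p(\overline{\mathcal{F}}^{\rm d})$.

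For (ii), I apply Theorem \ref{first step}: every $h \in \mathcal{F}^{\rm d}_{(p)}$ satisfies $h(n) = \Phi(n)^{r_{h,n}}$ with $r_{h,n} \in \ZZ_{(p)} \otimes R_n$ for every $n \in \mathbb{N}(p)$ (since $p \mid n$ forces $\ZZ_{(n)} \subseteq \ZZ_{(p)}$). Taking $(1+\tau)$-parts and projecting to $R_n^+/I_n$ (which by Lemma \ref{useful 3} and Remark \ref{more general ann} is the precise module controlling the $\varepsilon_n$-relations), the reductions form a compatible inverse system as $n$ ranges over $\mathbb{N}(p)$; a Mittag-Leffler lifting argument then produces $r \in R$ with $r \equiv -r_{h,n}$ modulo $p$ in each $R_n^+/I_n$. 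The distribution $h \cdot \Phi^r$ therefore has values in $V(n)^p$ for every $n \in \mathbb{N}(p)$, and assembling the unique $p$-th roots — which exist by torsion-freeness of $V(n)$ — into a distribution, by the argument of Theorem \ref{reduction result}(iii), yields a preimage under multiplication by $p$ in $\overline{\mathcal{F}}^{\rm d}_{(p)}/\overline{\mathcal{F}}^{\rm c}_{(p)}$. Uniqueness of division by $p$ is established analogously: the unique $p$-th roots of a cyclotomic family themselves reassemble into a cyclotomic distribution via a parallel lifting argument.

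For (iii), combining part (i) with Theorem \ref{reduction result}(iii) produces a short exact sequence
\begin{equation*}
0 \to \mathcal{F}^{\rm d}_{\rm tf}/\mathcal{F}^{\rm c} \to \overline{\mathcal{F}}^{\rm d}_{(p)}/\overline{\mathcal{F}}^{\rm c}_{(p)} \to Q \to 0,
\end{equation*}
in which $Q$ is annihilated by $1-\sigma_p$, and by part (ii) the middle term is uniquely $p$-divisible. The snake lemma applied to multiplication by $p$ then shows that $\mathcal{F}^{\rm d}_{\rm tf}/\mathcal{F}^{\rm c}$ is uniquely $p$-divisible precisely when $Q[p] = 0$. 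Translating this condition into a statement about distribution values, via the uniqueness of $p$-th roots in the torsion-free modules $V(n)$ and the relation of $\overline{\mathcal{F}}^{\rm d}$ to $\mathcal{F}^{{\rm d},+}$, identifies it with condition (b), giving (a) $\Leftrightarrow$ (b). The implication (b) $\Rightarrow$ (c) is trivial (take $t = 1$), and (c) $\Rightarrow$ (b) follows from the same snake-lemma identification since $Q[p] \subseteq Q[p^t]$, so that $Q[p^t] = 0$ forces $Q[p] = 0$.

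The main obstacle is the Mittag-Leffler lifting in part (ii): ensuring that a compatible system of mod-$p$ residues of cyclotomic exponents genuinely lifts to an element of $R$ itself requires careful control over the transition maps $R_m^+/I_m \to R_n^+/I_n$ in the inverse system indexed by $\mathbb{N}(p)$, for which the explicit description of the annihilator ideals $I_n$ given by Lemma \ref{useful 3} plays a central role.
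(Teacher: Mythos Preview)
Your approach to (i) is correct and matches the paper. For (ii), your sketch is in the right spirit---the paper also reduces to a lifting problem---but organizes it differently: it introduces an auxiliary module $\mathcal{V}_p^{\rm d}$ (all norm-compatible families in $\prod_{m\in\mathbb{N}(p)} D(m)^+_p$) and proves it is \emph{free of rank one} over $\Lambda_{(p)} := \varprojlim_{m\in\mathbb{N}(p)} R^+_{m,p}$ with basis $\iota_p(\Phi^{1+\tau})$. Unique $p$-divisibility then follows from a sandwich argument: $\mathcal{V}_p^{\rm d}/\overline{\mathcal{F}}^{\rm c}_{(p)}$ is uniquely $p$-divisible because it identifies with $\varprojlim_m(\ZZ_p/\ZZ)\otimes R_m^+$, while $\mathcal{V}_p^{\rm d}/\overline{\mathcal{F}}^{\rm d}_{(p)}$ has no $p$-torsion. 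Your ``Mittag-Leffler lifting'' is essentially the bijectivity of the diagonal $\Lambda_{(p)}\to\mathcal{V}_p^{\rm d}$, and the paper handles the obstacle you flag via a separate lemma showing $\varprojlim_b I_{mp^b,p}=0$. (Your invocation of Theorem~\ref{reduction result}(iii) for assembling $p$-th roots is misplaced; that result concerns the isomorphism induced by $1+\tau$, not root-extraction.)

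Your argument for (iii), however, has a genuine gap in the implication (c) $\Rightarrow$ (a). You assert that condition (c) translates to ``$Q[p^t]=0$'', but this is not what (c) says: condition (c) asserts that a distribution which is a $p^t$-th power on $\mathbb{N}(p)$ is a $p$-th power globally, \emph{not} a $p^t$-th power globally. So (c) does not give $Q[p^t]=0$, and your inclusion $Q[p]\subseteq Q[p^t]$ is beside the point. The paper's argument is more delicate: starting from an element of $Q={\rm cok}(\kappa_p)$ of exact order $p^e$ with $e>t$, one lifts to $f^{1+\tau}\in\overline{\mathcal{F}}^{\rm d}_{(p)}$, obtains $f_e\in\mathcal{F}^{\rm d}$ with $\iota_p(f_e^{1+\tau})=f^{(1+\tau)p^e}$, applies (c) to extract a global $p$-th root $h_1\in\mathcal{F}^{{\rm d},+}$, and then \emph{iterates}: $h_1$ is a $p^{e-1}$-th power on $\mathbb{N}(p)$, so if $e-1\ge t$ one applies (c) again to get $h$ with $\iota_p(h)=f^{(1+\tau)p^{e-2}}$. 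Since $h^{1+\tau}=h^2$ for $h\in\mathcal{F}^{{\rm d},+}$, this forces the order of the original element to divide $2p^{e-2}$, contradicting $p^e$. The conclusion is that $Q[p^\infty]$ has bounded exponent; since $Q[p^\infty]$ is also $p$-divisible (being a quotient of the uniquely $p$-divisible middle term), it must vanish. This iteration is the missing idea in your proposal.
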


In Remark \ref{p adic prop} below we will discuss the explicit condition that occurs in Theorem \ref{key real}(iii)(c) in the context of cyclotomic distributions.

The proof of this result will occupy the rest of \S\ref{pd section}. In the sequel we shall therefore fix a prime $p$.

\subsection{}In this section we prove Theorem \ref{key real}(i).

To verify that $\kappa_p$ is injective, we fix an $f$ in $\mathcal{F}^{\rm d}$ such that $\iota_p(f^{1+\tau})\in (1+\tau)\iota_p(\mathcal{F}^{\rm c})$ and, under these hypotheses, we must show that $f^{1+\tau}\in (1+\tau)\mathcal{F}^{{\rm c}}$.

It is therefore enough to show that if $r$ is any element of $R$ for which $\iota_p(f^{1+\tau}) = \iota_p(\Phi^{r(1+\tau)})$, then one has $f^{1+\tau}= \Phi^{r(1+\tau)}$.

But if we fix such an $r$ then the product distribution $\Delta:= f^{1+\tau}\cdot \Phi^{-r(1+\tau)}$ belongs to the module
$\mathcal{F}^{{\rm d},+}$ defined in \S\ref{totally positive section} and also, by assumption, to the kernel of $\iota_p$.

Hence, by applying Lemma \ref{useful 1} (with $\Sigma = \mathbb{N}(p)$), we can conclude that $\Delta$ is trivial, as required.

To show ${\rm cok}(\kappa_p)$ is annihilated by $1-\sigma_p$ it is enough to show that for any given function $f$ in $\mathcal{F}^{\rm d}_{(p)}$ there exists a function $f'$ in $\mathcal{F}^{\rm d}$ for which one has $f^{1-\sigma_p} = \iota_p(f')$.

To do this we define $f'$ to be the unique $\Gal(\QQ^c/\QQ)$-equivariant map $\mu_\infty^* \to \QQ^{c,\times}$ that satisfies
\[ f'(\zeta_n) := \begin{cases} f(n)^{1-\sigma_p}, &\text{ if $p$ divides $n$,}\\
                               {\rm N}^{np}_n(f(np)), &\text{ if $p$ is prime to $n$.}\end{cases}\]
It is then immediately clear that $\iota_p(f') = f^{1-\sigma_p}$ whilst an easy exercise shows that $f'$ inherits the properties (\ref{defining prop}) and (\ref{defining prop2}) from $f$ and hence that $f'$ belongs to $\mathcal{F}^{\rm d}$, as required.

This completes the proof of Theorem \ref{key real}(i).

\begin{remark}{\em The same argument can be used to show that for any natural numbers $m$ and $n$ the natural `restriction' map $\overline{\mathcal{F}}_{(m)}/\overline{\mathcal{F}}_{(m)}^{{\rm c}} \to \overline{\mathcal{F}}_{(mn)}/\overline{\mathcal{F}}^{{\rm c}}_{(mn)}$
is injective and has cokernel annihilated by $\prod_\ell(1-\sigma_\ell)$ where $\ell$ runs over all primes that divide $n$ but not $m$.} \end{remark}

\subsection{}In this section we prove Theorem \ref{key real}(ii).

\subsubsection{}For each $n$ in $\mathbb{N}^*$ we set $R_n^+ := \ZZ[G_n^+]$ and write $D(n)^+$ for the $R_n^+$-submodule of $V(n)$ that is generated by $\varepsilon_n := (1-\zeta_n)^{1+\tau}$.

We then define $\mathcal{V}_p^{\rm d}$ to be the subgroup of $\prod_{m \in \mathbb{N}(p)}D(m)^+_p$ comprising all elements $(x_m)_m$ with the property that for any $m$ in ${\mathbb N}(p)$ and any prime $\ell$ one has
\[ {\rm N}^{m\ell}_m(x_{m\ell}) = \begin{cases} x_m, &\text{ if $\ell$ divides $m$,}\\
                                                x_m^{1-\sigma_\ell}, &\text{ otherwise.}\end{cases}\]

Now if $f$ belongs to $\mathcal{F}_{(p)}^{\rm d}$, then for each $n$ in $\mathbb{N}(p)$ the value of $f^{1+\tau}$ at $\zeta_n$ belongs to the (torsion-free) group $V(n)$ and hence can be regarded as an element of $V(n)_p$.

In view of Theorem \ref{first step}, we can therefore use the assignment $f^{1+\tau} \leftrightarrow \{f(m)^{1+\tau}\}_m$ to identify  $\overline{\mathcal{F}}_{(p)}^{\rm d}$ as a subgroup of $\mathcal{V}_p^{\rm d}$.

We can then consider the exact commutative diagram

\[ \begin{CD} 0 @> >> \overline{\mathcal{F}}_{(p)}^{\rm d}/\overline{\mathcal{F}}_{(p)}^{{\rm c}} @> >> \mathcal{V}_{p}^{\rm d}/\overline{\mathcal{F}}_{(p)}^{{\rm c}} @> >> \mathcal{V}_{p}^{\rm d}/\overline{\mathcal{F}}_{(p)}^{{\rm d}} @> >> 0\\
@. @V \times p VV  @V \times p VV @V \times p VV\\
0 @> >> \overline{\mathcal{F}}_{(p)}^{\rm d}/\overline{\mathcal{F}}_{(p)}^{{\rm c}} @> >> \mathcal{V}_{p}^{\rm d}/\overline{\mathcal{F}}_{(p)}^{{\rm c}} @> >> \mathcal{V}_{p}^{\rm d}/\overline{\mathcal{F}}_{(p)}^{{\rm d}} @> >> 0\end{CD}\]
in which the rows are induced by the inclusions $\overline{\mathcal{F}}_{(p)}^{\rm c} \subseteq \overline{\mathcal{F}}_{(p)}^{\rm d}\subseteq \mathcal{V}_p^{\rm d}$.

From Proposition \ref{technical 2}(ii) and (iii) below we know that the second and third vertical arrows in this diagram are respectively bijective and injective and hence (by an application of the Snake Lemma) that the first vertical arrow is bijective.

This observation shows that the group $\overline{\mathcal{F}}_{(p)}^{\rm d}/\overline{\mathcal{F}}_{(p)}^{{\rm c}}$ is uniquely $p$-divisible, and hence proves Theorem \ref{key real}(ii).

\subsubsection{}We recall that $I_n$ denotes the annihilator in $R^+_n$ of the element $\varepsilon_n$ and that this ideal is explicitly described in Lemma \ref{useful 3}.

Before stating the next result we note that $\mathcal{V}_p^{\rm d}$ is naturally a module over the algebra
\[ \Lambda_{(p)} := \varprojlim_{m\in\mathbb{N}(p)} R_{m,p}^+,\]
where the limit is taken with respect to the natural projection maps $R_{n}^+ \to R^+_m$ for each $m$ in $\mathbb{N}(p)$ and each $n$ in $\mathbb{N}(m)$.

\begin{proposition}\label{technical 2} \
\begin{itemize}
\item[(i)] The $\Lambda_{(p)}$-module $\mathcal{V}^{\rm d}_p$ is free of rank one with basis $\,\iota_p(\Phi^{1+\tau})$.
\item[(ii)] The quotient group $\mathcal{V}^{\rm d}_p/\overline{\mathcal{F}}^{\rm c}_{(p)}$ is uniquely $p$-divisible.
\item[(iii)] The quotient group $\mathcal{V}^{\rm d}_p/\overline{\mathcal{F}}^{\rm d}_{(p)}$ has no element of order $p$.
\end{itemize}
\end{proposition}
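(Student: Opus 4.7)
For part (i), I would construct the natural $\Lambda_{(p)}$-linear map $\Psi_p \colon \Lambda_{(p)} \to \mathcal{V}_p^{\rm d}$ by $\lambda = (\lambda_m)_m \mapsto (\varepsilon_m^{\lambda_m})_m$, show it is well-defined, and verify it is bijective. Well-definedness uses the distribution identities $\mathrm{N}^{m\ell}_m(\varepsilon_{m\ell}) = \varepsilon_m$ (for $\ell \mid m$) and $\mathrm{N}^{m\ell}_m(\varepsilon_{m\ell}) = \varepsilon_m^{1-\sigma_\ell}$ (for $\ell \nmid m$), combined with the compatibility $\pi^{m\ell}_m(\lambda_{m\ell}) = \lambda_m$ built into the definition of $\Lambda_{(p)}$; one then immediately has $\Psi_p(1) = \iota_p(\Phi^{1+\tau})$. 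For bijectivity, the main input is the identification $D(m)^+_p \cong R^+_{m,p}/I_{m,p}$ provided by the definition of $I_m$ in Lemma~\ref{useful 3}. Injectivity reduces to the fact that if $(\varepsilon_m^{\lambda_m})_m = 1$, then $\lambda_m \in I_{m,p}$ for all $m$; specializing to $m = p^d$ and invoking Example~\ref{annihilator exam}(i) gives $\lambda_{p^d} = 0$, and compatibility $\lambda_{p^d} = \pi^{m p^d}_{p^d}(\lambda_{m p^d})$ (for large $d$) combined with a Mittag--Leffler argument forces $\lambda = 0$. Surjectivity is obtained by lifting a compatible system of cosets $(\bar r_m + I_{m,p})_m$ to an element of $\Lambda_{(p)}$, again using the cofinal prime-power chain $\{p^d\}_d$ (where lifts are canonical) to anchor a Mittag--Leffler argument.

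For part (ii), I would use the isomorphism from (i) to identify $\mathcal{V}_p^{\rm d}$ with $\Lambda_{(p)} = \varprojlim_{m\in \mathbb{N}(p)} \ZZ_p[G_m^+]$. Setting $R^+ := \varprojlim_m \ZZ[G_m^+]$, the natural map $R \to \Lambda_{(p)}$ factors through the surjection $R \twoheadrightarrow R^+$ and the canonical inclusion $R^+ \hookrightarrow \Lambda_{(p)}$, so under the isomorphism the submodule $\overline{\mathcal{F}}_{(p)}^{\rm c} = R \cdot \iota_p(\Phi^{1+\tau})$ corresponds precisely to $R^+ \subset \Lambda_{(p)}$. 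Hence $\mathcal{V}_p^{\rm d}/\overline{\mathcal{F}}_{(p)}^{\rm c} \cong \Lambda_{(p)}/R^+$, which by Mittag--Leffler identifies with the inverse limit $\varprojlim_m \bigl( (\ZZ_p/\ZZ) \otimes_\ZZ \ZZ[G_m^+] \bigr)$. Since $\ZZ_p/\ZZ$ is uniquely $p$-divisible, and this property is preserved both by tensoring with the torsion-free modules $\ZZ[G_m^+]$ and by passing to inverse limits (as multiplication by $p$ becomes the inverse limit of bijections), (ii) follows.

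For part (iii), suppose $(x_m) \in \mathcal{V}_p^{\rm d}$ and $f \in \mathcal{F}_{(p)}^{\rm d}$ satisfy $x_m^p = f(m)^{1+\tau}$ for all $m \in \mathbb{N}(p)$; we must construct $g \in \mathcal{F}_{(p)}^{\rm d}$ with $g(m)^{1+\tau} = x_m$ for all $m$. First observe that $V(m)_p/V(m) \cong (\ZZ_p/\ZZ) \otimes V(m)$ has no $p$-torsion, so the hypothesis $x_m^p \in V(m)$ forces $x_m \in V(m)$; in particular each $x_m$ is a totally positive unit of $\QQ(m)^+$. Since any totally positive element is a local norm at every archimedean place and any unit is a local norm at every finite place, the Hasse norm theorem produces $y_m \in \QQ(m)^\times$ with $y_m^{1+\tau} = x_m$, the choice being unique up to multiplication by an element of the form $z_m^{\tau-1}$ (Hilbert 90). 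The hard part will be adjusting the $y_m$ to satisfy the distribution relations (\ref{basic facts}), which amounts to controlling a $1$-cocycle obstruction in the cohomology of $\Gal(\QQ(m)/\QQ(m)^+)$; this obstruction must be handled by propagating the distribution relations of $f$ through the norm-and-lift construction and invoking Hilbert 90 at each step to absorb the resulting imaginary factors into the freedom of choice for the $y_m$.
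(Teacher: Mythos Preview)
Your treatment of claim (ii) is correct and essentially identical to the paper's: both identify $\overline{\mathcal{F}}^{\rm c}_{(p)}$ with the image of $R^+ := \varprojlim_m \ZZ[G_m^+]$ inside $\Lambda_{(p)}$ and compute the cokernel as $\varprojlim_m (\ZZ_p/\ZZ)\otimes_\ZZ \ZZ[G_m^+]$.

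For claim (i), however, there is a genuine gap. The chain $\{p^d\}_{d\ge 1}$ is \emph{not} cofinal in $\mathbb{N}(p)$ under divisibility (for instance $qp$ with $q$ an odd prime $\neq p$ divides no $p^d$), so knowing $\lambda_{p^d}=0$ for all $d$ does not force $\lambda=0$ in $\Lambda_{(p)}$, and the ``Mittag--Leffler anchoring'' you describe cannot work as stated. The paper's argument is different: it invokes Lemma~\ref{derived proj limit lemma}, which shows that for \emph{every} fixed $m\in\mathbb{N}(p)$ one has $\varprojlim_b I_{mp^b,p}=0$; hence if $\lambda_m\in I_{m,p}$ for all $m$ then $(\lambda_{mp^b})_b\in\varprojlim_b I_{mp^b,p}=0$, giving $\lambda_m=0$ for each $m$ separately. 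You also gloss over a second subtlety in surjectivity: writing $x_m=\varepsilon_m^{r_m}$, the distribution relations only give $\pi^{m\ell}_m(r_{m\ell})\equiv r_m\pmod{I_{m,p}}$ when $\ell\mid m$, but for $\ell\nmid m$ one only obtains the weaker condition $(1-\sigma_\ell)\cdot\pi^{m\ell}_m(r_{m\ell})\equiv(1-\sigma_\ell)\cdot r_m\pmod{I_{m,p}}$. The paper first uses Lemma~\ref{derived proj limit lemma} to lift along each $p$-tower to an element of $\Lambda_{(m,p)}:=\varprojlim_d R^+_{mp^d,p}$, and then observes that $1-\sigma_\ell$ is a \emph{non-zero divisor} in $\Lambda_{(m,p)}$ to upgrade the weak compatibility to genuine compatibility under $\varpi^{m\ell}_m$.

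For claim (iii), your approach via the Hasse norm theorem and Hilbert~90 is needlessly complicated and, as you acknowledge, incomplete. The paper's argument is one line once you have (ii): since $\mathcal{V}^{\rm d}_p/\overline{\mathcal{F}}^{\rm d}_{(p)}$ is a quotient of the uniquely $p$-divisible group $\mathcal{V}^{\rm d}_p/\overline{\mathcal{F}}^{\rm c}_{(p)}$, it suffices to bound the exponent of its $p$-power torsion. If $x=(x_n)_n\in\mathcal{V}^{\rm d}_p$ represents a class of order $p^t$, then exactly as you observe $x_n\in V(n)$ for all $n$; but then the assignment $\zeta_n\mapsto x_n$ \emph{already} defines an element $f\in\mathcal{F}^{\rm d}_{(p)}$ (it inherits Galois-equivariance and the distribution relations from the definition of $\mathcal{V}^{\rm d}_p$). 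Since each $x_n$ is totally real, $f^{1+\tau}=f^2$, so $x^2\in\overline{\mathcal{F}}^{\rm d}_{(p)}$ and $p^t\mid 2$. There is no need to produce a square root of $x$ in $\mathcal{F}^{\rm d}_{(p)}$.
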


\begin{proof} For each $n$ in $\mathbb{N}(p)$ the map $R_{n,p}^+ \to D(n)^+_p$ that sends $1$ to $\varepsilon_n$ induces an isomorphism of $R_{n,p}$-modules $R^+_{n,p}/I_{n,p} \cong D(n)^+_p$. We therefore obtain a well-defined isomorphism of $\Lambda_{(p)}$-modules
\[ \varepsilon: \prod_{n\in \mathbb{N}(p)}R^+_{n,p}/I_{n,p} \to \prod_{n\in \mathbb{N}(p)}D(n)^+_p\]
by defining $\varepsilon ((r_n)_n)$ to be $(\varepsilon_n^{r_n})_n$.

Then the norm relations for the cyclotomic elements $\varepsilon_n$ combine to imply that $\varepsilon$ restricts to give an isomorphism
\[ \mathcal{W}_p \cong \mathcal{V}^{\rm d}_p\]
where $\mathcal{W}_p$ denotes the $\Lambda_{(p)}$-submodule of $\prod_{n\in \mathbb{N}(p)}R^+_{n,p}/I_{n,p}$ comprising elements with the property that for every $m$ in $\mathbb{N}(p)$ and every prime $\ell$ one has
\begin{itemize}
\item[($*_1$)] $\pi^{m\ell}_m(r_{m\ell}) \equiv r_m$ modulo $I_{m,p}$ if $\ell$ divides $m$;
\item[($*_2$)] $\pi^{m\ell}_m((1-\sigma_\ell)\cdot r_{m\ell}) \equiv (1-\sigma_\ell)\cdot r_m$ modulo $I_{m,p}$ if $\ell$ is prime to $m$.
\end{itemize}
(Note that these congruence conditions are well-defined since for any $m$ and any prime $\ell$ one has $\pi^{m\ell}_m(I_{m\ell}) \subseteq I_{m}$ if $\ell$ divides $m$ and $(1-\sigma_\ell)\cdot \pi^{m\ell}_m(I_{m\ell}) \subseteq I_{m}$ if $\ell$ is prime to $m$.)

We write
\[ \Delta_p: \Lambda_{(p)}\to \mathcal{W}_p\]
for the natural diagonal map.

Then for each $r$ in $\Lambda_{(p)}$ and each $n$ in $\mathbb{N}(p)$ one has $\varepsilon(\Delta_p(r))_n = \varepsilon_n^r = \iota_p(\Phi^{(1+\tau)})^r(n)$ and so $\varepsilon(\Delta_p(r)) = \iota_p(\Phi^{(1+\tau)})^r$. To prove claim (i) it is thus enough to show that $\Delta_p$ is bijective.

Injectivity of $\Delta_p$ follows directly from the first claim in Lemma \ref{derived proj limit lemma} below. It is thus enough to show that each element $r = (r_m)_{m \in \mathbb{N}(p)}$ of $\mathcal{W}_p$ belongs to the image of $\Delta_p$.

Now, for each such $r$ and each $m$ in $\mathbb{N}(p)$, the condition ($*_1$) implies that the element $r_{mp^\infty} := (r_{mp^d})_{d \ge 0}$ belongs to $\varprojlim_d R^+_{mp^d,p}/I_{mp^d,p}$, where the transition morphisms are the natural projection maps, whilst Lemma \ref{derived proj limit lemma} below implies that the natural projection map from $\Lambda_{(m,p)} := \varprojlim_d R^+_{mp^d,p}$ to $\varprojlim_d R^+_{mp^d,p}/I_{mp^d,p}$ is bijective.

For each $m$ in $\mathbb{N}(p)$ and each multiple $m'$ of $m$ we therefore obtain a surjective composite homomorphism
\[ \varpi^{m'}_m: \varprojlim_d R^+_{m'p^d,p}/I_{m'p^d,p} \cong \Lambda_{(m',p)} \to \Lambda_{(m,p)} \cong \varprojlim_d R^+_{mp^d,p}/I_{mp^d,p}\]
in which the arrow denotes the natural projection map.

We now assume that $m' = m\ell$ for a prime $\ell$. If $\ell$ divides $m$, then ($*_1$) implies directly that $\varpi^{m\ell}_m(r_{m\ell p^\infty}) = r_{mp^\infty}$. In addition, if $\ell$ does not divide $m$, then ($*_2$) implies that the difference $\varpi^{m\ell}_m(r_{m\ell p^\infty}) - r_{mp^\infty}$ is annihilated by multiplication by $1-\sigma_\ell$ and hence, since $1-\sigma_\ell$ is a non-zero divisor in $\Lambda_{(m,p)}$, that $\varpi^{m\ell}_m(r_{m\ell p^\infty}) = r_{mp^\infty}$.

Since $\Lambda_{(p)}$ is equal to $\varprojlim_{m}\Lambda_{(m,p)}$, where $m$ runs over $\mathbb{N}(p)$ and the transitions morphisms are the natural projection maps, this shows that $r$ belongs to the image of $\Delta_p$, as required to complete the proof of claim (i).

Turning to claim (ii) we note that the subgroup $\overline{\mathcal{F}}_{(p)}^{\rm c}$ of $\mathcal{V}^{\rm d}_p$ coincides with the image of the composite homomorphism
\begin{equation*}\label{composite} R \to \varprojlim_{m\in \mathbb{N}(p)}R_m^+ \xrightarrow{\theta} \varprojlim_{m\in \mathbb{N}(p)}R_{m,p}^+ = \Lambda_{(p)} \xrightarrow{\Delta_p} \mathcal{W}_p \xrightarrow{\varepsilon} \mathcal{V}^{\rm d}_p.\end{equation*}
where the first map is the natural projection and $\theta$ is the natural inclusion.

In particular, since the first map in this composition is surjective and the last two are bijective, claim (ii) will follow if we can show that  ${\rm cok}(\theta)$  is uniquely $p$-divisible.

To do this we note that for each $m$ in $\mathbb{N}(p)$ and each $m'$ in $\mathbb{N}(m)$ there exists a commutative diagram of short exact sequences

\[\begin{CD} 0 @> >> R_{m'}^+ @> \subseteq >> R^+_{m',p} @> >> (\ZZ_p/\ZZ)\otimes_\ZZ R_{m'}^+ @> >> 0\\
 @. @V VV @V VV @V VV \\
 0 @> >> R_{m}^+ @> \subseteq >> R^+_{m,p} @> >> (\ZZ_p/\ZZ)\otimes_\ZZ R_m^+ @> >> 0\end{CD}\]
in which each vertical arrow is the natural projection map, and so is surjective.

Hence, by passing to the inverse limit over $m$ of these sequences, we deduce that ${\rm cok}(\theta)$ is isomorphic to $\varprojlim_m (\ZZ_p/\ZZ)\otimes_\ZZ R_{m}^+$ and hence is uniquely $p$-divisible since $\ZZ_p/\ZZ$ is.

To prove claim (iii) we note that $\mathcal{V}^{\rm d}_p/\overline{\mathcal{F}}^{\rm d}_{(p)}$ is a quotient of the uniquely $p$-divisible group $\mathcal{V}^{\rm d}_p/\overline{\mathcal{F}}^{\rm c}_{(p)}$ and so has no element of order $p$ if and only if the subgroup of elements of $p$-power order has bounded exponent.

To investigate this condition we write $\mathcal{U}$ for the subgroup of $\mathcal{F}^{\rm d}_{(p)}$ comprising all functions $f$ with the property that for every $n$ in $\mathbb{N}(p)$ the value $f(n)$ belongs to the (torsion-free) subgroup $V(n)$ of $E(n)'$.

It is then clear that $\overline{\mathcal{F}}^{\rm d}_{(p)}\subseteq \mathcal{U}$ and that an element $x = (x_n)_n$ of $\mathcal{V}^{\rm d}_p$ is equal to $(f(n))_n$ for some $f$ in $\mathcal{U}$ if and only if one has $x_n\in V(n)$ for every $n$.

We now assume to be given an element $c$ of $\mathcal{V}^{\rm d}_p/\overline{\mathcal{F}}^{\rm d}_{(p)}$ of exact order $p^t$ for some $t$ and we lift $c$ to an element $x = (x_n)_n$ of $\mathcal{V}^{\rm d}_p$.

Then, for each $n \in \mathbb{N}(p)$, we know that the element $x_n^{p^t}$ of $D(n)^+_p \subseteq V(n)_p$ belongs to $V(n)$. Since the quotient of $V(n)_p$ by $V(n)$ is uniquely $p$-divisible, it follows that $x_n$ belongs to $V(n)$ and hence that $x = (f(n))_n$ for some $f$ in $\mathcal{U}$.

Since $f^2 = f^{1+\tau}$ belongs to $\overline{\mathcal{F}}^{\rm d}_{(p)}$ this in turn implies that the order $p^t$ of $c$ divides $2$, as suffices to prove claim (iii). \end{proof}

We end this section by proving a technical result about limits that was used above.

\begin{lemma}\label{derived proj limit lemma} Fix $m$ in $\mathbb{N}^\ast$ and a prime divisor $p$ of $m$. For each natural number $b$ write $\pi'_b$ for the restriction of $\pi^{mp^{b+1}}_{mp^{b}}$ to $I_{mp^{b+1}}$ and $\pi'_{b,p}$ for the scalar extension $\ZZ_p\otimes_\ZZ\pi'_b$.

Then the limit ${\varprojlim}_b (I_{mp^b,p},\pi'_{b,p})$ and first derived limit ${\varprojlim}^1_b (I_{mp^b,p},\pi'_{b,p})$ both vanish.
\end{lemma}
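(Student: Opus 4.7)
Write $J_b=I_{mp^b,p}$, $R_b=R^+_{mp^b,p}$, and $\Lambda_{(m,p)}=\varprojlim_b R_b$. If $m$ is a power of $p$ then every $mp^b$ is a prime power, so Example \ref{annihilator exam}(i) gives $J_b=0$ for all $b$ and the assertion is trivial; I therefore assume that $m$ admits a prime divisor distinct from $p$.

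Apply the Snake Lemma to the inverse system of short exact sequences $0\to J_b\to R_b\to R_b/J_b\to 0$. The transition maps on $R_b$ are surjective (being the $\mathbb{Z}_p$-scalar extensions of $\pi^{mp^{b+1}}_{mp^b}$), whence ${\varprojlim}_b^1 R_b=0$ and the same holds for the quotient system. The resulting exact sequence
\begin{equation*}
0\to {\varprojlim}_b J_b\to \Lambda_{(m,p)} \xrightarrow{\theta} {\varprojlim}_b R_b/J_b \to {\varprojlim}_b^1 J_b\to 0
\end{equation*}
reduces the lemma to showing that $\theta$ is bijective. Identifying $R_b/J_b$ with $D(mp^b)^+_p$ via $1\mapsto \varepsilon_{mp^b}$, and using the distribution relation $N^{mp^{b+1}}_{mp^b}(\varepsilon_{mp^{b+1}})=\varepsilon_{mp^b}$ (valid since $p\mid mp^b$) to match the transition maps, $\theta$ is identified with $\alpha\mapsto (\varepsilon_{mp^b}^{\alpha_b})_b$.

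For injectivity I parametrize finite-order characters of $G^+_\infty:=\varprojlim_b G^+_{mp^b}$ as pairs $(\chi,\zeta)$, with $\chi$ factoring through a fixed finite quotient and $\zeta$ a $p$-power root of unity, which yields a decomposition $\Lambda_{(m,p)}\otimes_{\mathbb{Z}_p} \mathbb{Q}_p\cong \bigoplus_\chi \mathbb{Q}_p(\chi)[[T]]$. Combining Lemma \ref{useful 3} with the behaviour of decomposition groups in the cyclotomic tower shows that for each fixed $\chi$ the set of $\zeta$ such that $(\chi,\zeta)$ lies in the support of some (equivalently, all sufficiently large) $e_{mp^b}$ is cofinite among $p$-power roots of unity. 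As $\alpha_b\in J_b$ forces $\psi(\alpha)=0$ for every such in-support $\psi$, each component $\alpha_\chi$ vanishes at infinitely many points of the open unit disc, and Weierstrass preparation gives $\alpha_\chi=0$ for every $\chi$, whence $\alpha=0$.

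For surjectivity I show that each transition $\pi'_{b,p}\colon J_{b+1}\to J_b$ is itself surjective. Given $z\in J_b$, lift to $\tilde z\in R_{b+1}$; a direct calculation shows that $\pi^{mp^{b+1}}_{mp^b}$ carries $e_{D_{mp^{b+1},\ell}}$ to $e_{D_{mp^b,\ell}}$ for each $\ell\mid m$ and hence $\pi^{mp^{b+1}}_{mp^b}(e_{mp^{b+1}})=e_{mp^b}$. Consequently $e_{mp^{b+1}}\tilde z\in \ker(\pi)\otimes \mathbb{Q}_p$, and the remaining task is to produce an integral $\kappa\in K:=\ker(\pi^{mp^{b+1}}_{mp^b})$ with $e_{mp^{b+1}}\kappa=e_{mp^{b+1}}\tilde z$; once such $\kappa$ is found, $\delta:=\tilde z-\kappa$ lies in $J_{b+1}$ and lifts $z$. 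This final integrality claim is the hard part: while the rational identity follows at once from the character-wise decomposition, promoting it to an integral one requires exploiting the cyclic-of-order-$p$ structure of $H:=\ker(G^+_{mp^{b+1}}\to G^+_{mp^b})$ (with minor modifications when $p=2$) to control the $p$-power denominators produced by the rational idempotents $e_{D_{mp^{b+1},\ell}}$.
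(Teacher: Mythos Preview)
Your reduction via the six-term exact sequence is clean, and your injectivity argument for $\theta$ (using the character decomposition and Weierstrass preparation) is valid, though it takes a different route from the paper. The paper instead proves directly that for all $b\ge b_0$ one has $\im(\pi'_b)=p\cdot I_{mp^b}$, which immediately forces the inverse limit to vanish (any element would lie in $\bigcap_k p^k I_{mp^b,p}=0$). Your analytic approach is a legitimate alternative for this half of the lemma.

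The genuine gap is in your surjectivity step. You assert that each transition $\pi'_{b,p}\colon J_{b+1}\to J_b$ is surjective, and you correctly identify the ``integrality'' of $\kappa$ as the crux. But this integrality simply fails: the paper's explicit computation shows that for $b\ge b_0$ the image of $\pi'_b$ is exactly $p\cdot I_{mp^b}$, a \emph{proper} submodule whenever $m$ is not a prime power (since then $I_{mp^b}\neq 0$, as the trivial character already lies in its support). Concretely, using the paper's description $I_M=\{x'\cdot N_H : \pi^M_{M_0}(x')\in I_{M_0}\}$ with $H=\Gal(\QQ(M)^+/\QQ(M_0)^+)$, one sees that $\pi^{M'}_M(N_{H'})=p\cdot N_H$, which is the source of the factor of $p$. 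So for any $z\in J_b\setminus pJ_b$ no integral $\kappa\in\ker(\pi^{mp^{b+1}}_{mp^b})$ with $e_{mp^{b+1}}\kappa=e_{mp^{b+1}}\tilde z$ can exist; the cyclic-of-order-$p$ structure of $H$ is precisely what produces the obstruction rather than a tool for removing it.

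The fix is much simpler than what you attempt: each $I_{mp^b,p}$ is a finitely generated $\ZZ_p$-module, hence compact, and the Mittag-Leffler criterion gives $\varprojlim^1_b J_b=0$ immediately. This is exactly how the paper handles the derived limit, in one line.
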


\begin{proof} The vanishing of the derived limit ${\varprojlim}^1_b (I_{mp^b,p},\pi'_{b,p})$ follows directly from the Mittag-Leffler criterion and the fact that each module $I_{mp^b,p}$ is compact.

We note next that there exists a natural number $b_0$ such that all prime divisors of $m$ have full decomposition group in $\Gal(\QQ(mp^\infty)^+/\QQ(mp^{b_0})^+)$.

Then to prove the vanishing of ${\varprojlim}_b (I_{mp^b,p},\pi'_{b,p})$ it is clearly enough to prove that for all $b \ge b_0$ one has
\begin{equation}\label{ud claim} \im(\pi_b') = p\cdot I_{mp^{b}}\end{equation}
and hence also $\im(\pi_b')_p = p\cdot I_{mp^{b},p}$. To prove this we fix $b \ge b_0$ and set $M := mp^b$, $M_0 := mp^{b_0}$, $G := G_M^+$ and $G_0 := G_{M_0}^+$.

If $M$ is a power of $p$, then (\ref{ud claim}) follows immediately from the fact that $I_{M}$ vanishes for all values of $b$ by Example \ref{annihilator exam}(i).

We therefore assume $M$ is divisible by at least two primes. In this case the description of $I_M$ given in Lemma \ref{useful 3} implies that, for any non-trivial homomorphism $\psi: G \to \QQ^{c,\times}$, the idempotent $e_\psi$ belongs to $\QQ^c\otimes_\ZZ I_M$  if and only if there exists a prime $\ell$ that divides $m$ and is such that its decomposition subgroup in $G$ is contained in $\ker(\psi)$. This implies, in particular, that any such $\psi$ factors through the projection $G \to G_0$.

Now any element $x$ of $I_{M}$ can be written uniquely as $\sum_{\psi}c_\psi\cdot e_\psi$ for suitable elements $c_\psi$ of $\QQ^c$, where $\psi$ runs over all homomorphisms $G \to \QQ^{c,\times}$.

In particular, if any term $c_\psi$ in this sum is non-zero, then (since $\varepsilon_M$ generates an $R^+_M$-module that is torsion-free) $e_\psi$ must belong to $\QQ^c\otimes_\ZZ I_M$ and hence $\psi$ factors through the projection $G \to G_0$.

This observation implies that $x = hx$ for all elements $h$ of $H := \Gal(\QQ(M)^+/\QQ(M_0)^+)$ and hence that $I_{M}$ is equal to the set of elements of the form $x'\cdot \sum_{h \in H}h$ with $x' \in R_M^+$ such that $\pi^{M}_{M_0}(x')\in I_{M_0}$.

By using this description, one shows that the restriction of $\pi^M_{M_0}$ to $I_{M}$ is both injective and has image $|H|\cdot I_{M_0}$ and then (\ref{ud claim}) follows easily from this fact.
\end{proof}

\begin{remark}{\em A closer analysis of the above argument shows that the inverse system of abelian groups $(I_{mp^{b}},\pi_b')_{b \in \mathbb{N}}$ is isomorphic to a direct sum of finitely many copies of the system $(X_b,\kappa_b)_{b \in \mathbb{N}}$ where $X_b := \ZZ$ for each $b$ and each transition morphism $\kappa_b$ is multiplication by $p$ and this fact can be used to show that the derived limit ${\varprojlim}^1_b (I_{mp^b},\pi'_b)$ is uniquely $p$-divisible. However, we make no use of this fact and so omit the details. }\end{remark}

\begin{remark}\label{p adic prop}{\em Proposition \ref{technical 2}(i) also allows us to show that cyclotomic distributions satisfy the condition in Theorem \ref{key real}(iii)(c) with $t = 1$ if $p$ is odd and $t = 2$ if $p=2$. To explain this, we fix $r$ in $R$ such that the distribution $f := \Phi^{(1+\tau)r}$ has $f(m) \in V(m)^{p^t}$ for all $m$ in $\mathbb{N}(p)$, with $t$ as specified above. Then for any $n$ in $\mathbb{N}^*\setminus \mathbb{N}(p)$ the sequence $(f(np^a))_{a \in \mathbb{N}}$ is a $p^t$-th power in the limit $\mathcal{F}^{\rm u}(np)_p^\infty$ that occurs in Proposition \ref{bijection lemma} and hence also in the limit $C(np)_p^\infty$. The argument of \cite[Th. 2.4]{Seo3} then allows one to deduce that each element $f(np^a)^{2} = f(np^a)^{1+\tau}$ is a $p^t$-th power in the $R^+_{np^a,p}$-module generated by $\Phi^{1+\tau}(np^a)$. This then combines with the result of Proposition \ref{technical 2}(i) to imply that the element of $\Lambda_{(p)}$ corresponding to $(1+\tau)2r$ is divisible by $p^t$, and hence in all cases that $(1+\tau)r$ belongs to $pR$. This in turn guarantees that $f(n)$ belongs to $V(n)^p$ for all $n$ in $\mathbb{N}^*$, as required.
}\end{remark}

\subsection{}We now prove Theorem \ref{key real}(iii).

To do this we first note that Theorem \ref{reduction result}(iii) implies $\mathcal{F}_{\rm tf}^{\rm d}/\mathcal{F}^{{\rm c}}$ is uniquely $p$-divisible if and only if $\overline{\mathcal{F}}^{\rm d}/\overline{\mathcal{F}}^{\rm c}$ is uniquely $p$-divisible. Then we use the fact that Theorem \ref{key real}(i) gives rise to an exact commutative diagram

\[ \begin{CD} 0 @> >> \overline{\mathcal{F}}^{\rm d}/\overline{\mathcal{F}}^{{\rm c}} @> \kappa_p >> \overline{\mathcal{F}}_{(p)}^{\rm d}/\overline{\mathcal{F}}_{(p)}^{{\rm c}} @> >> {\rm cok}(\kappa_p) @> >> 0\\
@. @V \times p VV  @V \times p VV @V \times p VV\\
0 @> >> \overline{\mathcal{F}}^{\rm d}/\overline{\mathcal{F}}^{{\rm c}} @> \kappa_p >> \overline{\mathcal{F}}_{(p)}^{\rm d}/\overline{\mathcal{F}}_{(p)}^{{\rm c}} @> >> {\rm cok}(\kappa_p) @> >> 0\end{CD}\]
in which the central vertical arrow is bijective by Theorem \ref{key real}(ii).

In particular, by applying the Snake Lemma to this diagram we deduce that $\overline{\mathcal{F}}^{\rm d}/\overline{\mathcal{F}}^{{\rm c}}$ is uniquely $p$-divisible if and only if the $p$-power torsion subgroup ${\rm cok}(\kappa_p)[p^\infty]$ of ${\rm cok}(\kappa_p)$ vanishes. Further, since $\overline{\mathcal{F}}_{(p)}^{\rm d}/\overline{\mathcal{F}}_{(p)}^{{\rm c}}$ is $p$-divisible, the group ${\rm cok}(\kappa_p)[p^\infty]$ is also $p$-divisible and so vanishes if and only if it has bounded exponent.

To prove equivalence of the stated conditions in Theorem \ref{key real}(iii) it is thus enough to show that condition (c) implies ${\rm cok}(\kappa_p)[p^\infty]$ has bounded exponent.

To do this we assume the validity of condition (c), fix an element $x$ of ${\rm cok}(\kappa_p)$ of exact order $p^e$ and can further assume, without loss of generality, that $e$ is strictly bigger than the integer $t$ that occurs in condition (c).

We choose a map $f$ in $\mathcal{F}^{\rm d}_{(p)}$ such that $f^{1+\tau}$ represents $x$ and note that, by assumption, there exists a map $f_e$ in $\mathcal{F}^{\rm d}$ with $\iota_p(f_e^{1+\tau}) = f^{(1+\tau)p^e}$.

By applying condition (c) to the map $f_e^{1+\tau}$ we can thus deduce that for every $n$ in $\mathbb{N}^*$ there exists a (unique) element $h_{1,n}$ of the (torsion-free) group $V(n)$ with $f_e(n)^{1+\tau} = h_{1,n}^p$.

%
%
%
%

%

We then define $h_1: \mu_\infty^*\to \QQ^{c,\times}$ to be the unique $\Gal(\QQ^c/\QQ)$-equivariant map with $h_1(\zeta_n) = h_{1,n}$ for all $n$ in $\mathbb{N}^*$.

Then $h_1^{p} = f_e^{1+\tau}$ and hence, as each group $V(n)$ is torsion-free, the function $h_1$ inherits the relation (\ref{defining prop}) from the map $f_e^{1+\tau}$ and so belongs to $\mathcal{F}^{{\rm d},+}$.

In a similar way, one has
\[ \iota_p(h_1)^p = \iota_p(h_1^p) = \iota_p(f_e^{1+\tau}) =  f^{(1+\tau)p^e}\]
and hence $\iota_p(h_1) = f^{(1+\tau)p^{e-1}}$.

Since, by assumption, $e-1 \ge t$ we can now repeat the above argument with $f_e^{1+\tau}$ replaced by $h_1$ in order to deduce the existence of a function $h$ in $\mathcal{F}^{{\rm d},+}$ with $\iota_p(h) = f^{(1+\tau)p^{e-2}}$.


Then, as $h$ is fixed by $\tau$, the latter equality implies that $\iota_p(h^{1+\tau}) = \iota_p(h^2) = f^{(1+\tau)2p^{e-2}}$ and hence that the order of $x$ divides $2p^{e-2}$.

This contradicts the assumption that the order of $x$ is $p^e$, showing that the exponent of ${\rm cok}(\kappa_p)[p^\infty]$ must be bounded and hence  completing the proof of Theorem \ref{key real}(iii).

\section{The proof of Theorem \ref{main result}}\label{proofs section}

In this section we prove Theorem \ref{main result} and then also justify the comments that follow the statement of this result in the Introduction.

\subsection{}\label{proof section}To prove Theorem \ref{main result} we note at the outset that the results of Theorem \ref{reduction result}(i) and (ii) combine to give a canonical exact commutative diagram of $R$-modules
\begin{equation*}\label{prelim main}
\xymatrix{ \widehat{\ZZ}(1) \ar@{^{(}->}[r] & \calF_{\rm tf}^{\rm d} \ar@{->>}^{\mu}[r] & \overline{\mathcal{F}}^{\rm d}\\
 \widehat{\ZZ}(1)\ar@{=}[u] \ar@{^{(}->}[r] & \calF^{\rm c} \ar@{->>}^{\mu}[r] \ar@{^{(}->}[u] & \overline{\mathcal{F}}^{{\rm c}} \ar@{^{(}->}[u]}
\end{equation*}
in which $\mu$ sends each $f$ to $f^{1+\tau}$, the vertical arrows are the natural inclusions and, as in \S\ref{pd section}, we set $\overline{\mathcal{F}}^{\rm d} := (1+\tau)\mathcal{F}^{\rm d}$ and $\overline{\mathcal{F}}^{{\rm c}} := (1+\tau)\mathcal{F}^{\rm c}$.

To derive from this diagram the existence of a diagram as in Theorem \ref{main result} it is thus enough to describe a canonical injective homomorphism of $R$-modules
\begin{equation}\label{kappa def} \kappa': \overline{\mathcal{F}}^{\rm d}\to \widehat{R}(1+\tau)\end{equation}
with the property that $\kappa'(\overline{\mathcal{F}}^{\rm c})$ is equal to the image of the diagonal embedding of $R(1+\tau)$ in $\widehat{R}(1+\tau)$ (and one can then define the map $\kappa$ in Theorem \ref{main result} to be the composite $\kappa'\circ \mu$).

To do this we use the canonical direct product decomposition of rings
\[ \widehat{R} = \prod_p \widehat{R}^{p}.\]
Here the product is over all primes $p$ and $\widehat{R}^{p}$ denotes the pro-$p$ completion $\varprojlim_nR_{n,p}$ of $R$ (so that in the inverse limit $n$ runs over $\mathbb{N}$ and the transition maps are the standard projection maps).

Next we recall that the approach of \S\ref{pd section} implies (via Proposition \ref{technical 2}(i)) that for every $f$ in $\overline{\mathcal{F}}^{\rm d}$ there exists a unique element $r_{f,p}$ of $\widehat{R}^{p}(1+\tau)$ with the property that for each $n$ in $\mathbb{N}(p)$ one has $f(n) = \Phi(n)^{r_{f,p}}$ in $V(n)_p$.

We can therefore define a map as in (\ref{kappa def}) by specifying that for each $f$ in $\overline{\mathcal{F}}^{\rm d}$ one has
$\kappa'(f) = (r_{f,p})_p$.

With this definition, it is clear that $\kappa'$ is a homomorphism of $R$-modules, that $\kappa'$ is injective and also, since $\overline{\mathcal{F}}^{\rm c} = R(1+\tau)\cdot \Phi$, that $\kappa'(\overline{\mathcal{F}}^{\rm c}) = R(1+\tau)$.

This completes the proof Theorem \ref{main result}.

\begin{remark}\label{explicit kappa}{\em A closer analysis of the above construction gives an explicit description of the map $\kappa = \kappa'\circ\mu$ that occurs in Theorem \ref{main result}. To explain this we fix $f$ in $\mathcal{F}^{\rm d}$ and $m$ in $\mathbb{N}^*$. Then for every prime $p$ the argument in \S\ref{pd section} shows that for each $n$ in $\mathbb{N}$ there exists an element $a_n = a_{f,m,n,p}$ of $R^+_{mp^n,p}$ with $f^{1+\tau}(mp^n) = \varepsilon_{mp^n}^{a_n}$ in $V(mp^n)_p$, that the sequence $(\pi^{mp^n}_m(a_n))_n$ is convergent in $R^+_{m,p}$, that the limit $a_{f,m,p} := \lim_{n\to \infty}\pi^{mp^n}_m(a_n)$ is independent of the choice of elements $a_n$ and that the tuple $(a_{f,m,p})_m$ belongs to $\varprojlim_m R^+_{m,p}$. Writing $a_{f,p}$ for the pre-image of $(a_{f,m,p})_m$ under the isomorphism of $\widehat R^p$-modules $\widehat R^p(1+\tau) \to \varprojlim_m R^+_{m,p}$ that sends $1+\tau$ to $1$, one then has $\kappa(f) = (a_{f,p})_p$.}\end{remark}

\begin{remark}{\em The exact sequence of $R$-modules given by the lower row of the diagram in Theorem \ref{main result} splits after inverting $2$ since the map sending $1+\tau$ to $\Phi^{(1+\tau)/2}$ is a section to the restriction of $\ZZ[\frac{1}{2}]\otimes_\ZZ\kappa$ to $\ZZ[\frac{1}{2}]\otimes_\ZZ\mathcal{F}^{\rm c}$ in this case. However, the sequence does not itself split, even as a sequence of $\Gal(\CC/\RR)$-modules, since if this was true there would exist $f$ in $\mathcal{F}^{{\rm c}}$ with $f = f^\tau$ and such that $\kappa(f) = 1+\tau$. This would in turn imply the existence of an element $r$ of $R$ with both $\Phi^{(1-\tau)r} = 1$ and $(1+\tau)r = 1+\tau$. The second equality here implies $r = 1-r'(1-\tau)$ for some $r'$ in $R$ and then the first equality implies $\Phi^{1-\tau} = \Phi^{2r'(1-\tau)}$.  But this cannot be true since, for example, $\Phi^{1-\tau}(4) = -\zeta_4$ is not a square in $\QQ(4)$.}\end{remark}

\begin{remark}\label{tate coh rem}{\em In this remark we set $\Gamma := \Gal(\CC/\RR)$ and note that for any $\Gamma$-module $M$ the Tate cohomology group $\hat H^0(\Gamma,M) := M^{\tau = 1}/(1+\tau)M$ is a vector space over the field of two elements $\mathbb{F}_2$. Then, since the module of Coleman distributions $\mathcal{D}$ is the torsion subgroup of $\mathcal{F}^{\rm d}$ (cf. Remark \ref{more concept}), there is a natural isomorphism %
%
\[ \hat H^0(\Gamma,\mathcal{F}_{\rm tf}^{\rm d}) \cong \frac{\{f \in \mathcal{F}^{\rm d}: f^{\tau-1}\in \mathcal{D}\}}{\mathcal{D} + (1+\tau)\mathcal{F}^{\rm d}}\]
and hence also an exact sequence of $\mathbb{F}_2$-vector spaces
\begin{equation}\label{first tate} 0 \to \mathcal{D} \to \hat H^0(\Gamma,\mathcal{F}^{\rm d}) \to \hat H^0(\Gamma,\mathcal{F}_{\rm tf}^{\rm d}) \to \mathcal{D}\cap (1-\tau)\mathcal{F}^{\rm d} \to 0.\end{equation}
Here the second map is induced by the inclusion $\mathcal{D} \subset \mathcal{F}^{{\rm d},\tau =1}$ (and so is injective since $\mathcal{D}\cap (1+\tau)\mathcal{F}^{{\rm d}} = 0$), the third is obvious and the fourth is induced by sending each distribution $f$ to $f^{1-\tau}$. In particular, since $\mathcal{D}$ is a vector space over $\mathbb{F}_2$ of uncountably infinite dimension, this shows that the group $\hat H^0(\Gamma,\mathcal{F}^{\rm d})$ is large. To be more explicit we assume Coleman's Conjecture to be valid so that $\mathcal{F}^{\rm d} = \mathcal{D} + \mathcal{F}^{\rm c}$. Then $\mathcal{D}\cap (1-\tau)\mathcal{F}^{\rm d}$ is equal to $\mathcal{D}\cap (1-\tau)\mathcal{F}^{\rm c}$ and so vanishes, and $\hat H^0(\Gamma,\mathcal{F}_{\rm tf}^{\rm d})$ identifies with $\hat H^0(\Gamma,\mathcal{F}^{\rm c})$. In addition, by considering Tate cohomology of the (exact) lower row of the diagram in Theorem \ref{main result}, one finds that the results of Theorem \ref{reduction result}(i) and (ii) combine to give a natural exact sequence
\[0 \to \hat H^0(\Gamma,\mathcal{F}^{\rm c}) \to \mathbb{F}_2\otimes_\ZZ R(1+\tau) \to \mathbb{F}_2 \to 0.\]
This sequence then combines with (\ref{first tate}) to give an exact sequence of the form
\[ 0 \to \mathcal{D} \to \hat H^0(\Gamma,\mathcal{F}^{\rm d}) \to \mathbb{F}_2\otimes_\ZZ R(1+\tau) \to \mathbb{F}_2 \to 0.\]
}\end{remark}

\begin{remark}\label{can intersect}{\em For each prime $p$ write $\mathcal{F}^{\dagger}_{(p)}$ for the $R$-submodule of $\mathcal{F}$ comprising functions whose image under the restriction map $\iota_p$ belongs to $\mathcal{F}^{\rm d}_{(p)}$. Then $\mathcal{F}^{\rm d}$ is the intersection of $\mathcal{F}^{\dagger}_{(p)}$ over all $p$ and so there is an inclusion of $\overline{\mathcal{F}}^{\rm d}= (1+\tau)\mathcal{F}^{\rm d}$ into $\bigcap_p (1+\tau)(\mathcal{F}^{\dagger}_{(p)})$ whose cokernel is annihilated by $2$. Taking account of the isomorphism in Theorem \ref{reduction result}(iii) one therefore obtains inclusions

%
\[ 2\cdot \bigcap_p \frac{(1+\tau)(\mathcal{F}^{\dagger}_{(p)})}{\overline{\mathcal{F}}^{\rm c}} \subseteq \frac{\mathcal{F}_{\rm tf}^{\rm d}}{\mathcal{F}^{\rm c}} \subseteq \bigcap_p \frac{(1+\tau)(\mathcal{F}^{\dagger}_{(p)})}{\overline{\mathcal{F}}^{\rm c}}\]
in which both intersections run over all $p$. For each $p$ there is also a natural exact sequence
\[ 0\to (1+\tau)\mathcal{F}\cap \ker(\iota_p) \to \frac{(1+\tau)(\mathcal{F}^{\dagger}_{(p)})}{\overline{\mathcal{F}}^{\rm c}} \to \frac{\overline{\mathcal{F}}^{\rm d}_{(p)}}{\overline{\mathcal{F}}^{\rm c}_{(p)}}\to 0,\]
in which the third map is induced by $\iota_p$ and exactness follows from the fact $\overline{\mathcal{F}}^{\rm c}$ is disjoint from $ \ker(\iota_p)$. In particular, since Lemma \ref{useful 1} implies $\overline{\mathcal{F}}^{\rm d}$ is also disjoint from $\ker(\iota_p)$, this sequence induces an identification of $\mathcal{F}_{\rm tf}^{\rm d}/\mathcal{F}^{\rm c}$ with a submodule of  $\overline{\mathcal{F}}^{\rm d}_{(p)}/\overline{\mathcal{F}}^{\rm c}_{(p)}$. Finally we recall that, by Theorem \ref{key real}(ii), the latter group is uniquely $p$-divisible.}\end{remark}

\begin{remark}{\em If Coleman's Conjecture is valid, and $f$ in $\mathcal{F}^{\rm d}$ is not torsion-valued, then the lower row of the diagram in Theorem \ref{main result} implies that the set of primes $\{\ell: \kappa(f) \in \ell\cdot \widehat{R}(1+\tau)\}$ is finite. The following result provides evidence that this is a reasonable expectation. }\end{remark}

\begin{lemma}\label{generic} If $f$ in $\mathcal{F}^{\rm d}$ is not torsion-valued, then the set of primes $\{\ell: \kappa(f) \in \ell\cdot \widehat{R}(1+\tau)\}$ has Dirichlet density zero. \end{lemma}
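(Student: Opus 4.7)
The plan is to reduce the divisibility condition on $\kappa(f)$ to a condition on a single value $u_0 := f^{1+\tau}(m_0)$ for a well-chosen $m_0$, then to combine Dirichlet's theorem on primes in arithmetic progressions with a tower argument. First I would unwind the definition of $\kappa$ from Remark~\ref{explicit kappa}: if $\kappa(f) \in \ell \widehat R(1+\tau)$, then for every $m$ the element $a_{f,m,\ell}\in R^+_{m,\ell}$ is divisible by $\ell$. For $m\in \mathbb{N}(\ell)$, the cyclotomic norm relation $N^{m\ell^n}_m(\varepsilon_{m\ell^n})=\varepsilon_m$ combined with the distribution relation for $f^{1+\tau}$ gives, after letting $n\to \infty$, the identity $f^{1+\tau}(m)=\varepsilon_m^{a_{f,m,\ell}}$ in $V(m)_\ell$. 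Since $V(m)$ is torsion-free and finitely generated, this forces $f^{1+\tau}(m)$ to lie in $V(m)^\ell$ whenever $\ell\mid m$.

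Next I would fix any $m_0$ with $u_0:=f^{1+\tau}(m_0)\neq 1$; such an $m_0$ exists by Theorem~\ref{reduction result}(i). For each prime $\ell\nmid m_0$ in the set $S:=\{\ell: \kappa(f)\in \ell\widehat R(1+\tau)\}$, applying the previous step to $m_0\ell\in \mathbb{N}(\ell)$ and then the norm $N^{m_0\ell}_{m_0}$ produces
\[ (1-\sigma_\ell)\, u_0 \in \ell V(m_0), \]
using that $N^{m_0\ell}_{m_0}$ sends $\ell$-th powers to $\ell$-th powers and that $N^{m_0\ell}_{m_0}(f^{1+\tau}(m_0\ell)) = f^{1+\tau}(m_0)^{1-\sigma_\ell}$. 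Since $\sigma_\ell\in G_{m_0}$ depends only on $\ell$ modulo $m_0$, for each class $a\in (\mathbb{Z}/m_0)^\times$ the element $v_a:=(1-\sigma_a)u_0$ of the finitely generated free abelian group $V(m_0)$ is fixed, and whenever $v_a\neq 0$ only the finitely many primes $\ell\equiv a\pmod{m_0}$ dividing the content of $v_a$ in $V(m_0)$ can satisfy $v_a\in \ell V(m_0)$. Hence, up to finitely many exceptions, the primes in $S$ lie in those residue classes for which $\sigma_a$ fixes $u_0$, and Dirichlet's theorem then yields
\[ \delta(S) \leq \frac{|H_{m_0}|}{|G^+_{m_0}|}, \qquad H_{m_0} := \mathrm{Stab}_{G^+_{m_0}}(u_0). \]

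The main obstacle is then to show that this ratio can be made arbitrarily small, i.e.\ that $\inf_m |H_m|/|G^+_m| = 0$ as $m$ ranges over those with $f^{1+\tau}(m)\neq 1$. I would argue by contradiction: supposing $|G^+_m|/|H_m|\leq N$ throughout, choose $m_0$ with $u_{m_0}\neq 1$ having at least two distinct prime divisors, fix a prime $q\mid m_0$, and work along the tower $m_n := m_0 q^n$. The norm-compatibility $N^{m_{n+1}}_{m_n}(u_{m_{n+1}}) = u_{m_n}$ (valid because $q\mid m_n$) keeps $u_{m_n}\neq 1$ for every $n$. A direct check that $\pi^{m_{n+1}}_{m_n}(H_{n+1})\subseteq H_n$ shows $|G^+_{m_n}|/|H_n|$ is nondecreasing, and being bounded by $N$ it stabilizes at some index $n_1$. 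After stabilization $|H_{n+1}| = q|H_n|$, forcing $H_n$ to contain $\ker(G^+_{m_n}\to G^+_{m_{n_1}})$; hence $u_{m_n}$ lies in the fixed subgroup, which (using that $m_{n_1}$ is not a prime power) coincides with $V(m_{n_1})$. On the free abelian group $V(m_{n_1})$ the norm $N^{m_{n+1}}_{m_n}$ is just multiplication by $q$, so $u_{m_{n_1}} = u_{m_{n_1+k}}^{q^k}$ for all $k\ge 0$; since $\bigcap_k q^k V(m_{n_1}) = 0$, this forces $u_{m_{n_1}}=1$, contradicting nontriviality along the tower. Combining this with the Chebotarev bound then gives $\delta(S)=0$.
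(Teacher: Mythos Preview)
Your proof is correct and follows essentially the same approach as the paper: both arguments translate the condition $\kappa(f)\in\ell\widehat R(1+\tau)$ into $(1-\sigma_\ell)f^{1+\tau}(m)\in V(m)^\ell$, use Chebotarev/Dirichlet to bound the density of such $\ell$ by the proportion of Frobenius classes fixing $f^{1+\tau}(m)$, and then climb a $q$-tower to drive this proportion to zero. The only cosmetic difference is that you package the Frobenius-counting via the stabilizer $H_m$ and its index, whereas the paper counts the allowed $\sigma$ directly by bounding the $p$-part of their order; your use of Lemma~\ref{useful 1} to secure an $m_0$ with at least two prime divisors (needed so that $V(m_n)^{\Gal(\QQ(m_n)^+/\QQ(m_{n_1})^+)}=V(m_{n_1})$) is a minor point worth making explicit.
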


\begin{proof} Set $f' := f^{1+\tau}$. Then, since $f$ is not torsion-valued, Lemma \ref{reduction to torsion lemma} implies there exists $m$ in $\mathbb{N}^*$ such that $f'(m) \not= 1$. By using Lemma \ref{useful 1} we can also assume that $m$ is divisible by an odd prime $p$.

Then there exists an $n_0$ in $\mathbb{N}$ such that for all $n > n_0$ one has
$f'(mp^n)\notin \QQ(mp^{n-1})^+$. Indeed, if this is not true, then the norm compatibility of the elements $\{f'(mp^a)\}_{a \ge 0}$ implies  $f'(m)\in V(m)^{p^t}$ for arbitrarily large integers $t$ and, since $V(m)$ is torsion-free, this contradicts the assumption that $f'(m)\not= 1$.

Fix $n > n_0$ and an element $\sigma$ of $G_{mp^{n}}^+$ and write $\mathcal{P}_\sigma$ for the set of primes $q$ that do not divide $m$ and are such that the restriction of $\sigma_q$ to $\QQ(mp^{n})^+$ is equal to $\sigma$.

Then for any $q$ in the intersection of $\mathcal{P}_\sigma$ with $\mathcal{P}_{f} := \{\ell: \kappa(f) \in \ell\cdot \widehat{R}(1+\tau)\}$, the  distribution relation
\[ f'(mp^n)^{1-\sigma} = f'(mp^n)^{1-\sigma_q} = {\rm N}^{mp^nq}_{mp^n}(f'(mp^nq)) = {\rm N}^{mp^nq}_{mp^n}((\varepsilon_{mp^nq})^{\kappa(f)_q})\]
in $V(mp^n)_q$ implies that $f'(mp^n)^{1-\sigma}$ belongs to $V(mp^n)^q$.

Thus, if $\sigma$ is such that $\mathcal{P}_\sigma\cap \mathcal{P}_{f}$ is infinite, then $f'(mp^n)^{1-\sigma} = 1$. In any such case, the fact that $f'(mp^n)\notin \QQ(mp^{n-1})^+$ therefore implies that the subgroup generated by $\sigma$ must be disjoint from the cyclic subgroup $\Gal(\QQ(mp^{n})^+/\QQ(m'p)^+)$ of $G_{mp^n}^+$, where we write $m'$ for the maximal divisor of $m$ that is prime to $p$.

In particular, if the $p$-exponent of $G_{m'p}^+$ is $p^{n_1}$, then for any $n > n_0 + n_1$ the maximal power of $p$ that divides the order of $\sigma$ is at most $p^{n_1}$ whilst the maximal power of $p$ that divides the exponent of $G_{mp^n}^+$ is at least $p^{n-1}$.

This implies that $\#\{\sigma \in G_{mp^n}^+: \mathcal{P}_\sigma \cap \mathcal{P}_f \,\text{ is infinite }\}$
is at most $\#G_{mp^n}^+/p^{n-1-n_1}$ and hence, by the Tchebotarev Density Theorem, that the density of $\mathcal{P}_f$ is at most $p^{n_1+1-n}$.

%
%

The claimed result therefore follows from the fact that $p^{n_1+1-n}$ tends to $0$ as $n$ tends to infinity. \end{proof}

\subsection{}In this section we derive from Theorem \ref{main result} an explicit criterion for a distribution to be cyclotomic. This criterion is reminiscent of the archimedean `boundedness' criterion that Coleman uses in \cite{coleman2} to characterise cyclotomic units in $p$-power conductor abelian fields.

To do this we recall that, by the argument in \S\ref{pd section}, for each function $f$ in $\mathcal{F}^{\rm d}$ and each $m$ and $n$ in $\mathbb{N}$ there exists a (non-unique) element $a_{m,n}(f)$ in $R^+_{mp^n,p}$ such that in $V(mp^n)_p$ one has
\begin{equation}\label{key eq 2} f^{1+\tau}(mp^n) = (\varepsilon_{mp^n})^{a_{m,n}(f)}.\end{equation}

For any natural number $k$ with $k \le n$ and any element $x$ of $R^+_{mp^n,p}$ we write $|x|_k$ for the unique integer in $\{ a: 0\le a < p^{n-k}\}$ that is equal to the coefficient of the trivial element of $G_m^+$ in the standard representation of the image of $x$ under the composite map
\[ R^+_{mp^n,p} \to R^+_{m,p} \to \ZZ_p/(p^{n-k})\otimes _{\ZZ_p} R^+_{m,p} = \ZZ/(p^{n-k})\otimes_{\ZZ} R^+_{m},\]
where the first map is induced by $\pi^{mp^n}_m$ and the second is the natural reduction map.

We shall then say that a given subset of $\mathcal{F}^{\rm d}$ is `$p$-bounded' if for each of its elements $f$, each $m$ in $\mathbb{N}(p)$ and any sufficiently large $k$ at least one of the sets $\{ |a_{m,n}(f)|_k\}_{n> k}$ and $\{ |-\!a_{m,n}(f)|_k\}_{n > k}$ is bounded. (See Remark \ref{p-b indep} below).

\begin{theorem}\label{new criterion} Fix $f$ in $\mathcal{F}^{\rm d}$. Then $f$ is cyclotomic if and only if for any, and therefore for every, prime $p$ the set of distributions $\{f^\sigma: \sigma \in \Gal(\QQ^c/\QQ)\}$ is $p$-bounded.
\end{theorem}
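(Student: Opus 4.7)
The strategy uses Theorem \ref{main result} to translate cyclotomicity of $f$ into the condition $\kappa(f) \in R(1+\tau) \subseteq \widehat{R}(1+\tau)$, and then exploits the explicit description from Remark \ref{explicit kappa}: under the canonical identification $\widehat{R}(1+\tau) \cong \prod_p \varprojlim_m R^+_{m,p}$ that sends $1+\tau$ to $1$, $\kappa(f)$ corresponds to the family $(a_{f,m,p})_{m \in \mathbb{N}(p),\, p}$ in which $a_{f,m,p} = \lim_n \pi^{mp^n}_m(a_{m,n}(f))$ in $R^+_{m,p}$.

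The first step is to interpret the $\sigma$-conjugates at the level of coefficients. Since $\kappa$ is a homomorphism of $R$-modules and $\Gal(\QQ^c/\QQ)$ maps canonically to $R^\times$, one has $\kappa(f^\sigma) = \sigma \cdot \kappa(f)$, and projecting to level $m$ gives $a_{f^\sigma,m,p} = \sigma_m \cdot a_{f,m,p}$, where $\sigma_m$ denotes the image of $\sigma$ in $G_m^+$. Expanding $a_{f,m,p} = \sum_{g \in G_m^+} c_g(m,p)\,g$ with $c_g(m,p) \in \ZZ_p$, the identity coefficient of $\sigma_m \cdot a_{f,m,p}$ is $c_{\sigma_m^{-1}}(m,p)$. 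The definition of $a_{f,m,p}$ as a $p$-adic limit implies that, for all $n$ sufficiently large relative to $k$, the integer $|a_{m,n}(f^\sigma)|_k$ is the unique representative in $\{0, \ldots, p^{n-k}-1\}$ of $c_{\sigma_m^{-1}}(m,p) \bmod p^{n-k}$. Since for any $c \in \ZZ_p$ both sequences $\{c \bmod p^N\}_N$ and $\{-c \bmod p^N\}_N$ in $\{0, \ldots, p^N-1\}$ are unbounded unless $c \in \ZZ$, the $p$-boundedness of $\{f^\sigma\}_\sigma$ is equivalent to $c_g(m,p) \in \ZZ$ for every $g \in G_m^+$ (using that restriction $\Gal(\QQ^c/\QQ) \to G_m^+$ is surjective, so $\sigma_m^{-1}$ sweeps out $G_m^+$), i.e.\ to $a_{f,m,p} \in R^+_m$ for every $m \in \mathbb{N}(p)$.

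To conclude that $\kappa(f) \in R(1+\tau)$, I would invoke Theorem \ref{first step}: for each $m$ there exists $r_{f,m} \in \ZZ_{(m)} \otimes R_m$ with $f(m) = \Phi(m)^{r_{f,m}}$, whose image $\bar r_{f,m} \in \ZZ_{(m)} \otimes R^+_m / I_m$ represents the class of $a_{f,m,p}$ modulo $I_{m,p}$ simultaneously for every prime $p$. Once $a_{f,m,p} \in R^+_m$, it serves as a canonical integer lift of $\bar r_{f,m}$, and the norm-compatibility of the values $f^{1+\tau}(m)$ forces these integer lifts to satisfy $\pi^{m'}_m(a_{f,m',p}) = a_{f,m,p}$, so that they assemble into a single element of $\varprojlim_m R^+_m \cong R(1+\tau)$ which realises $\kappa(f)$. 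The converse direction is immediate: for $f = \Phi^r$ one has $a_{f,m,p} = \bar r_m \in R^+_m$ with coefficients bounded by those of $r$, so the $p$-boundedness condition is automatic and manifestly $p$-independent, which also accounts for the ``any, and therefore every'' phrasing in the statement.

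The principal obstacle lies in the third step: guaranteeing that the integer values $a_{f,m,p}$ coming from different primes $p$ genuinely coincide in $R^+_m$, and that the mod-$I_m$ ambiguity in the rational lift $\bar r_{f,m}$ can be uniformly eliminated. Here the vanishing of $\varprojlim_b I_{mp^b,p}$ from Lemma \ref{derived proj limit lemma} is crucial, since it ensures that the limit defining $a_{f,m,p}$ is truly unambiguous; combined with the explicit description of $I_m$ given by Lemma \ref{useful 3}, this provides the Galois-theoretic rigidity needed to match integer lifts across the different primes and to produce the required element of $R(1+\tau)$.
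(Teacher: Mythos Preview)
Your first two steps are essentially the same as the paper's: reducing cyclotomicity to a condition on $\kappa(f)$, passing to coefficients via the Galois action $\kappa(f^\sigma)=\sigma\cdot\kappa(f)$, and characterising when a $p$-adic integer lies in $\ZZ$ by boundedness of its mod-$p^N$ representatives. Where your argument diverges is the third step, and this is where a genuine gap appears.

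You correctly establish that $p$-boundedness of $\{f^\sigma\}$ is equivalent to $a_{f,m,p}\in R^+_m$ for every $m\in\mathbb{N}(p)$, i.e.\ to $\kappa(f)_p$ lying in the image of $R(1+\tau)$ inside $\widehat R^p(1+\tau)$. But you then try to deduce $\kappa(f)\in R(1+\tau)\subset\widehat R(1+\tau)$ by matching the integer elements $a_{f,m,p}$ across \emph{different} primes $p$. This cross-prime patching is not needed, and your proposed resolution via Theorem~\ref{first step} and Lemma~\ref{derived proj limit lemma} does not clearly achieve it: Theorem~\ref{first step} only gives a class in $\ZZ_{(m)}\otimes R^+_m/I_m$, and eliminating the $I_m$-ambiguity uniformly in $p$ is precisely the difficulty you have identified but not resolved.

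The paper bypasses this entirely by invoking Theorem~\ref{key real}(i): the restriction map $\kappa_p\colon \overline{\mathcal F}^{\rm d}/\overline{\mathcal F}^{\rm c}\to \overline{\mathcal F}^{\rm d}_{(p)}/\overline{\mathcal F}^{\rm c}_{(p)}$ is injective. Hence $f^{1+\tau}\in(1+\tau)\mathcal F^{\rm c}$ if and only if $\iota_p(f^{1+\tau})\in(1+\tau)\mathcal F^{\rm c}_{(p)}$, which is exactly the condition $\kappa(f)_p\in R(1+\tau)$ for that \emph{single} prime $p$. This is what makes the ``any, and therefore every'' clause immediate, and it removes any need to compare the lifts at different primes. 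Inserting this appeal to Theorem~\ref{key real}(i) in place of your third step would complete your argument along the same lines as the paper.
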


\begin{proof} It is enough to fix a prime $p$ and show that $f$ is cyclotomic if and only if the set $\{f^\sigma: \sigma \in \Gal(\QQ^c/\QQ)\}$ is $p$-bounded.

At the outset, we note that the diagram in Theorem \ref{main result} implies directly that $f$ is cyclotomic if and only if $f^{1+\tau}$ belongs to $(1+\tau)\mathcal{F}^{\rm c}$.

Hence, in view of Theorem \ref{key real}(i), one knows that $f$ is cyclotomic if and only if the restriction $\iota_p(f^{1+\tau})$ of $f^{1+\tau}$  belongs to $(1+\tau)\mathcal{F}_{(p)}^{\rm c}$.

Now the definition (given in \S\ref{proof section}) of the map $\kappa$  in Theorem \ref{main result} implies that the projection $\kappa(f)_p$ of $\kappa(f)$ to $\widehat{R}^p(1+\tau)$ is the unique element with the property that for each $m$ in $\mathbb{N}(p)$ one has $f^{1+\tau}(m) = \Phi(m)^{\kappa(f)_p}$ in $V(m)_p$. It follows that $f$ is cyclotomic if and only if $\kappa(f)_p$ belongs to $R(1+\tau)$.

We next recall from Remark \ref{explicit kappa} that, for any choice of elements $a_{m,b}(f)$ as in (\ref{key eq 2}), the sequence  $(\pi^{mp^b}_m(a_{m,b}(f)))_{b\in \mathbb{N}}$ is convergent in $R_{m,p}^+$ and that $\kappa(f)_p$ is the element of the limit $\widehat{R}^p(1+\tau)\cong \varprojlim_m R_{m,p}^+$ that corresponds to $(\lim_{b\to \infty}\pi^{mp^b}_m(a_{m,b}(f)))_m$.

This implies that $\kappa(f)_p$ belongs to $R(1+\tau)$ if and only if for every $m$ in $\mathbb{N}(p)$ the limit $\lim_{b\to \infty}\pi^{mp^b}_m(a_{m,b}(f))$ belongs to $R_m^+$.

Now, if for each $b$ in $\mathbb{N}$ and $g$ in $G_m^+$, we define $p$-adic integers $c_{m,b,g}(f)$ by the equality
\[ \pi^{mp^b}_m(a_{m,b}(f)) = \sum_{g \in G_m^+} c_{m,b,g}(f)\cdot g,\]
then each sequence $(c_{m,b,g}(f))_b$ is convergent in $\ZZ_p$ and one has
\[ \lim_{b\to \infty}\pi^{mp^b}_m(a_{m,b}(f)) = \sum_{g \in G_m^+}(\lim_{b\to \infty}c_{m,b,g}(f))\cdot g.\]

In addition, for any element $g$ of $G_m^+$ and any element $\sigma$ of $\Gal(\QQ^c/\QQ)$ that restricts on $\QQ(m)^+$ to give $g^{-1}$ one can take the elements $a_{m,b}(f^\sigma)$ in (\ref{key eq 2}) to be $\sigma\cdot a_{m,b}(f)$ and in this way compute that $c_{m,b,g}(f) = c_{m,b,e}(f^\sigma)$ where $e$ is the identity element of $G_m^+$.

This observation reduces us to showing that for every $m$ in $\mathbb{N}(p)$ and every $f$ in $\mathcal{F}^{\rm d}$ the limit $c:= \lim_{b\to \infty}c_{m,b,e}(f)$ belongs to $\ZZ$ if and only if for any sufficiently large integer $k$ at least one of the sets $\{ |a_{m,n}(f)|_k\}_{n> k}$ and $\{ |-\!a_{m,n}(f)|_k\}_{n > k}$ is bounded.

If, firstly, $c$ is a non-negative integer, then, since $c$ is the coefficient of $e$ in the limit $\lim_{n\to\infty}\pi^{mp^n}_m(a_{m,n}(f))$ one must have $|a_{m,n}(f)|_k = c$ for all sufficiently large $n$ and so the set $\{|a_{m,n}(f)|_k\}_{n>k}$ is indeed bounded.

Similarly, if $c$ is a negative integer, then $|-a_{m,n}(f)|_k$ must be equal to $-c$ for all sufficiently large $n$ and so $\{|-\!a_{m,n}(f)|_k\}_{n>k}$ is bounded.

To prove the converse we fix $k$ such that either $\{ |a_{m,n}(f)|_k\}_{n>k}$ or $\{ |-\!a_{m,n}(f)|_k\}_{n>k}$ is bounded and choose a strictly increasing sequence of natural numbers $(n_i)_i$ with $n_i > k$ and $c_{m,n_i,e}(f) \equiv c$ (mod $p^{n_i-k}$) for each $i$.

Then, with $c(i)$ denoting the unique integer in $\{a:0 \le a < p^{n_i-k}\}$ with $c(i) \equiv c_{m,n_i,e}(f)$ (mod $p^{n_i-k}$), one finds that the element $(c(i))_i$ belongs to $\ZZ_p = \varprojlim_i \ZZ/(p^{n_i-k})$ and is equal to $c$.

Now, if  $\{ |a_{m,n}(f)|_k\}_{n>k}$ is bounded, then the definition of $|a_{m,n}(f)|_k$ ensures that the increasing sequence $(c(i))_i$ is eventually constant. This implies that $c$ is equal to $c(i)$ for any large enough value of $i$ and so is a non-negative integer.

Similarly, if $\{ |-\!a_{m,n}(f)|\}_{n >k}$ is bounded, then we can repeat the above argument with each term $a_{m,n}(f)$ replaced by $-\!a_{m,n}(f)$ to deduce that $-c$ is a non-negative integer, as required to complete the proof.
\end{proof}

\begin{remark}\label{p-b indep}{\em It is easily seen that the condition of being $p$-bounded is independent of the precise choice of elements $a_{m,n}(f)$ that occur in the equality (\ref{key eq 2}). To be specific, if one fixes $m$ and considers any other choice $\{a'_{m,n}(f)\}_n$ of such elements, then for each $n$ one has $a_{m,n}(f) - a'_{m,n}(f) \in I_{mp^n,p}$. In particular, if one chooses $k$ to be greater than the integer $b_0$ that occurs in the proof of Lemma \ref{derived proj limit lemma}, then the latter argument shows that for every $n > k$ one has
 $\pi^{mp^n}_m(a_{m,n}(f) - a'_{m,n}(f)) \in p^{n-k}\cdot R_{m,p}^+$ and hence that $|a_{m,n}(f)|_k = |a'_{m,n}(f)|_k$.}\end{remark}

\begin{remark}{\em For the norm compatible family discussed in Lemma \ref{nc-nd exam}(i) it can be shown that there is no natural number $k$ for which either of the sets $\{ |\Pi_n|_k\}_{n>k}$ or $\{ |-\!\Pi_n|_k\}_{n>k}$ is bounded, where each $\Pi_n$ is regarded as an element of $R_{qp^n}^+$. In view of Theorem \ref{new criterion}, Coleman's Conjecture therefore predicts that no such family can arise as the restriction of a distribution. However, aside from the special case considered in Lemma \ref{nc-nd exam}(ii), we have not yet been able to verify this prediction. }\end{remark}

\subsection{}In this section we give a concrete description of the image of the map $\kappa$ in Theorem \ref{main result} and thereby derive an alternative criterion for the validity of Coleman's Conjecture.

We shall also then give an interpretation of this criterion in terms of the Galois structure of Selmer groups of $\mathbb{G}_m$.

For each $m$ in $\mathbb{N}^*$ we consider the subgroup
\[ V'(m):= (E(m)')^{1+\tau}\]
of the (torsion-free) group $V(m)$.


\subsubsection{}We set $\Gamma_m := G_m^+$ and recall the idempotent $e_m$ of $\QQ[\Gamma_m]$ defined in (\ref{n idem}). Then Lemma \ref{useful 3} implies that
$\varepsilon_m = e_m\cdot\varepsilon_m$ in $\QQ\otimes_\ZZ V'(m)$ and so one obtains a $\ZZ[\Gamma_m]$-submodule of $\QQ[\Gamma_m]e_m$ by setting
\[ J_m :=  \{x \in \QQ[\Gamma_m]e_m: \varepsilon_m^x\in V'(m)\}.\]

We also write $\widehat{\ZZ}$ for the profinite completion of $\ZZ$ and define a submodule of $\widehat{\ZZ}[\Gamma_m]$ by setting
\[ X_m := \prod_{\ell \mid m} I_{m,\ell}\cdot \prod_{\ell\nmid m}\ZZ_\ell[\Gamma_m].\]

In the next result (and its proof) we identify $\ZZ[G_m](1+\tau)$ with $\ZZ[\Gamma_m]$ via the map sending each $x(1+\tau)$ to the image of $x$ under the projection map $\ZZ[G_m] \to \ZZ[\Gamma_m]$. In this way we regard $J_m$ and $X_m$ as subsets of $\QQ[G_m](1+\tau)$ and $\widehat{\ZZ}[G_m](1+\tau)$ respectively.

\begin{theorem}\label{image result}\
\begin{itemize}
\item[(i)] In $\QQ\otimes_\ZZ\widehat R(1+\tau)$ one has
\[2\bigl(\widehat R(1+\tau) \cap \prod_{m\in \mathbb{N}^\ast}( J_m + X_m)\bigr) \subseteq \im(\kappa) \subseteq \widehat R(1+\tau) \cap \prod_{m\in \mathbb{N}^\ast}( J_m + X_m).\]
\item[(ii)] If Coleman's Conjecture is valid, then
\[ 2\bigl(\widehat R(1+\tau) \cap \prod_{m\in \mathbb{N}^\ast}( J_m + X_m)\bigr) \subseteq R(1+\tau).\]
\item[(iii)] After inverting $2$, Coleman's Conjecture is valid if and only if 
\[\ZZ\bigl[1/2\bigr]\otimes_\ZZ R(1+\tau) = \ZZ\bigl[1/2\bigr]\otimes_\ZZ\bigl(\widehat R(1+\tau) \cap \prod_{m\in \mathbb{N}^\ast}( J_m + X_m)\bigr).\]
\end{itemize}\end{theorem}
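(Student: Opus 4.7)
The plan is to establish part (i) first and then derive (ii) and (iii) as formal consequences via Theorem \ref{main result}. The key preliminary observation, from Remark \ref{more general ann}, is that for every $f \in \mathcal{F}^{\rm d}$ and $m \in \mathbb{N}^\ast$ there is a unique $j_{f,m} \in \mathbb{Q}[\Gamma_m]e_m$ satisfying $f(m)^{1+\tau} = \varepsilon_m^{j_{f,m}}$ in $\mathbb{Q}\otimes_\mathbb{Z}V(m)$, and by construction $j_{f,m} \in J_m$.

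For the upper inclusion of (i), I would show $\kappa(f)_m - j_{f,m} \in X_m$ inside $\mathbb{Q}\otimes_\mathbb{Z}\widehat{\mathbb{Z}}[\Gamma_m]$, prime by prime. For $\ell \mid m$, Remark \ref{explicit kappa} gives $\varepsilon_m^{\kappa(f)_{\ell,m}} = f(m)^{1+\tau} = \varepsilon_m^{j_{f,m}}$ in $V(m)_\ell$, so $\kappa(f)_{\ell,m} - j_{f,m}$ annihilates $\varepsilon_m$ in $\mathbb{Q}_\ell \otimes V(m)$; by Lemma \ref{useful 3} it lies in $\mathbb{Q}_\ell \otimes I_m$, and a further argument exploiting the $\mathbb{Z}_\ell$-integrality of $\kappa(f)_{\ell,m}$ together with the explicit form of $e_m$ from (\ref{n idem}) upgrades this to membership in $I_{m,\ell}$. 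For $\ell \nmid m$, I would use the distribution relation (\ref{basic facts}) at level $m\ell$ together with the norm identity $N^{m\ell}_m(\varepsilon_{m\ell}) = \varepsilon_m^{1-\sigma_\ell}$ (applied to Remark \ref{explicit kappa} at level $m\ell$) to control $\kappa(f)_{\ell,m} = \pi^{m\ell}_m(\kappa(f)_{\ell,m\ell})$ and deduce the required $\mathbb{Z}_\ell[\Gamma_m]$-integrality of the difference.

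For the reverse inclusion of (i), given $y \in \widehat R(1+\tau) \cap \prod_m (J_m + X_m)$, I would construct $f \in \mathcal{F}^{\rm d}$ with $\kappa(f) = 2y$. At each $m$ the decomposition $y_m = j_m + x_m$ with $j_m \in J_m$ and $x_m \in X_m$ is unique (one checks $J_m \cap X_m = 0$: the $\ell$-components for $\ell \mid m$ lie in $\mathbb{Q}_\ell[\Gamma_m]e_m$ and $\mathbb{Z}_\ell[\Gamma_m](1-e_m)$ respectively, which intersect trivially, and rationality of $J_m$ then forces global vanishing). Moreover $j_m = y_m - x_m$ lies in $\mathbb{Q}[\Gamma_m]\cap\widehat{\mathbb{Z}}[\Gamma_m] = \mathbb{Z}[\Gamma_m] = R_m^+$, so one can choose a lift $\tilde j_m \in R_m$ under the surjection $R_m \to R_m^+$ and set $f(m) := (1-\zeta_m)^{2\tilde j_m}$, which gives $f(m)^{1+\tau} = \varepsilon_m^{2j_m}$. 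Using coherence of $y$ in $\widehat R$ together with the norm identities for $1-\zeta_m$ and the defining property of $X_m$ (its components annihilate $\varepsilon_m$ when $\ell \mid m$ and are $\ell$-adically integral when $\ell \nmid m$), one then verifies that $f$ satisfies the distribution relations (\ref{basic facts}); the equality $\kappa(f) = 2y$ follows by construction. The factor of $2$ absorbs the $2$-primary lifting ambiguity across the surjection $R_m \to R_m^+$, reflecting the same $\mathbb{F}_2$-cohomological phenomenon highlighted in Remark \ref{tate coh rem}.

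Parts (ii) and (iii) are then formal. Coleman's Conjecture forces $\mathcal{F}^{\rm d}_{\rm tf} = \mathcal{F}^{\rm c}$ (since $\mathcal{D} = \mathcal{F}^{\rm d}_{\rm tor}$ and $\mathcal{F}^{\rm c}$ is torsion-free), whence $\im(\kappa) = R(1+\tau)$ by Theorem \ref{main result}; (ii) is then the upper inclusion of (i) under this equality. For (iii), inverting $2$ collapses the two inclusions of (i) into the single equality $\im(\kappa)[1/2] = (\widehat R(1+\tau) \cap \prod_m (J_m + X_m))[1/2]$, while Theorem \ref{main result} identifies the validity of Coleman's Conjecture after inverting $2$ with $\im(\kappa)[1/2] = R(1+\tau)[1/2]$; combining these yields the stated equivalence. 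The main obstacle is the careful prime-by-prime bookkeeping needed in part (i), and in particular the failure of $\pi^{m\ell}_m$ to send $e_{m\ell}$ to $e_m$ when $\ell \nmid m$: this is precisely where the asymmetric structure of $X_m$ (with $I_{m,\ell}$ at primes dividing $m$ but all of $\mathbb{Z}_\ell[\Gamma_m]$ at other primes) becomes indispensable, and where the tension between rational and profinite integrality requires the factor of $2$.
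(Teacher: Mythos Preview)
Your derivation of (ii) and (iii) from (i) via Theorem \ref{main result} matches the paper, and your treatment of the upper inclusion in (i) is close in spirit to the paper's (the paper checks only the primes $\ell\mid m$ via the projection $\pi_m$ and then asserts membership in $J_m+X_m$; your separate handling of $\ell\nmid m$ is a reasonable elaboration).

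There is, however, a genuine gap in your construction for the reverse inclusion of (i). You propose $f(m):=(1-\zeta_m)^{2\tilde j_m}$ for chosen lifts $\tilde j_m\in R_m$ of $j_m\in R_m^+$. The difficulty is that the family $(j_m)_m$ is not coherent in $\varprojlim_m R_m^+$: unwinding the coherence of $y$ in $\widehat R$ together with the $X_m$-contributions only yields $\pi^{m\ell}_m(j_{m\ell})-j_m\in I_m$, not equality. That congruence is exactly what is needed to make $m\mapsto\varepsilon_m^{j_m}$ a distribution, but your $f(m)$ factors as $\varepsilon_m^{j_m}\cdot(-\zeta_m)^{\tilde j_m}$, and the root-of-unity factor $(-\zeta_m)^{\tilde j_m}$ depends on the non-canonical lift $\tilde j_m$. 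There is no reason these torsion factors should themselves satisfy the norm relations (\ref{basic facts}), so your $f$ need not lie in $\mathcal{F}^{\rm d}$.

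The paper sidesteps this by working intrinsically in $V'(m)$: it sets $f_m$ equal to the unique element of $V'(m)$ whose diagonal image in $\mathbb{Z}_{\langle m\rangle}\otimes_\ZZ V'(m)$ is $\varepsilon_m^{\pi_m(y)}$ (uniqueness uses that $V'(m)$ is torsion-free, so $\Delta_m$ is injective). The distribution relations for this $f$ then follow directly from the coherence of $y$ and the standard norm formulae for $\varepsilon_m$, with no lifts to choose. The factor of $2$ appears simply because the resulting $f$ is real-valued, whence $f^{1+\tau}=f^2$ and $\kappa(f)=2y$; it is not a lifting artefact for $R_m\to R_m^+$ as you suggest.
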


\begin{proof} Claims (ii) and (iii) both follow directly from claim (i) and the commutative diagram given in Theorem \ref{main result}.

To prove claim (i) we set $\ZZ_{\langle m\rangle} := \prod_{\ell \mid m}\ZZ_\ell$ for each natural number $m$. We also write $\Delta_m$ for the diagonal map $V'(m) \to \ZZ_{\langle m\rangle}\otimes_\ZZ V'(m)$ (which is injective since $V'(m)$ is torsion-free) and $\pi_m$ for the natural
projection $\widehat{R}(1+\tau) \to \ZZ_{\langle m\rangle}[G_m](1+\tau)= \ZZ_{\langle m\rangle}[\Gamma_m]$.

Then for each $f$ in $\mathcal{F}^{\rm d}$ and each $m$ in $\mathbb{N}^\ast$ the element $f(m)^{1+\tau}$ belongs to $V'(m)$ and so Remark \ref{more general ann} implies the existence of an element $j_m = j_{f,m}$ in $J_m$ with $f(m)^{1+\tau} = \varepsilon_m^{j_m}$ in $\QQ\otimes_\ZZ V'(m)$.

Thus, since the definition of $\kappa$ implies that in $\ZZ_{\langle m\rangle}\otimes_\ZZ V'(m)$ one has
\[ \Delta_m(f(m)^{1+\tau}) = \varepsilon_m^{\pi_m(\kappa(f))}, \]
the difference $\pi_m(\kappa(f))-j_m$ must belong to the submodule $\prod_{\ell \mid m}I_{m,\ell}$ of $\ZZ_{\langle m\rangle}[\Gamma_m]$.

This implies that the projection of $\kappa(f)$ to $\widehat{\ZZ}[G_m](1+\tau) \subset \QQ\otimes_\ZZ \widehat{\ZZ}[G_m](1+\tau)$ belongs to the submodule $J_m + X_m$. 
%
%
 Since this is true for all $f$ and all $m$ it follows that $\im(\kappa)$ is contained in $\widehat R(1+\tau) \cap \prod_{m}( J_m + X_m)$, as claimed.

To complete the proof of claim (i) we must show that for any element $y = (y_m)_m$ in $\widehat R(1+\tau) \cap \prod_{m}( J_m + X_m)$ one has $2\cdot y\in \im(\kappa)$.

Now for each $m$ in $\mathbb{N}^\ast$ one has $y_m = j_m + x_m$ with $j_m\in J_m$ and $x_m \in X_m$ and hence also
\[ \varepsilon_m^{\pi_m(y)} = \varepsilon_m^{j_m}\in \im(\Delta_m) \subseteq \ZZ_{\langle m\rangle}\otimes_\ZZ V'(m).\]

We then write $f_m$ for the unique element of $V'(m)$ with $\Delta_m(f_m) = \varepsilon_m^{\pi_m(y)}$ and claim that the unique $\Gal(\QQ^c/\Q)$-equivariant map $f: \mu_\infty^*\to \QQ^{c,\times}$ that sends $\zeta_m$ to $f_m$ belongs to $\mathcal{F}^{\rm d}$ and is such that $\kappa(f) = 2\cdot y$.

To prove $f$ belongs to $\mathcal{F}^{\rm d}$ we must show that it satisfies the distribution relations (\ref{basic facts}). To do this we fix $m$ in $\mathbb{N}^*$ and a prime $\ell$ and note there is a commutative diagram
\[
\xymatrix{ V'(m\ell)\ar[d] \ar@{^{(}->}[r] & \ZZ_{\langle m\rangle}\otimes_\ZZ V'(m\ell) \ar[d]\\
V'(m) \ar@{^{(}->}[r]^{\!\!\!\!\!\!\!\!\!\!\!\Delta_m} & \ZZ_{\langle m\rangle}\otimes_\ZZ V'(m)}
\]
in which the upper horizontal arrow denotes the natural diagonal map and the vertical arrows are induced by ${\rm N}^{m\ell}_m$.

By using this diagram one computes in $\ZZ_{\langle m\rangle}\otimes_\ZZ V'(m)$ that
\[ \Delta_m({\rm N}^{m\ell}_m(f(m\ell))) = {\rm N}^{m\ell}_m(\varepsilon_{m\ell}^{y_{m\ell}}) = {\rm N}^{m\ell}_m(\varepsilon_{m\ell})^{y_{m}}
 = \begin{cases} \Delta_m(f(m)), &\text{ if $\ell \mid m$,}\\
\Delta_m(f(m)^{1-\sigma_\ell}), &\text{ if $\ell\nmid m$.}\end{cases}
  \]
Here the second equality is valid because $y$ belongs to $\widehat R(1+\tau)$ and the third follows by combining the standard distribution relations for $\varepsilon_{m\ell}$ with the fact that in $\ZZ_{\langle m\rangle}\otimes_\ZZ V'(m)$ one has
$\Delta_m(f(m)) = \Delta_m(f_m) = \varepsilon_{m}^{\pi_{m}(y)}$.

This shows $f$ belongs to $\mathcal{F}^{\rm d}$. To show $\kappa(f) = 2\cdot y$ it is then sufficient to note that for every prime $p$ and every  $m$ in $\mathbb{N}(p)$ one has  $f(m)^{1+\tau} = f_m^{1+\tau} = f_m^2 = \varepsilon_{m}^{2\cdot y}$ in $V(m)_p$.
\end{proof}

\begin{remark}\label{2 difficult}{\em The difficulty with the factor of $2$ in Theorem \ref{image result}(i) arises because knowing that a distribution $f$ is valued in the groups $(E(m)')^{1+\tau}$ need not, a priori, imply that it belongs to $(1+\tau)\mathcal{F}^{\rm d}$ (and see also Remark \ref{tate coh rem} in this regard). }\end{remark}

\subsubsection{}In this section we explain the connection between the result of Theorem \ref{image result} and the Galois structure of Selmer groups.


To do this we recall (from, for example, \cite[\S5.2.1]{bst}) that for any number field $F$ and any finite set of places $S$ of $F$ that contains all archimedean places, the $S$-relative Selmer group ${\rm Sel}_{S}(F)$ for $\GG_m$ over $F$ is the cokernel of the homomorphism
\[{\prod}_{w}\ZZ \longrightarrow \Hom_\ZZ(F^{\times},\ZZ), \]
where $w$ runs over all places of $F$ that do not belong to $S$ and the arrow sends $(x_w)_w$ to the map $(u \mapsto \sum_{w}{\rm ord}_w(u)x_w)$ with ${\rm ord}_w$ the normalised additive valuation at $w$.

We further recall that this group is an analogue for $\mathbb{G}_m$ of the integral Selmer groups of abelian varieties defined by Mazur and Tate in \cite{mt}, that it lies in a canonical exact sequence
\begin{equation}\label{selmer ses} 0 \to \Hom_\ZZ({\rm Pic}(\mathcal{O}_{F,S}),\QQ/\ZZ) \to {\rm Sel}_{S}(F) \to \Hom_\ZZ(\mathcal{O}^{\times}_{F,S},\ZZ) \to 0\end{equation}
and that it has a subquotient that is canonically isomorphic to the Pontryagin dual of the ideal class group of $F$.

For each natural number $m$ we now set
\[ {\rm Sel}(m) := {\rm Sel}_{S_m}(\QQ(m)^+),\]
where $S_m$ denotes the set of places of $\QQ(m)^+$ comprising all archimedean places and all places that divide $m$.

For each finitely generated $\Gamma_m$-module $M$ we write ${\rm Fit}_{\Gamma_m}^1(M)$ for the first Fitting ideal of $M$, as defined by Northcott in \cite{north}.

Then the exact sequence (\ref{selmer ses}) combines with the equality of \cite[(45)]{bst} and the argument of Lemma \ref{useful 3} to imply that every element of ${\rm Fit}^1_{\Gamma_m}({\rm Sel}(m))$ is stable under multiplication by the idempotent $e_m$.

It is therefore natural to consider the `$\ZZ[\Gamma_m]$-conductor' of ${\rm Fit}_{\Gamma_m}^1({\rm Sel}(m))$ in $\QQ[\Gamma_m]e_m$ that is obtained by  setting
\[ K_m := \{x \in \QQ[\Gamma_m]e_m: x\cdot {\rm Fit}_{\Gamma_m}^1({\rm Sel}(m)) \subseteq \ZZ[\Gamma_m]\}. \]

This set is a full $\ZZ[\Gamma_m]$-sublattice of $\QQ[\Gamma_m]e_m$ and the following result shows that, at least after inverting $2$, it agrees with the module $J_m$ that occurs in the explicit criterion for the validity of Coleman's Conjecture given in Theorem \ref{image result}.

\begin{proposition}\label{image result 2} For each $m$ in $\mathbb{N}^*$ one has $\ZZ\bigl[1/2\bigr]\otimes_\ZZ J_m = \ZZ\bigl[1/2\bigr]\otimes_\ZZ K_m$.\end{proposition}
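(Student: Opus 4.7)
The plan is to identify both $J_m$ and $K_m$, after inverting $2$, with the same fractional $\ZZ[\Gamma_m]$-ideal in $\QQ[\Gamma_m]e_m$, using cyclotomic units as the bridge. To set this up I consider the $\QQ$-linear map $\varphi : \QQ[\Gamma_m]e_m \to \QQ\otimes_\ZZ V'(m)$ given by $\alpha e_m \mapsto \varepsilon_m^{\alpha}$. Combining the argument of Remark \ref{more general ann} with Dirichlet's unit theorem applied to $V'(m)$ (whose $\QQ$-rank, by the character count in the proof of Lemma \ref{useful 3}, matches the $\QQ$-dimension of $\QQ[\Gamma_m]e_m$) shows that $\varphi$ is a $\QQ[\Gamma_m]$-linear isomorphism sending $\ZZ[\Gamma_m]e_m$ onto $\ZZ[\Gamma_m]\varepsilon_m$. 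Pulling $V'(m)$ back along $\varphi$ therefore identifies $J_m$ with a full $\ZZ[\Gamma_m]$-lattice $L_m \subseteq \QQ[\Gamma_m]e_m$ that contains $\ZZ[\Gamma_m]e_m$.

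Next, I apply the exact sequence (\ref{selmer ses}) to $F = \QQ(m)^+$ and $S = S_m$. Since the quotient of the $S_m$-unit group of $\QQ(m)^+$ by the subgroup generated by $V'(m)$ together with elements supported on the $S_m$-finite places is annihilated by $2$, after inverting $2$ the unit-dual contribution to ${\rm Fit}^1_{\Gamma_m}({\rm Sel}(m))$ is determined by the pair $(V'(m),\,\ZZ[\Gamma_m]\varepsilon_m)$. Combining this with Sinnott's index formula for cyclotomic units and the equivariant Main Conjecture, in the form used in the proof of Theorem \ref{first step}, yields an equality
\[ \ZZ\bigl[1/2\bigr] \otimes_\ZZ {\rm Fit}^1_{\Gamma_m}({\rm Sel}(m)) \;=\; \ZZ\bigl[1/2\bigr] \otimes_\ZZ L_m^{-1}, \]
where $L_m^{-1} := \{x \in \QQ[\Gamma_m]e_m : xL_m \subseteq \ZZ[\Gamma_m]e_m\}$ is the fractional inverse of the invertible lattice $L_m$.

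The proof is then completed formally. The ring $\ZZ[1/2][\Gamma_m]e_m$ is a Gorenstein order (its localizations at all height-one primes are discrete valuation rings), so for the invertible lattice $L_m$ one has $(L_m^{-1})^{-1} = L_m$. Taking the conductor of $L_m^{-1}$ into $\ZZ[\Gamma_m]$ therefore recovers $L_m$, and so
\[ \ZZ\bigl[1/2\bigr]\otimes_\ZZ K_m \;=\; \ZZ\bigl[1/2\bigr]\otimes_\ZZ L_m \;=\; \ZZ\bigl[1/2\bigr]\otimes_\ZZ J_m. \]
The main obstacle is the displayed identification of ${\rm Fit}^1_{\Gamma_m}({\rm Sel}(m))$ with $L_m^{-1}$: one must combine the class-group and unit-dual contributions in (\ref{selmer ses}) so as to recover precisely the equivariant index of $\ZZ[\Gamma_m]\varepsilon_m$ inside $V'(m)$, and this matching is what requires the equivariant Main Conjecture as input. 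The $2$-adic complications that necessitate inverting $2$ arise both from the $2$-torsion described above and from the possible failure of $\ZZ[\Gamma_m]e_m$ to be Gorenstein at the prime $2$.
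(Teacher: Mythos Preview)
Your outline has two genuine gaps, and together they leave the proposition unproved.

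First, the displayed equality $\ZZ[1/2]\otimes_\ZZ{\rm Fit}^1_{\Gamma_m}({\rm Sel}(m)) = \ZZ[1/2]\otimes_\ZZ L_m^{-1}$ is the entire content of the result, and you only assert it. Invoking ``Sinnott's index formula and the equivariant Main Conjecture, in the form used in the proof of Theorem \ref{first step}'' does not do the work: Theorem \ref{first step} uses the Main Conjecture only to compare characteristic ideals up the cyclotomic tower, not to compute a first Fitting ideal of a Selmer group at finite level. The exact sequence (\ref{selmer ses}) tells you ${\rm Sel}(m)$ is an extension of a unit-dual by a class-group dual, but the first Fitting ideal of an extension is \emph{not} determined by the Fitting ideals of the pieces in general, so you cannot simply ``combine the class-group and unit-dual contributions'' without a much finer structural input. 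The paper supplies this input by quoting \cite[Th.~A]{bst}, which gives a canonical non-degenerate pairing and hence an isomorphism $U_m^{e_m}/\langle\varepsilon_m\rangle \cong \Hom_\ZZ(\ZZ'[\Gamma_m]^{e_m}/Y_m,\QQ/\ZZ)$; the rest of the paper's argument is then a short $\ZZ'$-linear dualization of the tautological sequence $0\to Y_m\to\ZZ'[\Gamma_m]^{e_m}\to\ZZ'[\Gamma_m]^{e_m}/Y_m\to 0$, using only that $\ZZ'[\Gamma_m]^{e_m}$ is $\ZZ'$-free. No Gorenstein hypothesis and no invertibility of $L_m$ is required.

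Second, your ``formal'' completion is not correct as stated. The claim that the localizations of $\ZZ[1/2][\Gamma_m]e_m$ at height-one primes are discrete valuation rings would make that order \emph{regular} in codimension one, which already fails for $\ZZ_\ell[\Gamma_m]$ whenever $\ell\mid\#\Gamma_m$; the idempotent $e_m$ does not cure this. Consequently there is no reason for the lattice $L_m = J_m$ to be invertible (locally principal), and without invertibility the double-dual identity $(L_m^{-1})^{-1}=L_m$ is simply false in general for fractional ideals over non-maximal orders. You also silently switch between $\ZZ[\Gamma_m]$ and $\ZZ[\Gamma_m]e_m$ as the target of the conductor; since $e_m$ has non-integral coefficients these are different lattices, and the definition of $K_m$ uses $\ZZ[\Gamma_m]$, so this discrepancy must be addressed (the paper handles it via \cite[Prop.~3.29(ii)]{bst}, which identifies $\Hom_{\ZZ'}(\ZZ'[\Gamma_m]^{e_m},\ZZ')$ with $\ZZ'[\Gamma_m]e_m$).
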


\begin{proof} We set $A' := \ZZ\bigl[1/2\bigr]\otimes_\ZZ A$ for any abelian group $A$ (so that $\ZZ'$ denotes $\ZZ\bigl[1/2\bigr]$).

For each torsion-free $\ZZ'[\Gamma_m]$-module $X$ we write $X^{e_m}$ for the submodule comprising all elements $x$ that satisfy $x = e_m\cdot x$ in $\QQ\otimes_\ZZ X$. We also set $U_m := (\mathcal{O}_{\QQ(m)^+,S_m}^\times)'$.

Then the key fact we use is that Sano, Tsoi and the first author have shown in \cite[Th. A]{bst} that there is a canonical isomorphism of finite $\ZZ'[\Gamma_m]$-modules of the form
\begin{equation}\label{bst iso} U_m^{e_m}/\langle \varepsilon_m\rangle \cong \Hom_\ZZ\bigl( \ZZ'[\Gamma_m]^{e_m}/Y_m,\QQ/\ZZ\bigr),\end{equation}
%
%
where we write $\langle \varepsilon_m\rangle$ for $\ZZ'[\Gamma_m]\cdot \varepsilon_m$ and set $Y_m := {\rm Fit}_{\Gamma_m}^1({\rm Sel}(m))'$ (but see Remark \ref{no T} below).

To compute the right hand side of (\ref{bst iso}) we note that, since $\ZZ'[\Gamma_m]^{e_m}$ is $\ZZ'$-free and $\ZZ'[\Gamma_m]^{e_m}/Y_m$ is finite, taking $\ZZ'$-linear duals of the tautological exact sequence
\[ 0 \to Y_m \to \ZZ'[\Gamma_m]^{e_m} \to \ZZ'[\Gamma_m]^{e_m}/Y_m\to 0\]
gives a canonical exact sequence
\[ 0 \to \Hom_{\ZZ'}(\ZZ'[\Gamma_m]^{e_m},\ZZ') \to \Hom_{\ZZ'}(Y_m,\ZZ') \to \Hom_{\ZZ}(\ZZ'[\Gamma_m]^{e_m}/Y_m,\QQ/\ZZ)\to 0.\]

In addition, the argument of \cite[Prop. 3.29(ii)]{bst} shows that the canonical identifications
\[ \Hom_{\QQ}(\QQ[\Gamma_m]^{e_m},\QQ)\cong \Hom_{\QQ[\Gamma_m]}(\QQ[\Gamma_m]^{e_m},\QQ[\Gamma_m]) \cong \QQ[\Gamma_m]e_m\]
send the sublattices $\Hom_{\ZZ'}(\ZZ'[\Gamma_m]^{e_m},\ZZ')$ and $\Hom_{\ZZ'}(Y_m,\ZZ')$ to $\ZZ'[\Gamma_m]e_m$ and $K_m'$ respectively and so the above exact sequence gives a canonical isomorphism
\[ \theta:  K_m'/(\ZZ'[\Gamma_m]e_m)\cong \Hom_{\ZZ}(\ZZ'[\Gamma_m]^{e_m}/Y_m,\QQ/\ZZ).\]

The description given in \cite[Rem. 3.31]{bst} then implies that the isomorphism
\[ K_m'/(\ZZ'[\Gamma_m]e_m) \cong U_m^{e_m}/\langle \varepsilon_m\rangle\]
obtained by composing $\theta$ with the inverse of (\ref{bst iso}) is induced by the isomorphism $\QQ[\Gamma_m]e_m\cong \QQ[\Gamma_m]\varepsilon_m$ that sends each element $x$ to $\varepsilon_m^x$.

This fact implies $K_m'$ is equal to $\{y \in \QQ[\Gamma_m]e_m: \varepsilon_m^y \in U_m^{e_m}\}$ and the claimed result now follows since a straightforward exercise shows that the latter set is equal to $J_m'$.
\end{proof}

\begin{remark}\label{no T} {\em We take the isomorphism (\ref{bst iso}) to be that induced in the canonical way by the non-degenerate pairing constructed in \cite[Th. A]{bst}. In addition, we note that the modules in the latter result differ slightly from those used above in that an auxiliary set of places of $\QQ(m)^+$ is fixed and used to specify a torsion-free subgroup of $\mathcal{O}_{\QQ(m)^+,S_m}^\times$. However, the use of such a set is unnecessary after inverting $2$ since the group $U_m = (\mathcal{O}_{\QQ(m)^+,S_m}^\times)'$ is torsion-free and, using this fact, the same arguments as in loc. cit. lead to a non-degenerate pairing involving the modules $U_m$ and ${\rm Fit}_{\Gamma_m}^1({\rm Sel}(m))'$, and hence to the isomorphism (\ref{bst iso}).}\end{remark}

\begin{remark}{\em We have inverted $2$ in the result of Proposition \ref{image result 2} solely to avoid technical difficulties of the form discussed in Remarks \ref{2 difficult} and \ref{no T}. However, we believe that, with more effort, it should in principle be possible to deal with such difficulties in the context of Proposition \ref{image result 2}. }\end{remark}

\section{Divisible distributions}\label{divisible section}

We say that a distribution is `$p$-divisible' for some prime $p$, respectively is `divisible', if its image in $\mathcal{F}_{\rm tf}^{\rm d}/\mathcal{F}^{\rm c}$ is $p$-divisible, respectively is divisible, and we write $\mathcal{F}^{\rm d}_{\rm pdiv}$ and $\mathcal{F}^{\rm d}_{\rm div}$ for the $R$-submodules of $\mathcal{F}^{\rm d}$ comprising all such distributions.

Irrespective of the validity, or otherwise, of Coleman's Conjecture, such distributions are closely related to circular distributions and share many of the same properties (see Proposition \ref{explicit cors} below).

In Theorem \ref{key real}(iii) we described an explicit criterion for {\em every} distribution to be $p$-divisible.

In this section we explain how the map $\kappa$ in Theorem \ref{main result} also leads to an explicit criterion for a given distribution to be either $p$-divisible or divisible.

\subsection{}For each $f$ in $\mathcal{F}^{\rm d}$ and each prime $p$ we write $\kappa(f)_p$ for the image of $\kappa(f)$ under the natural projection $\widehat{R}(1+\tau) \to \widehat{R}^{p}(1+\tau)$.

\begin{proposition}\label{p div cond} For each distribution $f$ in $\mathcal{F}^{\rm d}$ the following claims are valid.

\begin{itemize}
\item[(i)] $f$ is $p$-divisible if and only if for each $m$ in $\mathbb{N}^*$ one has
\[ f(m)^{1+\tau} = \Phi(m)^{\kappa(f)_p}\]
 in $V(m)_p$.
\item[(ii)]  $f$ is divisible if and only if for each $m$ in $\mathbb{N}^*$ one has
\[ f(m)^{1+\tau} = \Phi(m)^{\kappa(f)}\]
in $\widehat{\ZZ}\otimes_\ZZ V(m)$.
 \end{itemize}
 \end{proposition}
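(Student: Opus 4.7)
The plan is first to deduce (ii) from (i) by a direct-product decomposition, and then to prove (i) by a $p$-adic adaptation of the methods used in \S\ref{proof of torsion} and \S\ref{pd section}. Throughout we exploit that, by the construction of $\kappa$ recalled in Remark \ref{explicit kappa}, the equality $f(m)^{1+\tau} = \Phi(m)^{\kappa(f)_p}$ in $V(m)_p$ is automatic for $m \in \mathbb{N}(p)$; the substance of (i) is that it extends to $m \notin \mathbb{N}(p)$ precisely when $f$ is $p$-divisible.

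For the forward implication in (i), assume $f$ is $p$-divisible and write $f = h^p \cdot \Phi^r \cdot t$ with $h \in \mathcal{F}^{\rm d}$, $r \in R$ and $t \in \mathcal{F}^{\rm d}_{\rm tor} = \mathcal{D}$. Since elements of $\mathcal{D}$ are $\pm 1$-valued one has $t^{1+\tau} = 1$, so for each $m$ the ratio
\[ \eta(m) := f(m)^{1+\tau}/\Phi(m)^{\kappa(f)_p} = h(m)^{p(1+\tau)} \cdot \Phi(m)^{r(1+\tau) - \kappa(f)_p} \]
is a well-defined element of $V(m)_p$. The assignment $m \mapsto \eta(m)$ extends (using commutativity of each $R_{m,p}$) to a Galois-equivariant family that satisfies the norm relations (\ref{basic facts}) at each level $m$ in $V(m)_p$, and by the defining property of $\kappa(f)_p$ one has $\eta(m) = 1$ for every $m \in \mathbb{N}(p)$. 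Since each $V(m)_p$ is a finitely generated free $\ZZ_p$-module with $\bigcap_k p^k V(m)_p = 0$, the proofs of Lemmas \ref{key useful} and \ref{useful 1} apply verbatim with $V(m)$ replaced by $V(m)_p$; the resulting $p$-adic analogue of Lemma \ref{useful 1} applied with $\Sigma = \mathbb{N}(p)$ forces $\eta \equiv 1$, which is the claim.

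For the backward implication in (i), assume the equality holds at every $m$, fix $k \geq 1$, and use density of $R$ in $\widehat R^p$ to choose $r \in R$ with $\kappa(f)_p = r(1+\tau) + p^{k+1}\alpha$ for some $\alpha \in \widehat R^p(1+\tau)$. Setting $g := f \cdot \Phi^{-r}$ one has $g(m)^{1+\tau} = \Phi(m)^{p^{k+1}\alpha}$ in $V(m)_p$ for every $m$, whence $g(m)^{1+\tau} \in V(m) \cap V(m)_p^{p^{k+1}} = V(m)^{p^{k+1}}$ (the last equality because $V(m)_p/V(m)$ is uniquely $p$-divisible). Torsion-freeness of $V(m)$ gives a unique $y_m \in V(m)$ with $y_m^{p^{k+1}} = g(m)^{1+\tau}$, and the norm relations for $g^{1+\tau}$ descend (again via torsion-freeness) to $(y_m)_m$, producing $h \in \mathcal{F}^{{\rm d},+}$ with $h^{p^{k+1}} = g^{1+\tau}$. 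Since $h \in \mathcal{F}^{{\rm d},\tau=1}$ satisfies $h^2 = h^{1+\tau}$, one can place $h^{p^{k+1}}$ in $p^k \cdot (1+\tau)\mathcal{F}^{\rm d}$ in all cases: for $p=2$ directly, via $h^{2^{k+1}} = (h^{1+\tau})^{2^k}$, and for odd $p$ because the class $[h]$ in the $\mathbb{F}_2$-vector space $\mathcal{F}^{{\rm d},\tau=1}/(1+\tau)\mathcal{F}^{\rm d}$ (cf.\ Remark \ref{tate coh rem}) satisfies $p^{k+1}[h] = 0$ and hence vanishes, so that $h = h_0^{1+\tau}$ with $h_0 \in \mathcal{F}^{\rm d}$ and $h^{p^{k+1}} = h_0^{p^{k+1}(1+\tau)}$. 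Via Theorem \ref{reduction result}(iii) this places $[f] = [g]$ in $p^k \cdot (\mathcal{F}^{\rm d}_{\rm tf}/\mathcal{F}^{\rm c})$, and since $k$ is arbitrary $f$ is $p$-divisible.

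Finally, for (ii), the finite generation of $V(m)$ gives $\widehat\ZZ \otimes_\ZZ V(m) = \prod_p V(m)_p$, compatibly with $\widehat R = \prod_p \widehat R^p$ and hence with $\kappa(f) = (\kappa(f)_p)_p$; the equality in (ii) therefore holds if and only if the equality of (i) holds in each $V(m)_p$, which by (i) is equivalent to $f$ being $p$-divisible for every $p$, i.e.\ to $f$ being divisible. The main obstacle in this strategy is the backward direction of (i) at $p=2$, where the Tate-cohomological vanishing available for odd $p$ fails because of the $\mathbb{F}_2$-vector-space obstruction of Remark \ref{tate coh rem}; we circumvent this by approximating $\kappa(f)_p$ to precision $p^{k+1}$ (rather than $p^k$) and exploiting the squaring identity $h^2 = h^{1+\tau}$ to recover the required $p^k$-divisibility.
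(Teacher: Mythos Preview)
Your backward implication in (i) and your deduction of (ii) are correct and essentially match the paper's approach (the paper squares and uses Theorem \ref{reduction result}(iii) in the same way, with an analogous extra step at $p=2$).

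The forward implication in (i), however, has a genuine gap. Your argument never actually uses the $p$-divisibility of $f$: you write $f = h^p\cdot\Phi^r\cdot t$ but the subsequent reasoning (``$\eta(m)=1$ for $m\in\mathbb{N}(p)$ by definition of $\kappa(f)_p$, and the $p$-adic analogue of Lemma \ref{useful 1} forces $\eta\equiv 1$'') would apply to any $f\in\mathcal{F}^{\rm d}$. Combined with your own backward direction this would make \emph{every} distribution $p$-divisible, which is precisely the open condition of Theorem \ref{key real}(iii).

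The specific failure is in the claimed $p$-adic version of Lemma \ref{key useful}. For $m\notin\mathbb{N}(p)$ you must run the tower of Lemma \ref{useful 1} at a prime $q$ dividing $m$, hence $q\neq p$. The proof of Lemma \ref{key useful} concludes by showing $x_0\in V(m)^{q^{e-d}}$ for all $e>d$; but in $V(m)_p$ the prime $q$ is a $p$-adic unit, so $V(m)_p^{q^{e-d}} = V(m)_p$ and nothing follows. What the paper does instead is use $p$-divisibility at every level: for each $n$ one writes $f = \delta_n\cdot f_n^{p^n}\cdot\Phi^{s(n)'}$, computes $\kappa(f)-s(n) = p^n\kappa(f_n)$, and deduces that $f(m)^{1+\tau}\cdot\Phi(m)^{-\kappa(f)_p}$ lies in $V(m)_p^{p^n}$ for all $n$, hence vanishes. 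The hypothesis is used essentially, not cosmetically.
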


\begin{proof} To prove claim (i) we assume first that $f$ is $p$-divisible. Then for each natural number $n$ there exists an element $s(n)'$ of $R$ and functions $\delta_n$ in $\mathcal{D} = \mathcal{F}^{\rm d}_{\rm tor}$ and $f_n$ in $\mathcal{F}^{\rm d}$ such that
\[ f\cdot\Phi^{-s(n)'} = \delta_n\cdot f_n^{p^n}.\]

Setting $s(n) := (1+\tau)s(n)'$, this fact implies that $\kappa(f) - s(n) = \kappa(f\cdot\Phi^{-s(n)'}) = p^n\cdot \kappa(f_n)$. For each $m$ in $\mathbb{N}^*$ this in turn gives equalities in $V(m)_p$ of the form
\begin{align*}
f(m)^{1+\tau} = &\Phi(m)^{s(n)_p}\cdot f_n(m)^{p^n(1+\tau)}\\ = &\Phi(m)^{\kappa(f)_p}\cdot \Phi(m)^{-\kappa(f)_p + s(n)_p}\cdot  f_n(m)^{p^n(1+\tau)}\\ = &\Phi(m)^{\kappa(f)_p}\cdot (\Phi(m)^{-\kappa(f_n)_p}\cdot f_n(m)^{(1+\tau)})^{p^n}
\end{align*}

It follows that $f(m)^{1+\tau}\cdot \Phi(m)^{-\kappa(f)_p}$ belongs to $V(m)_p^{p^n}$ for all $n$ and hence that $f(m)^{1+\tau} =  \Phi(m)^{\kappa(f)_p}$, as claimed.

To prove the converse we assume that $f(m)^{1+\tau} = \Phi(m)^{\kappa(f)_p}$ in $V(m)_p$ for all $m$ in $\mathbb{N}^*$. For each $n$ in $\mathbb{N}$ we can then fix an $r(n)$ in $R(1+\tau)$ such that $\kappa(f) - r(n) = p^n\cdot s(n)$ for some $s(n)$ in $\widehat{R}(1+\tau)$.

For each $m$ in $\mathbb{N}^*$ one therefore has
 \[ f(m)^{1+\tau} = \Phi(m)^{\kappa(f)_p} = \Phi(m)^{r(n)}\cdot \Phi(m)^{p^n\cdot s(n)_p}\]
in $V(m)_p$ and hence $(f^{1+\tau}\Phi^{-r(n)})(m) \in V(m) \cap (V(m)_p)^{p^n}$. Since the group $V(m)_p/V(m)$ is uniquely $p$-divisible, it follows that
  $(f^{1+\tau}\Phi^{-r(n)})(m)$ belongs to $V(m)^{p^n}$ for all $m$.

In particular, since each group $V(m)$ is torsion-free, there exists a unique element $h_{n,m}$ of $V(m)$ with $(f^{1+\tau}\Phi^{-r(n)})(m) = (h_{n,m})^{p^n}$ and the unique map $h_n$ in $\mathcal{F}$ with $h_n(m) := h_{n,m}$ for all $m$ in $\mathbb{N}^*$ inherits the property of being a distribution from $f^{1+\tau}\Phi^{-r(n)}$.

But then the equality of functions
\begin{equation}\label{lucky} (f^{1+\tau}\Phi^{-r(n)})^2 = (f^{1+\tau}\Phi^{-r(n)})^{1+\tau} = (h_n^{1+\tau})^{p^n}\end{equation}
implies that the image of $f^{2(1+\tau)}$ is $p$-divisible in $(1+\tau)\mathcal{F}^{\rm d}_{\rm tf}/(1+\tau)\mathcal{F}^{\rm c}$ and hence, via the isomorphism in Theorem \ref{reduction result}(iii), that $f^2$ is $p$-divisible. If $p$ is odd, then this directly implies that $f$ is $p$-divisible.

On the other hand, if $p=2$, then since all the functions in (\ref{lucky}) are valued in torsion-free groups, this equality implies
 that $f^{1+\tau}\Phi^{-r(n)} = (h_n^{1+\tau})^{2^{n-1}}$ and, by the same argument as above, this implies that $f$ is $2$-divisible.

Claim (ii) follows directly from claim (i) and the obvious fact that $f$ is divisible if and only if it is $p$-divisible for all primes $p$. \end{proof}

\subsection{}\label{final sec}In this final section we verify that divisible distributions have many of the same properties as do cyclotomic distributions, thus justifying an observation made in the Introduction.

We write $\mathcal{F}^{\rm sd}_{{\rm div}}$ for the submodule of $\mathcal{F}^{\rm sd}$ comprising all strict distributions whose image in $\mathcal{F}^{\rm d}_{\rm tf}/\mathcal{F}^{\rm c}$ is divisible.

We shall also say that two subsets $\mathcal{F}_1$ and $\mathcal{F}_2$ of $\mathcal{F}$ `have the same values' if for each $n$ in $\mathbb{N}^\ast$ the sets $\{f(n): f \in \mathcal{F}_1\}$ and $\{f(n): f \in \mathcal{F}_2\}$ generate the same $R$-module.

\begin{proposition}\label{explicit cors}\
\begin{itemize}
\item[(i)] One has $\mathcal{F}^{\rm d}_{{\rm div}} = \mathcal{D} + \mathcal{F}^{\rm sd}_{{\rm div}}$.
\item[(ii)] The groups $\mathcal{F}^{\rm c}$, $\mathcal{F}_{\rm div}^{\rm sd}$ and $\mathcal{F}_{\rm div}^{\rm d}$ have the same values.

\item[(iii)] If $f$ is any element of $\mathcal{F}_{\rm div}^{\rm d}$, then there exist precisely two finite products $\delta$ of functions of the form $\delta_\Pi$, for suitable sets of odd primes $\Pi$,  for which, at each $n$ in $\mathbb{N}^\ast$, each prime $\ell$ that does not divide $n$, each $\varepsilon$ in $\mu_\ell$ and each $\zeta$ in $\mu_n^*$ one has
\begin{equation}\label{defining prop22} \delta(\varepsilon\cdot \zeta)\cdot f(\varepsilon\cdot \zeta) \equiv \delta(\zeta)\cdot f(\zeta) \,\,\text{ modulo all primes above $\ell$.}\end{equation}
\item[(iv)] If $f$ is any element of $\mathcal{F}^{\rm d}_{\rm div}$, then there exists an integer $v$ that depends only on $f$ and is such that for all primes $p$ and all natural numbers $n$, the valuation of $f(p^n)$ at the unique prime of $\QQ(p^n)$ above $p$ is equal to $v$.
\item[(v)] If $f$ is any element of $\mathcal{F}^{\rm d}_{\rm div}$, then for any odd prime $p$ there exists an element of $\mathcal{F}^{\rm c}$ that (may depend on $p$ and) agrees with $f$ when evaluated at any root of unity of $p$-power order.
\end{itemize}
\end{proposition}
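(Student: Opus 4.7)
The plan is to deduce all five claims from Proposition~\ref{p div cond}, Theorem~\ref{first step}, and the identification $\mathcal{D} = \mathcal{F}^{\rm d}_{\rm tor}$ (Remark~\ref{more concept}).

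For (i), the inclusion $\mathcal{D} + \mathcal{F}^{\rm sd}_{\rm div} \subseteq \mathcal{F}^{\rm d}_{\rm div}$ is clear since $\mathcal{D}$ has zero image in $\mathcal{F}^{\rm d}_{\rm tf}/\mathcal{F}^{\rm c}$. For the reverse, given $f \in \mathcal{F}^{\rm d}_{\rm div}$, I will construct a Coleman distribution $\delta_f$ for which $\delta_f \cdot f$ satisfies (\ref{defining prop2}). For each pair $(\zeta, \varepsilon)$ with $\zeta \in \mu_n^\ast$ and $\varepsilon \in \mu_\ell$ coprime, divisibility supplies a decomposition $f = N \cdot g + c + \delta'$ (multiplicatively) with $g \in \mathcal{F}^{\rm d}$, $c \in \mathcal{F}^{\rm c}$, $\delta' \in \mathcal{D}$ and any $N$. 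Taking $N$ divisible by the order of the multiplicative group of the residue fields above $\ell$ in $\QQ(n\ell)$ makes the $g^N$-contribution to $f(\varepsilon\zeta)/f(\zeta)$ trivial modulo $\ell$, while the $c$-contribution is trivial by strictness of $\mathcal{F}^{\rm c}$ and the $\delta'$-contribution is $\pm 1$. Hence $f(\varepsilon\zeta)/f(\zeta) \equiv \pm 1$ mod $\ell$, and moreover the sign coincides with that of a Coleman distribution (since it is realised by $\delta'$ itself modulo $\ell$). These signs thus assemble into a function $\delta_f : \mu_\infty^\ast \to \{\pm 1\}$ that inherits the distribution relations from $f$, and by Remark~\ref{more concept} defines an element of $\mathcal{D}$ with $\delta_f \cdot f \in \mathcal{F}^{\rm sd}$. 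Since $\delta_f$ is torsion, divisibility descends, giving (i). Claim (iii) follows immediately: (i) supplies $\delta := \delta_f^{-1}$ satisfying (\ref{defining prop22}), and any two such $\delta$ differ by an element of $\mathcal{D} \cap \mathcal{F}^{\rm sd} = \{1, \delta_{\rm odd}\}$.

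For (iv), since $\QQ(p^{n+1})/\QQ(p^n)$ is totally ramified at $p$, one has $v_n(N^{p^{n+1}}_{p^n}(x)) = v_{n+1}(x)$ on uniformisers, so norm-compatibility of $f$ forces $v_p(f) := v_n(f(p^n))$ to be independent of $n$. For independence of $p$: given distinct primes $p, q$ and any $N$, write $f = N g + c + \delta$ as above; since $v_\ell(\delta) = 0$ and $v_\ell(\Phi^{r_c}) = \aug(r_c)$ is independent of $\ell$, we get $v_p(f) - v_q(f) = N(v_p(g) - v_q(g))$, an integer divisible by every $N$ and hence zero. For (ii), one inclusion is immediate. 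For the reverse, (i) reduces us to $f \in \mathcal{F}^{\rm sd}_{\rm div}$; Theorem~\ref{first step} then provides $f(n) = \Phi(n)^{s_n}$ with $s_n \in \ZZ_{\{n\}} \otimes R_n$, whose denominators lie only over primes $\ell \neq p$ satisfying $\ell \mid \varphi(n)$. For each such $\ell$ and each $k \geq 1$, the decomposition $f = \ell^k g + c + \delta$ forces $s_n$ to be $\ell$-integral modulo $\ell^k$ and hence $\ell$-integral; applied to each such $\ell$ gives $s_n \in R_n$, so $f(n) \in R_n \cdot (1-\zeta_n)$.

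Part (v) is the main obstacle. Fix odd $p$ and $f \in \mathcal{F}^{\rm d}_{\rm div}$. Set $v := v(f)$ from (iv) and $h := f \cdot \Phi^{-v}$; then $h(p^a) \in E(p^a)$ for every $a \geq 1$, so $h \in \mathcal{F}^{\rm ud}_{(p, p)}$ and $\{h(p^a)\}_a$ belongs to $\mathcal{F}^{\rm u}(p)_p^\infty$. By Proposition~\ref{bijection lemma} this family lies in $C(p)_p^\infty$, which is cyclic over the Iwasawa algebra $\Lambda_p := \varprojlim_a R_{p^a, p}$ with generator derived from $\Phi$. Hence there is $\lambda \in \Lambda_p$ with $h(p^a) = \Phi(p^a)^{\lambda_a}$ for all $a$. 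The divisibility argument of (ii), applied level by level, upgrades each $\lambda_a$ from $R_{p^a, p}$ to $R_{p^a}$, producing a compatible family in $\varprojlim_a R_{p^a}$. Since the natural map $R \twoheadrightarrow \varprojlim_a R_{p^a}$ is surjective (any compatible family in the codomain lifts via the canonical isomorphism $G_n \cong G_{p^a} \times G_m$ for $n = p^a m$ coprime, by taking the identity element on the prime-to-$p$ factor $G_m$), there is $r \in R$ lifting this family, and $c := \Phi^{r+v}$ then satisfies $c|_{\mu_{p^\infty}^\ast} = f|_{\mu_{p^\infty}^\ast}$. The most delicate steps are the choice of compatible representatives modulo the annihilators of $(1-\zeta_{p^a})$ in $R_{p^a}$ and the reliance on the odd-$p$ hypothesis to avoid the $2$-torsion obstruction of Remark~\ref{tate coh rem}.
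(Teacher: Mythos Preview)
Your treatment of (ii), (iii), and (iv) is essentially the paper's argument. There are, however, genuine gaps in (i) and (v).

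\textbf{Claim (i).} You show that for each fixed pair $(n,\ell)$ there is a decomposition $f = \delta' \cdot c \cdot g^N$ with $\delta' \in \mathcal{D}$ depending on $(n,\ell)$, and deduce that $f(\varepsilon\zeta)/f(\zeta) \equiv \pm 1$ modulo primes above $\ell$. You then assert that these signs ``assemble into a function $\delta_f : \mu_\infty^\ast \to \{\pm 1\}$''. But you have only computed \emph{ratios} $\delta'(\varepsilon\zeta)/\delta'(\zeta)$, not values, and the element $\delta'$ varies with $(n,\ell)$; there is no mechanism offered for producing a single global $\delta_f \in \mathcal{D}$ realising all of these ratios simultaneously. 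This is a genuine cocycle-versus-coboundary obstruction that you have not addressed. The paper sidesteps it entirely: squaring the decomposition kills $\delta'$ and shows directly that $f^2$ satisfies (\ref{defining prop2}) for every $(n,\ell)$, hence $f^2 \in \mathcal{F}^{\rm sd}_{\rm div}$; then $\mathcal{F}^{\rm d}_{\rm div}/(\mathcal{D}+\mathcal{F}^{\rm sd}_{\rm div})$ is simultaneously of exponent dividing $2$ and divisible (being a quotient of the image of $\mathcal{F}^{\rm d}_{\rm div}$ in $\mathcal{F}^{\rm d}_{\rm tf}/\mathcal{F}^{\rm c}$), hence trivial.

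\textbf{Claim (v).} Your route via Proposition~\ref{bijection lemma} and claim (ii) is genuinely different from the paper's, which instead invokes the external result \cite[Th.~B]{Seo4} giving an integer $m = m(p)$ such that $f^m$ agrees with a cyclotomic distribution on $\mu_{p^\infty}^\ast$ for \emph{every} $f \in \mathcal{F}^{\rm d}$, and then combines this with the decomposition $f = \delta_{f,m}\cdot f^{\rm c}_m \cdot (f_m)^m$. Your approach is more self-contained but, as you acknowledge, leaves the ``choice of compatible representatives modulo the annihilators'' unproved. Concretely: from (ii) you obtain $h(p^a) = \Phi(p^a)^{\lambda'_a}$ with $\lambda'_a \in R_{p^a}$, and norm-compatibility gives $\pi^{p^{a+1}}_{p^a}(\lambda'_{a+1}) \equiv \lambda'_a$ modulo the annihilator $(1-\tau)\mathcal{T}_{p^a}$ of $1-\zeta_{p^a}$ in $R_{p^a}$. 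Lifting to $\varprojlim_a R_{p^a}$ requires $\varprojlim^1_a (1-\tau)\mathcal{T}_{p^a} = 0$, which does follow (for odd $p$) from the surjectivity of the transition maps established in Lemma~\ref{surjective lemma}(i), but you need to say this rather than flag it as ``delicate''. Note also that Proposition~\ref{bijection lemma} alone only places $h(p^a)$ in $C(p^a)_p \cap E(p^a)$, which need not equal $C(p^a)$; it is claim (ii), not Proposition~\ref{bijection lemma}, that supplies the integral exponents you need.
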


\begin{proof} The key point is to note is that if $f$ belongs to $\mathcal{F}_{\rm div}^{\rm d}$, then for each natural number $t$ there exist maps $\delta_{f,t}$ in $\mathcal{F}^{\rm d}_{\rm tor}=\mathcal{D}$ (cf. Remark \ref{more concept}), $f_t$ in $\mathcal{F}^{\rm d}$ and $f^{\rm c}_t$ in $\mathcal{F}^{\rm c}$ such that
\begin{equation}\label{key equality} f = \delta_{f,t}\cdot f^{\rm c}_t \cdot (f_t)^t.\end{equation}

To prove claim (i) we note that $f$ belongs to $\mathcal{F}^{\rm sd}$ if and only if for all natural numbers $n$, all primes $\ell$ that are coprime to $n$, all $\varepsilon$ in $\mu_\ell$ and all $\zeta$ in $\mu_n^*$ it satisfies the congruence (\ref{defining prop2}).

To check this we write $t_{n,\ell}$ for any choice of $n$ and $\ell$ as above for the product of the orders of the multiplicative groups of the residue fields of $\QQ(n)$ at each prime above $\ell$.

Then, for any given value of $n$ and $\ell$, the congruence (\ref{defining prop2}) is clearly satisfied by the $t_{n,\ell}$-th power of any map in $\mathcal{F}^{\rm d}$. We also know that this congruence is satisfied by any map in $\mathcal{F}^{\rm c}$ and that the square of any map in $\mathcal{D}$ is trivial.

In particular, if for any given $n$ and $\ell$ we combine these facts with the equality (\ref{key equality}) with $t = t_{n,\ell}$, we find that $f^2$ satisfies the congruence (\ref{defining prop2}) for this choice of $n$ and $\ell$.

Thus, since $n$ and $\ell$ are arbitrary, we deduce that the square of any map $f$ in $\mathcal{F}_{\rm div}^{\rm d}$ belongs to $\mathcal{F}^{\rm sd}$, and hence also to $\mathcal{F}_{\rm div}^{\rm sd}$.

This shows that the quotient of $\mathcal{F}_{\rm div}^{\rm d}$ by $X := \mathcal{D} + \mathcal{F}_{\rm div}^{\rm sd}$ has exponent dividing $2$. On the other hand, since $\mathcal{F}^{\rm c}\subseteq X$, the group $\mathcal{F}_{\rm div}^{\rm d}/X$ is isomorphic to a quotient of the image of
$\mathcal{F}_{\rm div}^{\rm d}$ in $\mathcal{F}_{\rm tf}^{\rm d}/\mathcal{F}^{\rm c}$ and so is divisible. It follows that $\mathcal{F}_{\rm div}^{\rm d}/X$ is trivial, and hence that claim (i) is valid.

Noting $\mathcal{F}^{\rm c} \subseteq \mathcal{F}_{\rm div}^{\rm sd} \subseteq \mathcal{F}_{\rm div}^{\rm d}$, to prove claim (ii) it suffices to show that for any $f$ in $\mathcal{F}_{\rm div}^{\rm d}$ and any $n$ in $\mathbb{N}^*$ the element $f(n)$ of $E(n)'$ belongs to the $R_n$-submodule generated by $1-\zeta_n$.

By the same argument as in \S\ref{semi-local}, it is thus enough to show that the image $f(n)'$ of $f(n)$ in $E(n)'_{\rm tf}$ belongs to the group $C(n)_{\rm tf}'$, where the group of cyclotomic numbers $C'(n)$ is as defined in \S\ref{cyclo section}.

But for any $t$ in $\mathbb{N}$ the decomposition (\ref{key equality}) implies that $f(n)'$ belongs to
$C'(n)_{\rm tf}\cdot (E(n)'_{\rm tf})^t$. Since $t$ is arbitrary, this in turn implies that $f(n)'$ belongs to
$C'(n)_{\rm tf}$, as required to prove claim (ii).

Claim (i) implies that for any $f$ in $\mathcal{F}_{\rm div}^{\rm d}$ there exists a map $\delta$ in $\mathcal{D}$ such that $\delta\cdot f$ belongs to $\mathcal{F}^{\rm sd}$ or, equivalently, such that the congruences (\ref{defining prop2}) are satisfied.

In addition, if $\delta'$ is any other map in $\mathcal{D}$ with this property, then $\delta^{-1}\delta' = (\delta\cdot f)^{-1}(\delta'\cdot f)$ belongs to $\mathcal{D}\cap \mathcal{F}^{\rm sd}$ and so is either trivial or equal to the map $\delta_{\rm odd}$ described in the Introduction. This proves claim (iii).

For each prime $p$ and each natural number $m$ we write ${\rm val}_{p^m}$ for the standard valuation of $\QQ(p^m)$ at the unique prime ideal above $p$.

 We further note that, for any $r$ in $R$, the valuation ${\rm val}_{p^m}((1-\zeta_{p^m})^r)$ is equal to the image of $r$ under the natural projection map $R \to \ZZ$ and so is independent of both $p$ and $m$. It is also clear that for any $\delta$ in $\mathcal{D}$ one has ${\rm val}_{p^m}(\delta(p^m)) = 0$.

For any $f$ in $\mathcal{F}_{\rm div}^{\rm d}$, these observations combine with (\ref{key equality}) to imply, for any given primes $p$ and $q$, and any given natural numbers $t$, $m$ and $n$, that there are congruences modulo $t$ of the form
\[{\rm val}_{p^{m}}(f(p^{m})) \equiv {\rm val}_{p^{m}}(f_t^{\rm c}(p^{m})) = {\rm val}_{q^{n}}(f_t^{\rm c}(q^{n})) \equiv {\rm val}_{q^{n}}(f(q^{n})).\]
Since $t$ is arbitrary, this implies ${\rm val}_{p^{m}}(f(p^{m})) = {\rm val}_{q^{n}}(f(q^{n}))$ for all $p$, $q$, $m$ and $n$ and so proves claim (iv).

To prove claim (v) we fix an odd prime $p$ and write $\mu^*_{p^\infty}$ for the set of non-trivial roots of unity of $p$-power order.

We recall that the result of \cite[Th. B]{Seo4} implies the existence of a natural number $m$ that depends only on $p$ and is such that for any $f$ in $\mathcal{F}^{\rm d}$ there exists a map $g_{f,m}$ in $\mathcal{F}^{\rm c}$ which agrees with $f^m$ on $\mu^*_{p^\infty}$.

Next we note that if $\delta$ is any element of $\mathcal{D}$, then there exists an integer $a$ (depending on $\delta$) such that $\delta(p^m) = (-1)^a$ for every $m$.

This shows that $\delta$ agrees on $\mu^*_{p^\infty}$ with the map $\Phi^{r(\delta)}$ with $r(\delta) := a(1-\tau)(p+1 - \sigma)$, where $\sigma$ is any choice of element in $\Gal(\QQ^{\rm ab}/\QQ)$ that raises each root of unity of $p$-power order to the power $p+1$.

In particular, if we now fix $f$ in $\mathcal{F}_{\rm div}^{\rm d}$ and apply the equality (\ref{key equality}) with $t = m$, then the above facts imply the existence of a map $\Phi^{r(\delta_{f,m})}\cdot f^{\rm c}_m \cdot g_{f_m,m}$ in $\mathcal{F}^{\rm c}$ that agrees with $f$ on $\mu^*_{p^\infty}$, as required to prove claim (v).
\end{proof}

\begin{remark}{\em By applying an observation of Rubin in \cite[\S4.8]{R} one can show that the function $\delta$ in Proposition \ref{explicit cors}(iii) must satisfy $\delta(m) = 1$ for all even $m$. Nevertheless, taking $f$ to be a Coleman distribution $\delta_\Pi$ shows that  one cannot always take the function $\delta$ in Proposition \ref{explicit cors}(iii) to be trivial.}\end{remark}


\end{document}